\def\ge{\geqslant}
\def\le{\leqslant}
\def\a{\alpha}
\def\b{\beta}
\def\g{\gamma}
\def\G{\Gamma}
\def\D{\Delta}
\def\e{\epsilon}
\def\s{\sigma}
\def\k{\kappa}
\def\l{\lambda}
\def\z{\zeta}
\def\i{^{-1}}
\def\<{\langle}
\def\>{\rangle}
\newcommand{\ba}{\mathbf a}
\newcommand{\bG}{\mathbf G}
\newcommand{{\BG}}{\ensuremath{\mathbb {G}}\xspace}
\newcommand{{\BK}}{\ensuremath{\mathbb {K}}\xspace}
\newcommand{\BQ}{\ensuremath{\mathbb {Q}}\xspace}
\newcommand{\BR}{\ensuremath{\mathbb {R}}\xspace}
\newcommand{\BS}{\ensuremath{\mathbb {S}}\xspace}
\newcommand{\BZ}{\ensuremath{\mathbb {Z}}\xspace}
\newcommand{\CA}{\ensuremath{\mathcal {A}}\xspace}
\newcommand{\CC}{\ensuremath{\mathcal {C}}\xspace}
\newcommand{\CE}{\ensuremath{\mathcal {E}}\xspace}
\newcommand{\CI}{\ensuremath{\mathcal {I}}\xspace}
\newcommand{\CM}{\ensuremath{\mathcal {M}}\xspace}
\newcommand{\CN}{\ensuremath{\mathcal {N}}\xspace}
\newcommand{\CO}{\ensuremath{\mathcal {O}}\xspace}
\newcommand{\CS}{\ensuremath{\mathcal {S}}\xspace}
\newcommand{\wt}{\text{wt}}
\DeclareMathOperator{\Adm}{Adm}
\DeclareMathOperator{\rank}{rank}
\newcommand{\wtd}{\widetilde}
\def\tS{\tilde \BS}
\newtheorem*{thrm1}{Theorem A}
\newtheorem*{thrm2}{Theorem B}
\newtheorem*{thrm3}{Theorem C}
\newtheorem{theorem}{Theorem}
\newtheorem{proposition}[theorem]{Proposition}
\newtheorem{lemma}[theorem]{Lemma}
\newtheorem{corollary}[theorem]{Corollary}
\theoremstyle{definition}
\newtheorem*{acknowledgement}{Acknowledgement}
\newtheorem{definition}[theorem]{Definition}
\newtheorem{remark}[theorem]{Remark}
\numberwithin{equation}{section}
\numberwithin{theorem}{section}
\renewcommand{\to}{%
   \ifbool{@display}{\longrightarrow}{\rightarrow}%
   }
\let\shortmapsto\mapsto
\renewcommand{\mapsto}{%
   \ifbool{@display}{\longmapsto}{\shortmapsto}%
   }
\newlength{\olen}
\newlength{\ulen}
\newlength{\xlen}
\newcommand{\xra}[2][]{%
   \ifbool{@display}%
      {\settowidth{\olen}{$\overset{#2}{\longrightarrow}$}%
       \settowidth{\ulen}{$\underset{#1}{\longrightarrow}$}%
       \settowidth{\xlen}{$\xrightarrow[#1]{#2}$}%
       \ifdimgreater{\olen}{\xlen}%
          {\underset{#1}{\overset{#2}{\longrightarrow}}}%
          {\ifdimgreater{\ulen}{\xlen}%
             {\underset{#1}{\overset{#2}{\longrightarrow}}}
             {\xrightarrow[#1]{#2}}}}%
      {\xrightarrow[#1]{#2}}
   }
\newcommand{\xyra}[2][]{%
   \settowidth{\xlen}{$\xrightarrow[#1]{#2}$}%
   \ifbool{@display}%
      {\settowidth{\olen}{$\overset{#2}{\longrightarrow}$}%
       \settowidth{\ulen}{$\underset{#1}{\longrightarrow}$}%
       \ifdimgreater{\olen}{\xlen}%
          {\mathrel{\xymatrix@M=.12ex@C=3.2ex{\ar[r]^-{#2}_-{#1} &}}}%
          {\ifdimgreater{\ulen}{\xlen}%
             {\mathrel{\xymatrix@M=.12ex@C=3.2ex{\ar[r]^-{#2}_-{#1} &}}}
             {\mathrel{\xymatrix@M=.12ex@C=\the\xlen{\ar[r]^-{#2}_-{#1} &}}}}}%
      {\mathrel{\xymatrix@M=.12ex@C=\the\xlen{\ar[r]^-{#2}_-{#1} &}}}%
   }
\newcommand{\xla}[2][]{%
   \ifbool{@display}%
      {\settowidth{\olen}{$\overset{#2}{\longleftarrow}$}%
       \settowidth{\ulen}{$\underset{#1}{\longleftarrow}$}%
       \settowidth{\xlen}{$\xleftarrow[#1]{#2}$}%
       \ifdimgreater{\olen}{\xlen}%
          {\underset{#1}{\overset{#2}{\longleftarrow}}}%
          {\ifdimgreater{\ulen}{\xlen}%
             {\underset{#1}{\overset{#2}{\longleftarrow}}}
             {\xleftarrow[#1]{#2}}}}%
      {\xleftarrow[#1]{#2}}
   }
\newcommand{\isoarrow}{%
   \ifbool{@display}{\overset{\sim}{\longrightarrow}}{\xrightarrow\sim}%
   }
\begin{document}

\title[Affine Deligne-Lusztig Varieties and Quantum Bruhat Graph]{Affine Deligne-Lusztig Varieties and Quantum Bruhat Graph}

\author{Arghya Sadhukhan}
\address[A. S.]{Department of Mathematics, University of Maryland, College Park, MD 20742}

\keywords{Affine Deligne-Lusztig variety, generic Newton point, affine Weyl group, dimension formula}
\email{arghyas0@math.umd.edu}
\subjclass[2010]{20G25,11G25,20F55}
\thanks{}

\date{\today}

\begin{abstract}
In this paper, we consider affine Deligne-Lusztig varieties $X_w(b)$ and their certain union $X(\mu,b)$ inside the affine flag variety of a reductive group. Several important results in the study of affine Deligne-Lusztig varieties have been established under the so-called superregularity hypothesis. Such results include a description of generic Newton points in Iwahori double cosets of loop groups, covering relation in associated Iwahori-Weyl group and dimension formula for $X(\mu,b)$. We show that one can considerably weaken the superregularity hypothesis and sometimes completely eliminate it, thus strengthening these existing results.
\end{abstract}
\maketitle
\section{Introduction}
\subsection{Motivation}
Let $F$ be a nonarchimedean local field and $\breve F$ be the completion of its maximal unramified extension. Let $\s$ be the Frobenius automorphism of $\breve F$ over $F$. Let $\bG$ be a split connected reductive group over $F$. Let $\breve \CI$ be the standard Iwahori subgroup of $\bG(\breve F)$. Let $w$ be an element in the Iwahori-Weyl group $\wtd W$ and $[b]$ be a $\s$-conjugacy class in $G(\breve F)$. The affine Deligne-Lusztig variety associated to this pair $(w,[b])$ is defined to be a locally closed subscheme of the affine flag variety given by
\begin{gather*}
X_w(b)=\{g \breve \CI \in \bG(\breve F)/\breve \CI; g \i b \s(g) \in \breve \CI \dot w \breve \CI\}.
\end{gather*}

Let $\mu$ be a conjugacy class of coweights of $\breve G$ over the algebraic closure $\bar{F}$ and let $\Adm(\mu)$ be the set of $\mu$-admissible elements of $\wtd W$; see \cref{sec:ADLV} for more details. Then we define
\[X(\mu,b)=\coprod\limits_{w \in \Adm(\mu)} X_w(b).\] This is a closed subscheme of the affine flag variety and it serves as the group-theoretic model for the Newton stratum corresponding to $[b]$ in the special fiber of a Shimura variety giving rise to the
datum $(\bG,\mu)$.

First introduced by Rapoport in \cite{Rapoport05}, the notion of affine Deligne-Lusztig varieties plays an important role in arithmetic geometry and the Langlands program. A fundamental problem in the study of $X_w(b)$ is the non-emptiness pattern, i.e. for which choice of $(w,[b])$, $X_w(b)$ is nonempty.  Note that $X_w(b) \neq \emptyset$ if and only if $\CN_{b,w}:=[b]\cap \breve \CI w \breve \CI$ is nonempty; the latter is the set of geometric points of a locally closed subscheme of $\breve \CI w \breve \CI$; namely, the Newton stratum associated to $[b]$.

Let $B(G)$ be the set of $\s$-conjugacy classes of $\bG(\breve F)$. The set $B(G)$ is equipped with a
natural partial order defined in terms of the Kottwitz map $\k$ and the Newton map $\nu$. Understanding the non-emptiness pattern of affine Deligne Lusztig varieties associated to $w \in \wtd W$ naturally lead us to investigate the sub-poset $B(G)_w = \{[b]: X_w(b)\neq \emptyset\}$. This set contains a unique maximal element, which we denote by $[b_w]$. Recent work by Mili\'cevi\'c and Viehmann in \cite{VM20} show that whenever the dimension of $X_w(b_w)$ agrees with its virtual dimension in the sense of \cite{He14}, the associated Newton strata inside $\breve \CI w \breve \CI$ exhibit well-behaved geometry. Furthermore, their theorem gives a condition that can be checked from knowledge of the Newton point $\nu([b_w])$ for this maximal element of the $B(G)_w$, but it provides important conclusion about the shape of the entire poset. In light of these results, it becomes important to understand the maximal Newton point $\nu([b_w])$ for elements $w \in \wtd W$.

An explicit description of $\nu([b_w])$ is given by Mili\'cevi\'c in \cite[Theorem 3.2]{Milicevic21} for elements $w \in \wtd W$ that are \emph{suitably far from walls of any Weyl chamber}. To quantify the later condition, let us define for any dominant coweight $\l$ its depth as \[\text{depth}(\l) = \min \{\<\a,\l\>:\a~\text{is a simple root}\}.\]
This quantity estimates how far $\l$ is from the walls of any Weyl chamber. Then a formula for $\nu([b_w])$ obtained in loc. sit. under `superregularity' hypothesis on the dominant translation part $\l$ of $w$, which roughly says that the depth of $\l$ is \emph{quadratically large} with respect to the semisimple rank of $\bG$. To establish this result, Mili\'cevi\'c first derives a characterization of the covering relation in $\wtd W$, again under certain superregularity hypothesis. 

This characterization is of independent interest, and more recently it has been utilized by He and Yu in \cite{HeYu20} to deduce a partial description of admissible sets in $\wtd W$. This latter description is a crucial ingredient in proving the main result in their paper, which provides a formula for the dimension of $X(\mu,b)$. However, due to this reliance on the result about covering relation, such dimension formula is established in loc. sit. under the hypothesis that $\mu$ is superregular.

\subsection{Main results}
The main purpose of this article is to show that the aforementioned results in the study of affine Deligne-Lusztig varieties can be improved by weakening the hypothesis required to prove them.
\subsubsection{} An uniform bound is given in \cite[Corollary 3.3]{Milicevic21} in the quasi-simple case, which says that the description of $\nu([b_w])$ provided in the article is valid whenever the following depth hypothesis is satisfied on $\l$:
\begin{equation*}
depth(\lambda) \geq 
\begin{cases}
8 \ell(w_0), & \text{if $\bG~ \text{is of classical type}$;}\\
16 \ell(w_0), & \text{if $\bG~ \text{is of exceptional type}$.}\\
\end{cases}
\end{equation*}
Here $\ell(w_0)$ is the length of the longest element $w_0$ in the associated finite Weyl group $W$. However, it is noted in loc. sit. that this restriction on the lower bound arises merely from the proof method and is rather superficial, in the sense that depending on the element $w$ one can often get the same formula for $\nu([b_w])$ under a much weaker hypothesis. In this paper, we show that one can indeed weaken this hypothesis considerably. For simplicity, we focus on the quasi-simple groups in the statement of this result.
\begin{thrm1}[\Cref{Generic-nu}]
Suppose that $\bG$ is a quasi-simple split group of rank $n$. Let $w=ut^\l v$ be an element of its Iwahori-Weyl group $\wtd W$ such that $\text{depth}(\l) > \Xi $, where $\Xi$ is a linear expression of $n$. Then the maximal Newton point associated to $w$ is given by $\nu([b]_w)=\l-\text{weight}(v^{-1},u)$.
\end{thrm1}
Here the precise expression of $\Xi$ depends on the group $\bG$. We refer to \cref{sec:QBG} for the definition of \emph{weight} and to the statement of \cref{Generic-nu} and its subsequent remark for the explicit form of $\Xi$ and relevant discussion on how to handle the case when $\bG$ is a split connected reductive group which is not necessarily quasi-simple.
\subsubsection{} Our second main result characterizes the covering relation of Bruhat order for most of the elements in $\wtd W$. 
\begin{thrm2}[\Cref{cover}]
Assume that $w=ut^\l v$ be an element of $\wtd W$ such that $\text{depth}(\l)$ is bigger than a certain constant. Let $r_\b=t^{mu\a^\vee}s_{u\a}$ be an affine reflection for some positive root $\a$ and integer $m$, and let $w'=r_\b w$. Then $w \gtrdot w'$ is a covering relation, i.e. $w \geq w'$ and $\ell(w)=\ell(w')+1$, if and only if one of the following conditions holds:
\begin{enumerate}
    \item $m=0$ and $\ell(us_\a)=\ell(u)-1$; in this case, $w'=us_\a t^\l v$.
    \item $m=1$ and $\ell(us_\a)=\ell(u)+\<2\rho,\a^\vee\>-1$; in this case, $w'=us_\a t^{\l-\a^\vee} v$. 
    \item $m=\<\a,\l\>$ and $\ell(s_\a v)=\ell(v)+1$; in this case, $w'=ut^\l s_\a v$.
    \item $m=\<\a,\l\>-1$ and $\ell(s_\a v)=\ell(v)-\<2\rho,\a^\vee\>+1$; in this case, $w'=ut^{\l-\a^\vee}s_\a v$.
\end{enumerate} 
\end{thrm2}
We refer to the statement of \Cref{cover} and its subsequent remark for the precise constant lower bound mentioned above. We remark that the same characterization was first proved in \cite[Proposition 4.1]{LS10} under the assumption that the depth of the relevant coweight is at least $2|W|+2$. Later it was improved in \cite[Proposition 4.2]{Milicevic21}, where the depth hypothesis is relaxed to a quadratic lower bound in the quasi-simple case as follows: 
\begin{equation*}
depth(\lambda) \geq 
\begin{cases}
2 \ell(w_0)+2, & \text{if $\bG~ \text{is not of type } G_2$;}\\
3 \ell(w_0)+3, & \text{if $\bG~ \text{is of type } G_2$.}\\
\end{cases}
\end{equation*}

\subsubsection{} In \cref{sec:dim}, we improve upon a result about dimension of $X(\mu,b)$. Based on recent discovery of the dimension formula for single affine Deligne-Lusztig variety $X_w(b)$ for ``sufficiently large" $w$ in \cite{He20+}, He and Yu provide a formula in \cite{HeYu20} for the dimension of $X(\mu,b)$ under the assumption that $\mu$ is superregular. We show that this formula in fact holds true for all \emph{dominant regular $\mu$}. 
\begin{thrm3}
Suppose that $\textbf{G}$ is a split connected reductive group. Let $\mu \in X_*(T)$ be dominant regular. Assume that $[b] \in B(G,\mu)$ with $\mu \geq \nu([b])+2\rho^\vee$. Then 
\begin{equation*}
    \dim X(\mu,b)=\<\rho, \mu-\nu([b])\>-\frac{1}{2}\text{def}_{\textbf{G}}(b)+\frac{1}{2}(\ell(w_0)-\ell_R(w_0)).
\end{equation*}
\end{thrm3}

We refer the reader to \cref{sec:ADLV} and \cref{sec:virdim} for relevant definitions. 

\subsection{Strategy}
We now discuss the outline of proofs of the above results. We refer to \cref{sec:setting} for relevant definitions.

Let us start by summarising our approach to prove theorem A. By employing certain tools from the theory of Demazure product, we show in \cref{sec:reduction} that one can recast the problem of computing the maximal Newton point associated with an arbitrary element of $\wtd W$ to that of an element lying in the dominant chamber. Hence, we focus exclusively on such element $w$ . To do so, we repeatedly apply an observation about Bruhat order on $\wtd W$ from \cite{Rapoport05} until we reach a dominant translation element below $w$. In order to carry out this procedure, we impose a certain lower bound on the depth of the coweight associated to $w$ in \cref{sec:bd1}. We also show that under this depth hypothesis, the Newton point of $[b_w]$ is given by a translation element below it. We finally confirm that this is indeed the translation element produced in the process alluded to before. This is achieved by proving a technical lemma, which employs techniques about downward Demazure product, and as such brings in another restriction in the depth hypothesis. Taking into account these two restrictions on the depth, we finally prove the theorem by appealing to the fact that the result is already known under superregularity hypothesis in \cite{Milicevic21}. 

\smallskip
To prove theorem B, we again (easily) reduce to the case of considering elements lying in the dominant chamber. Then we estimate a series of relevant quantities to assert that the coweight associated with the dominant translation part of the plausible cocover has to remain in the closure of one of the two specific Weyl chambers in order for covering relation to hold. Finally, we deal with each of these two cases separately. We note that a crucial aspect in which our proof differs from that in \cite{Milicevic21} is that we handle the cases where this coweight can be possibly singular. 

\smallskip
We remark that in the treatment of \cite{Milicevic21}, the covering relation in $\wtd W$ is in fact the starting point towards obtaining the formula of maximal Newton point. The strategy in loc. sit. is to apply the characterization of covering relation on $w$ and its subsequent cocovers repeatedly, until one reaches the maximal translation element below $w$. One then carefully keeps track of the cocovers of a given element $w$ to quantify a depth hypothesis under which this process work. Hence, it is plausible that one may use an improved version of the covering relation characterization (as the one established in theorem B) and carry out a similar analysis to obtain the maximal Newton point formula under a weaker hypothesis on the depth, \emph{which may even be a constant lower bound hypothesis}. However, we do not follow this approach here. While we prove theorem A only under a linear bound hypothesis on depth, our approach exclusively outlines techniques from Demazure product and \emph{it highlights the fact that the simple observation in \cref{Base case} lies at the heart of this matter}. We do point out some speculative assumptions about how one can improve theorem A, see \cref{speculation}.

\smallskip
Let us now briefly discuss our strategy for proving theorem C. The key ingredient in eliminating superregularity condition here is the additivity property of admissible sets established in \cite[Theorem 5.1]{He16}. This is one of the crucial results used in proving Kottwitz-Rapoport conjecture on non-emptiness of $X(\mu,b)$ in loc. sit. We use this additivity to show that certain steps in the existing proof in \cite{HeYu20} can be carried out in a way that superregularity is no longer required. We point out that the proof philosophy has a resemblance to that of theorem A, in the sense that we leverage the results already established in loc. sit. under superregularity hypothesis. 

\begin{acknowledgement}
I am grateful to my advisor Xuhua He for the suggestion to investigate these problems as well as numerous helpful discussions over the course of this work. I would also like to thank Jeffrey Adams and Thomas Haines for many helpful conversations and valuable feedback. During the preparation of this article, I was partially supported by a Graduate School Summer Research Fellowship and NSF grant DMS no. 1801352 through Thomas Haines. Upon completion of writing this paper, I was informed that Felix Schremmer had independently obtained similar results. I thank him for his thoughtful comments on a preliminary draft of the paper, especially for pointing out a minor error in the manuscript.
\end{acknowledgement}

\section{Preliminary}\label{sec:setting}

\subsection{Notations}\label{sec:notation} Recall that $F$ is a nonarchimedean local field and $\breve F$ is the completion of the maximal unramified extension of $F$. Let $\bG$ be a split connected reductive group over $F$; We write $\breve G$ for $\bG(\breve F)$. Let $\s$ be the Frobenius morphism of $\breve F/F$. We use the same symbol $\s$ for the induced Frobenius morphism on $\breve G$. Let $B$ be a fixed Borel subgroup with $T$ a split maximal torus in $B$.

Let $\CA$ be the apartment of $\bG$ corresponding to $T$. We fix an alcove $\ba$ in $\CA$, and let $\breve \CI \subset \breve G$ be the Iwahori subgroup corresponding to $\ba$. Then $\breve \CI$ is $\s$-stable. We denote by $N$ the normalizer of $T$ in $\bG$. The \emph{Iwahori-Weyl group} (associated to $T$) is defined as \[\wtd W= N(\breve F)/T(\breve F) \cap \breve \CI.\] 

Let $W=N(\breve F)/T(\breve F)$ be the finite Weyl group. Fixing a special vertex of the base alcove $\ba$, we have the splitting $$\wtd W=X_*(T) \rtimes W=\{t^{\l} x: \l \in X_*(T), x \in W\}.$$

For any $w \in \wtd W$, we choose a representative in $N(\breve F)$ which is also denoted by $w$. Note that the action $\s$ on $\breve G$ induces a natural action of $\s$ on $\wtd W$, which is just the identity automorphism since $\bG$ is split. We remark that one can identify the element $x \in \widetilde{W}$ with $x \ba$, the (extended) alcove that one obtains as image of the base alcove $\ba$ under $x$. 

Denote by $\tS$ the set of simple reflections in $\wtd W$; these are taken to be reflections along the walls of $\ba$. Let $\BS \subset \tS$ be the set of simple reflections in $W$. Let $\Phi$ (resp. $\Phi^+,\D$) be the set of all roots (resp. positive roots, simple roots). Let $w_0$ be the longest element in $W$. Let $\rho$ be the dominant weight with $\<\a^\vee, \rho\>=1$ for any $\a \in \D$ and $\rho^\vee$ be the dominant coweight with $\<\rho^\vee, \a\>=1$ for any $\a \in \D$. Let $\theta$ be the highest root. We denote the dominant (rational) Weyl chamber by $\CC^+ \subset X_*(T)_{\BQ}$, and the image of this chamber under some element $x \in W\setminus \{1\}$ by $\CC_x$. For an irreducible Weyl group $W$, we follow the labeling of roots as in \cite{Bou} and we usually write $s_i$ instead of $s_{\a_i}$, where $\D=\{\a_i: 1 \leq i \leq n\}$. For any $J \subset \tilde \BS$, we denote by $W_J$ the subgroup of $\wtd W$ generated by to $J$. Let ${}^J \wtd W$ (resp. $\wtd W^J$) be the set of minimal length elements in their cosets in $W_J \backslash \wtd W$ (resp. $\wtd W/W_J$).

\subsection{Length function and Bruhat order on $W$ and $\wtd W$}\label{sec: l and <}
We denote by $\ell$ the length function on $\wtd W$ determined by the base alcove $\ba$. Let $W_a$ be the subgroup of $\wtd W$
generated by $\tS$. Then $W_a$ is an affine Weyl group. Let $\Omega \subset \wtd W$ be the subgroup of length-zero elements (or equivalently, the stabilizer of $\ba$) in $\wtd W$. Then $$\wtd W = W_a \rtimes \Omega.$$
$W_a$ is the Coxeter group associated to $\tS$, and hence it comes equipped with an associated Bruhat order, which we denote by $\leq$. This is extended to $\wtd W$ as follows. For two elements $\wtd w_1, \wtd w_2 \in \wtd W$, use the above decomposition to write $\wtd w_i=w_i\z_i$ with $w_i \in W_a, \z_i \in \Omega$ for $i=1,2$. We then declare $\wtd w_1 \leq \wtd w_2$ if $w_1 \leq w_2$ and $\z_1=\z_2$. The following properties about $\ell$ and $\leq$ are well-known, e.g. see \cite[Lemma 4.1]{Milicevic21}, \cite[exercise 23, Chapter 2]{Knapp88} and \cite[exercise 21, Chapter 2 ]{BB05} respectively.
\begin{itemize}
    \item Let $\l \in X_*(T)$ be regular dominant, and let $w=ut^\l v \in \wtd W$. Then 
    \begin{equation}\label{len1}
        \ell(w)=\ell(u)+\ell(t^\l)-\ell(v)=\ell(u)+\<2\rho,\l\>-\ell(v).
    \end{equation}
    \item For any element $x$ of $W$, let $\text{Inv}(x)=\{\a\in \Phi^+: x\a \in -\Phi^+\}$. Then for any two elements $x, y \in W$, we have 
    \begin{equation}\label{len2}
        \ell(xy)=\ell(x)+\ell(y)-2|\text{Inv}(x)\cap \text{Inv}(y^{-1})|=\ell(x)-\ell(y)+2|\text{Inv}(x)^c\cap \text{Inv}(y^{-1})|.
    \end{equation}
    \item Let $w_1,w_2,v \in \wtd W$ be three elements such that $\ell(w_iv)=\ell(w_i)+\ell(v)$ for $i=1,2$. Then 
    \begin{equation}\label{len3}
        w_1 \geq w_2 \text{ is equivalent to } w_1v \geq w_2v.
    \end{equation}
\end{itemize}
\subsection{The $\s$-conjugacy classes of $\breve G$}\label{sec:B(G)}
We define the $\s$-conjugation action on $\breve G$ by $g \cdot_\s g'=g g' \s(g) \i$. Let $B(G)$ be the set of $\s$-conjugacy classes on $\breve G$. The classification of the $\s$-conjugacy classes is obtained by Kottwitz in \cite{Kottwitz85} and \cite{Kottwitz97}. Any $\s$-conjugacy class $[b]$ is determined by two invariants: 
\begin{itemize}
	\item Its Kottwitz point $\k([b]) \in \Omega$, and 
	
	\item Its Newton point $\nu([b]) \in X_*(T)_{\mathbb{Q}}$. 
\end{itemize}

We denote by $\leq$ the dominance order on $X_*(T)_{\mathbb{Q}}$ i.e., for $\nu, \nu' \in X_*(T)_{\BQ}$, we say that $\nu \leq \nu'$ if $\nu'-\nu$ is a non-negative (rational) linear combination of positive roots. This upgrades to a partial order on the set $B(G)$. We define $[b_1] \preceq [b_2]$ if $\k([b_1])=\k([b_2])$ and $\nu([b_1])\leq \nu([b_2])$. Let $B(G)_w=\{[b]: [b] \cap \breve I w \breve I \neq \emptyset\}$. The subposet $(B(G)_w,\preceq)$ has an unique maximal element, which coincides with the generic $\s$-conjugacy class in the double coset $\breve I w \breve I$. We denote this maximal element by $[b_w]$ and we set $\nu_w=\nu([b_w])$.

\subsection{Affine Deligne-Lusztig varieties}\label{sec:ADLV}
Let $\mathcal{F}=\breve G/\breve \CI$ be the \emph{affine flag variety}. Recall that for any $b \in \breve G$ and $w \in \wtd W$, the associated \emph{affine Deligne-Lusztig variety} in the affine flag variety is defined as 
$X_w(b)=\{g \breve \CI \in \breve G/\breve \CI; g \i b \s(g) \in \breve \CI w \breve \CI\}$.
We will also discuss certain finite union of affine Deligne-Lusztig varieties. Following \cite{Rapoport05}, we define for a dominant $\mu \in X_*(T)$ its associated \emph{admissible set} as 
\begin{equation*}
    \text{Adm}(\mu)=\{w\in \wtd W: w \leq t^{x (\mu)}\text{ for some } x \in W\}.
\end{equation*}

We will need the following additivity property of admissible sets.
\begin{theorem}\label{adm-add}\cite[Theorem 5.1]{He16}
Let $\mu,\mu' \in X_*(T)$ be dominant. Then we have
\begin{equation*}\label{adm-equal}
    \text{Adm}(\mu) \cdot \text{Adm}(\mu')=\text{Adm}(\mu+\mu').
\end{equation*}
\end{theorem}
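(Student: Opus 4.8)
The plan is to prove the two inclusions separately. The inclusion $\text{Adm}(\mu+\mu')\subseteq\text{Adm}(\mu)\cdot\text{Adm}(\mu')$ is elementary, while $\text{Adm}(\mu)\cdot\text{Adm}(\mu')\subseteq\text{Adm}(\mu+\mu')$ carries the real content and would be proved by an induction built on the Demazure product. Two facts I would use throughout: each admissible set, being a union over a $W$-orbit of translations, is stable under the conjugation action of $W$ on $\wtd W$ (which preserves $\ell$, the Bruhat order, and the Demazure product); and each admissible set is closed downward under the Bruhat order.

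For $\text{Adm}(\mu+\mu')\subseteq\text{Adm}(\mu)\cdot\text{Adm}(\mu')$: suppose $w\le t^{x(\mu+\mu')}$. Since $\mu,\mu'$ are dominant, $\ell(t^{x(\mu+\mu')})=\langle 2\rho,\mu+\mu'\rangle=\ell(t^{x\mu})+\ell(t^{x\mu'})$ while $t^{x(\mu+\mu')}=t^{x\mu}\,t^{x\mu'}$; hence, after the routine step of absorbing the length-zero ($\Omega$-)parts by conjugation, the concatenation of a reduced expression of $t^{x\mu}$ with one of $t^{x\mu'}$ is a reduced expression of $t^{x(\mu+\mu')}$. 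By the subword property of the Bruhat order on $\wtd W$, the element $w$ appears as a subexpression; reading off the chosen letters in the first block gives $w_1\le t^{x\mu}$ and those in the second block give $w_2\le t^{x\mu'}$, with $w=w_1w_2$. Thus $w_1\in\text{Adm}(\mu)$ and $w_2\in\text{Adm}(\mu')$.

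For $\text{Adm}(\mu)\cdot\text{Adm}(\mu')\subseteq\text{Adm}(\mu+\mu')$: given $w_1\le t^{x\mu}$ and $w_2\le t^{y\mu'}$, the Demazure ($*$-)product --- which is associative, monotone in each argument, and satisfies $ab\le a*b$ --- gives $w_1w_2\le w_1*w_2\le t^{x\mu}*t^{y\mu'}$, so by downward-closedness it is enough to show $t^{x\mu}*t^{y\mu'}\in\text{Adm}(\mu+\mu')$ for all $x,y\in W$. Conjugating by $x^{-1}\in W$ reduces this (with $z=x^{-1}y$) to: $t^\mu*t^{z\mu'}\in\text{Adm}(\mu+\mu')$ for every $z\in W$. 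I would prove this by induction on $\ell(z)$. The base case $z=e$ is the identity $t^\mu*t^{\mu'}=t^{\mu+\mu'}$ (the product is length-additive since both coweights are dominant). For $z\ne e$, pick a simple reflection $s=s_i$ with $\ell(sz)=\ell(z)-1$ and set $z'=sz$; since $s$ is a left descent of $z$, the root $z^{-1}\alpha_i$ is negative and so $\langle\alpha_i,z\mu'\rangle=\langle z^{-1}\alpha_i,\mu'\rangle\le 0$ by dominance of $\mu'$. If $\langle\alpha_i,z\mu'\rangle=0$ then $z'\mu'=z\mu'$, so the claim for $z$ is the claim for $z'$, which holds by induction. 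If $\langle\alpha_i,z\mu'\rangle<0$, I would split on $\langle\alpha_i,\mu\rangle$. When $\langle\alpha_i,\mu\rangle=0$ we have $s\mu=\mu$, so conjugation by $s$ carries $t^\mu*t^{z\mu'}$ to $t^{s\mu}*t^{sz\mu'}=t^\mu*t^{z'\mu'}$, which is in $\text{Adm}(\mu+\mu')$ by induction; hence so is $t^\mu*t^{z\mu'}$. When $\langle\alpha_i,\mu\rangle>0$, the reflection $s$ is a right descent of $t^\mu$ (from $t^\mu(\alpha_i)<0$ as an affine root), and translating the conjugation $t^{z\mu'}=s\,t^{z'\mu'}s$ into a concatenation of reduced expressions (using $\ell(t^{z\mu'}s)=\ell(t^{z\mu'})+1$ and $\ell(s\,t^{z'\mu'})=\ell(t^{z'\mu'})+1$) yields $(t^\mu*t^{z\mu'})*s=(t^\mu*s)*t^{z'\mu'}=t^\mu*t^{z'\mu'}$; since $t^\mu*t^{z\mu'}\le(t^\mu*t^{z\mu'})*s$ and the right-hand side is in $\text{Adm}(\mu+\mu')$ by induction, downward-closedness finishes the argument.

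The step I expect to be the main obstacle is the inductive step, and within it the identity $(t^\mu*t^{z\mu'})*s=(t^\mu*s)*t^{z'\mu'}$: making it precise amounts to checking that concatenating reduced words for $t^\mu$, $t^{z'\mu'}$, and $s$ in the two natural groupings computes the two sides, which hinges on getting the affine-root length bookkeeping exactly right (e.g.\ that $\ell(s_it^\lambda)=\ell(t^\lambda)+1$ precisely when $\langle\alpha_i,\lambda\rangle\ge 0$, together with the analogous statement for right multiplication). The conceptual point is that the sub-cases $\langle\alpha_i,\mu\rangle=0$ and $\langle\alpha_i,\mu\rangle>0$ genuinely require different mechanisms --- $W$-conjugation invariance in the first, the Demazure-product identity together with downward-closedness in the second --- and one must verify that these two, together with the trivial case $\langle\alpha_i,z\mu'\rangle=0$, exhaust all possibilities.
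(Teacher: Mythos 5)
This statement is not proved in the paper at all: it is quoted verbatim from \cite[Theorem 5.1]{He16}, so there is no in-paper argument to compare against and I can only assess your proposal on its own terms. Your easy inclusion $\Adm(\mu+\mu')\subseteq \Adm(\mu)\cdot\Adm(\mu')$ is correct, and reducing the hard inclusion, via monotonicity and $ab\le a*b$, to the claim that $t^{x\mu}*t^{y\mu'}\in\Adm(\mu+\mu')$ for all $x,y\in W$ is also correct and is the right first move. Within your induction, the sub-case $\langle\alpha_i,z\mu'\rangle<0$, $\langle\alpha_i,\mu\rangle>0$ is handled correctly: $(t^\mu*t^{z\mu'})*s=t^\mu*(t^{z\mu'}*s)=t^\mu*(s*t^{z'\mu'})=(t^\mu*s)*t^{z'\mu'}=t^\mu*t^{z'\mu'}$ does follow from associativity of $*$ and the length bookkeeping you describe.

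The proof nevertheless has a fatal gap: the ``fact'' you invoke throughout, that conjugation by $W\subset\wtd W$ preserves $\ell$, the Bruhat order and the Demazure product and hence stabilizes admissible sets, is false. Conjugation by elements of $W$ is not length-preserving on $\wtd W$ (only conjugation by $\Omega$ is). Concretely, for $\bG=\SL_2$ with simple affine reflections $s_0,s_1$ one has $\Adm(\alpha^\vee)=\{1,s_0,s_1,s_0s_1,s_1s_0\}$, the set of all elements of length $\le 2$, whereas $s_1s_0s_1$ is reduced of length $3$; thus $s_0\in\Adm(\alpha^\vee)$ but $s_1s_0s_1\notin\Adm(\alpha^\vee)$, so $\Adm(\mu)$ is not conjugation-stable. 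Likewise $s_1(s_0*s_0)s_1=s_1s_0s_1$ has length $3$ while $(s_1s_0s_1)*(s_1s_0s_1)=s_1s_0s_1s_0s_1$ has length $5$, so conjugation does not commute with $*$. This invalidates two essential steps of your argument: (i) the reduction from $t^{x\mu}*t^{y\mu'}$ to $t^{\mu}*t^{z\mu'}$ by conjugating by $x^{-1}$, and (ii) the sub-case $\langle\alpha_i,z\mu'\rangle<0$, $\langle\alpha_i,\mu\rangle=0$, which you dispose of by conjugating by $s$. The latter sub-case is non-vacuous precisely when $\mu$ is singular, and the theorem is asserted for arbitrary dominant $\mu$, so it cannot be ignored; note that in this case the Demazure manipulation only yields $(t^\mu*t^{z\mu'})*s=s*(t^\mu*t^{z'\mu'})$, and an extra left factor of $s$ need not remain in $\Adm(\mu+\mu')$ (the $\SL_2$ example above is exactly of this shape). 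Closing this case is where the real content of \cite[Theorem 5.1]{He16} lies, and your proposal does not supply a valid mechanism for it.
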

For any $b \in \breve G$, we set 
\begin{equation*}
    X(\mu,b)=\bigcup_{w \in \text{Adm}(\mu)} X_w(b).
\end{equation*}

The sets $X_w(b)$ and $X(\mu,b)$ are subschemes, locally of finite type, of the affine flag variety (in the usual
sense in equal characteristic; in the sense of Zhu \cite{Zhu17} in mixed characteristic). Following \cite{KR03}, we define the set of \emph{neutrally acceptable elements for $\mu$} as
\begin{equation*}
    B(G,\mu)=\{[b] \in B(G): \k([b])=\mu, \nu([b])\leq \mu\}.
\end{equation*}

Settling the Kottwitz-Rapoport conjecture made in \cite{KR03} and \cite{Rapoport05} about the non-emptiness pattern for $X(\mu,b)$, He proves the following result in \cite{He16}.
\begin{theorem}\label{KR-conjecture}\cite[Theorem A]{He16}
$X(\mu,b) \neq \emptyset$ if and only if $[b] \in B(G,\mu)$.
\end{theorem}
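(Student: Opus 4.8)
The plan is to prove the two implications separately; the forward direction (necessity) is a formal consequence of standard structure theory, while the converse carries all the content.

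\emph{Necessity.} Suppose $X(\mu,b)\neq\emptyset$, so $g\i b\s(g)\in\breve\CI w\breve\CI$ for some $g\in\breve G$ and some $w\in\Adm(\mu)$, say $w\le t^{x\mu}$ with $x\in W$. Since $\bG$ is split, the Kottwitz homomorphism $\k\colon\breve G\to\pi_1(\bG)$ is $\s$-invariant and constant on $\breve\CI$-double cosets; as $w$ and $t^{x\mu}$ lie in the same coset of $W_a$ (only such elements being Bruhat-comparable), $\k([b])=\k(w)=\k(t^{x\mu})=\k(\mu)$. For the Newton invariant, $w\in\Adm(\mu)$ forces $\breve\CI w\breve\CI\subseteq\breve K t^\mu\breve K$ for $\breve K\supset\breve\CI$ the special maximal parahoric: indeed $\breve K t^{x\mu}\breve K=\breve K t^\mu\breve K$ is $\breve\CI$-bistable and closed, hence contains the Bruhat closure of $\breve\CI t^{x\mu}\breve\CI$. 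The group-theoretic Mazur inequality (Rapoport--Richartz, Kottwitz, Gashi) then gives $\nu([b])\le\mu$, so $[b]\in B(G,\mu)$.

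\emph{Sufficiency.} Fix $[b]\in B(G,\mu)$; we must produce $w\in\Adm(\mu)$ with $X_w(b)\neq\emptyset$. The key structural input is the Deligne--Lusztig reduction for $X_w(b)$ (for $s\in\tS$ with $\ell(sws)\le\ell(w)$, $X_w(b)\neq\emptyset$ iff $X_{sws}(b)\neq\emptyset$, or, when $\ell(sws)=\ell(w)-2$, iff $X_{sws}(b)\neq\emptyset$ or $X_{sw}(b)\neq\emptyset$), which, packaged through class polynomials in the cocenter of the affine Hecke algebra, converts the nonemptiness of $X_w(b)$ into combinatorics in $\wtd W$; in particular, for a $\s$-straight element $w'$ (i.e.\ $\ell(w')=\<2\rho,\nu_{w'}\>$) one has $X_{w'}(b)\neq\emptyset\iff[b]=[b_{w'}]$, and $w'\mapsto[b_{w'}]$ is a bijection from $\s$-straight $\s$-conjugacy classes in $\wtd W$ onto $B(G)$. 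Since $\Adm(\mu)$ is a lower set for the Bruhat order and behaves compatibly with the reduction moves, the problem reduces to the combinatorial assertion that $\Adm(\mu)$ ``reaches'' precisely the classes $[b]\in B(G,\mu)$, i.e.\ it contains a reduction-representative of the $\s$-straight class attached to each such $[b]$. To prove this I would invoke the additivity $\Adm(\mu_1)\cdot\Adm(\mu_2)=\Adm(\mu_1+\mu_2)$ of \cref{adm-add} to split $\mu$ into minuscule (or small) coweights, reducing to the explicit determination — for minuscule $\mu_i$ — of which straight classes meet $\Adm(\mu_i)$ (the answer being exactly $B(G,\mu_i)$), and then reassemble the pieces length-additively so that straightness is preserved.

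\emph{Main obstacle.} Necessity and the formal reduction/cocenter machinery are routine; the genuine difficulty is the combinatorial core above. The two delicate points are checking that the Deligne--Lusztig reduction can always be run without leaving $\Adm(\mu)$ (it is stable under the conjugation moves only modulo Bruhat-lower terms, which fortunately remain in $\Adm(\mu)$), and arranging the induction on $\mu$ via \cref{adm-add} so that length-additivity — hence straightness — survives each step, which in the end rests on an explicit case analysis for minuscule and small coweights. I expect essentially all of the real work to be concentrated there.
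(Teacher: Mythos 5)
First, note that the paper does not prove this statement at all: it is imported verbatim as \cite[Theorem A]{He16}, so there is no internal argument to compare against. Judged on its own terms, your necessity direction is essentially correct and standard: $\k$ is constant on Iwahori double cosets and insensitive to $\s$-conjugation, Bruhat-comparable elements lie in the same $W_a$-coset, and $\breve\CI w\breve\CI\subseteq \breve K t^\mu \breve K$ for $w\in\Adm(\mu)$ reduces the Newton bound to the group-theoretic Mazur inequality of Rapoport--Richartz (Gashi and Kottwitz--Rapoport are relevant to the \emph{converse} inequality, a minor misattribution). Your sufficiency direction correctly names the architecture of He's actual proof: Deligne--Lusztig reduction, class polynomials in the cocenter, the bijection between $\s$-straight $\s$-conjugacy classes of $\wtd W$ and $B(G)$, and the criterion $X_{w'}(b)\neq\emptyset\iff[b]=[b_{w'}]$ for straight $w'$.

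The genuine gap is that the combinatorial core --- that for every $[b]\in B(G,\mu)$ some straight element of the corresponding $\s$-conjugacy class of $\wtd W$ lies in $\Adm(\mu)$ --- is asserted rather than proved, and the specific route you propose for it does not work in general. Decomposing $\mu$ into minuscule (or ``small'') coweights via the additivity $\Adm(\mu_1)\cdot\Adm(\mu_2)=\Adm(\mu_1+\mu_2)$ is not available for all types: the groups of type $E_8$, $F_4$ and $G_2$ have no nonzero minuscule coweights, so a general dominant $\mu$ admits no such decomposition, and even where minuscule coweights exist they need not generate the dominant cone. He's argument instead reduces to the \emph{basic} case by a Hodge--Newton/$P$-alcove descent to Levi subgroups (using the G\"ortz--He--Nie description of $B(G)_w$), handles the basic class by an explicit construction of a straight element in $\Adm(\mu)$, and uses additivity in a supporting role; moreover the compatibility of the reduction moves with $\Adm(\mu)$, which you flag as ``fortunately'' working out, is itself one of the nontrivial verifications. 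As it stands the proposal is a faithful road map of the known proof with the decisive steps left open, so it cannot be accepted as a proof.
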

\subsection{Virtual dimension of affine Deligne-Lusztig variety}\label{sec:virdim}
Note that any element $w \in \wtd W$ can be written in a unique way as $w=u t^\l v$ with $\l$ dominant, $u, v \in W$ such that $t^\l v \in {}^{\BS} \wtd W$. In this case, we set $\eta(w) = vu.$

Let $\mathbf J_b$ be the reductive group over $F$ with $\mathbf J_b(F)=\{g \in \breve G; g b \s(g) \i=b\}.$ Then the \emph{defect} of $b$ is defined by $\rm{def}_{\bG}(b)=\rank_F \bG-\rank_F \mathbf J_b.$ Here for a reductive group $\mathbf H$ defined over $F$, $\rank_F$ is the $F$-rank of the group $\mathbf H$. 

Following \cite[Section 10.2]{He14}, we define the \emph{virtual dimension} to be \[d_{w}(b)=\frac 12 \big( \ell(w) + \ell(\eta(w)) -\rm{def}_{\bG}(b)  \big)-\<\rho, \nu([b])\>.\] 

The justification of defining such an expression lies in a result proved by He in \cite[Theorem 2.30]{He15} that says $\dim X_w(b) \leq d_w(b)$. Recent work by Mili\'cevi\'c and Viehmann in \cite{VM20} singles out those $w \in \wtd W$ for which $\dim X_w(b_w)=d_w(b_w)$ holds; these are called \emph{cordial elements}, and it is shown in loc. sit. that $B(G)_w$ exhibits remarkable properties for such $w$.

\subsection{Demazure product and its variations}\label{sec:Demazure}
We recall three operations  $*, \rhd, \lhd:\widetilde{W} \times \widetilde{W} \rightarrow \widetilde{W}$. Here $*$ is the \emph{Demazure product} and $\lhd, \rhd$ are the left and right \emph{downward Demazure products}, respectively. We describe these operations in form of the following lemma.
\begin{lemma}\label{Dem prod} \cite[Section 2.2]{HeLam15}
Let $x,y \in \widetilde{W}$.
\begin{enumerate}
    \item The subset $\{uv: u\leq x, v\leq y\}$ contains an unique maximal element, which we denote by $x*y$. Moreover, $x*y=u'y=xv'$ for some $u' \leq x$ and $v' \leq y$ and $\ell(x*y)=\ell(u')+\ell(y)=\ell(x)+\ell(v')$.
    \item The subset $\{uy: u \leq x\}$ contains a unique minimal element, which
    we denote by $x \rhd y$. Moreover, $x \rhd y=u''y$ for some $u''\leq x$ with $\ell(x \rhd y)=\ell(y)-\ell(u'')$.
    \item The subset $\{xv: v \leq y\}$ contains a unique minimal element, which
    we denote by $x \lhd y$. Moreover, $x \lhd y=xv''$ for some $v''\leq y$ with $\ell(x \lhd y)=\ell(x)-\ell(v'')$.
\end{enumerate}
\end{lemma}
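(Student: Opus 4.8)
The plan is to establish the lemma from first principles using only elementary Coxeter theory, the key point being that $\wtd W$ is not itself a Coxeter group but differs from one only in the length-zero part $\Omega$. The first move I would make is to reduce all three assertions to the affine Weyl group $W_a$. Writing $x=x_0\zeta_x$ and $y=y_0\zeta_y$ with $x_0,y_0\in W_a$ and $\zeta_x,\zeta_y\in\Omega$, every product $uv$ with $u\le x$ and $v\le y$ has $\Omega$-component exactly $\zeta_x\zeta_y$ (because $u\le x$ forces the $\Omega$-components to agree, by the definition of $\le$ on $\wtd W$) and $W_a$-component of the form $a\cdot{}^{\zeta_x}b$ with $a\le x_0$ and $b\le{}^{\zeta_x}y_0$; since $\Omega$ acts on $(W_a,\tS,\ell,\le)$ by length- and order-preserving automorphisms, this problem — and, verbatim, the ones for $\{uy:u\le x\}$ and $\{xv:v\le y\}$ — is transported into $W_a$. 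So I would henceforth assume $x,y\in W_a$.

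For (1), rather than building up the Demazure product abstractly (e.g. via the $0$-Hecke monoid) I would directly \emph{define} $x*y:=\max\{uv:u\le x,\ v\le y\}$ and prove by induction on $\ell(x)$ both that this maximum exists and that it can be written as $xv'=u'y$ with $v'\le y$, $u'\le x$, and $\ell(x*y)=\ell(x)+\ell(v')=\ell(u')+\ell(y)$. The case $\ell(x)=0$ is immediate. For $\ell(x)\ge 1$ I would write $x=sx''$ with $s\in\tS$ and $\ell(x'')=\ell(x)-1$; since $sx''=x$ has length $\ell(x'')+1$, the Bruhat interval splits as $[1,x]=[1,x'']\cup s[1,x'']$ (lifting property, \cite[Chapter~2]{BB05}), so $\{uv:u\le x,\ v\le y\}=M\cup sM$ with $M=\{uv:u\le x'',\ v\le y\}$. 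Putting $z:=x''*y=\max M$ from the inductive hypothesis, I would check $\max(M\cup sM)=\max(z,sz)$: this element lies in the set since $sz=(sx'')v'=xv'$ for the factorization $z=x''v'$, it dominates every $w\in M$ trivially, and it dominates every $sw$ ($w\in M$) by a short case-check on $\ell(sw)$ using the lifting property. The factorization $x*y=xv''$ with length additivity would then be extracted by hand: when $sz>z$ take $v''=v'$; when $sz<z$ one has $\max=z=x''v'$, and the strong exchange condition applied to a reduced word $[\mathrm{red}(x'')][\mathrm{red}(v')]$ of $z$ forces the length drop in $s\cdot x''v'$ to occur inside the $v'$-block (occurring in the $x''$-block would give $x=\tilde x''$ of length $\ell(x)-2$, a contradiction), yielding $z=xv''$ with $v''<v'$ and the right length. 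The symmetric factorization $x*y=u'y$ I would then deduce by applying the statement just proved to $(y^{-1},x^{-1})$, using that inversion is an order isomorphism of $(\wtd W,\le)$ and $\{uv:u\le x,\ v\le y\}^{-1}=\{ab:a\le y^{-1},\ b\le x^{-1}\}$.

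For (2) I would argue identically with minima: set $x\rhd y:=\min\{uy:u\le x\}$ and induct on $\ell(x)=\ell(sx'')$, using $[1,x]=[1,x'']\cup s[1,x'']$ to write $\{uy:u\le x\}=N\cup sN$ with $N=\{uy:u\le x''\}$, then check $\min(N\cup sN)=\min(z,sz)$ for $z=x''\rhd y$ — this time via the dual consequences of the lifting property ($w\ge z$ with $\ell(sw)<\ell(w)$ forces $sw\ge\min(z,sz)$, and $u''\le x''$ forces $su''\le x$). Minimality then yields $x\rhd y=u''y$ with $u''\le x$ and $\ell(x\rhd y)=\ell(y)-\ell(u'')$. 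Finally (3) I would reduce to (2) by inversion: $\{xv:v\le y\}^{-1}=\{u\,x^{-1}:u\le y^{-1}\}$, so $x\lhd y=(y^{-1}\rhd x^{-1})^{-1}$, and transporting the length and factorization data for $y^{-1}\rhd x^{-1}$ back gives $x\lhd y=xv''$ with $v''\le y$ and $\ell(x\lhd y)=\ell(x)-\ell(v'')$.

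The hard part is not any individual step — each is routine — but assembling exactly the right package of elementary Bruhat-order inputs (the interval identity $[1,sw]=[1,w]\cup s[1,w]$ for $sw>w$, the several cases of the lifting property, and the strong exchange condition) and threading the $\Omega$-bookkeeping cleanly through all three parts. This is why the result is customarily just quoted; indeed in the main text we simply invoke \cite[Section 2.2]{HeLam15}. A reader willing to take the associative Demazure product $*$ and its downward analogues $\rhd,\lhd$ as given (say, from the $0$-Hecke monoid of $(W_a,\tS)$, extended to $\wtd W$ via the $\Omega$-action) can bypass the reduced-word manipulations and read (1)--(3) off directly from the defining maximality and minimality.
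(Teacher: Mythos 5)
Your proposal is correct, but there is nothing in the paper to compare it against: Lemma \ref{Dem prod} is stated as a quotation from \cite[Section 2.2]{HeLam15} and no proof is given in the text (these facts are standard properties of the Demazure product / $0$-Hecke monoid). What you have written is a genuine self-contained proof, and the skeleton is sound: the reduction to $W_a$ via the $\Omega$-bookkeeping is valid because $u\le x$ forces equal $\Omega$-components and $\Omega$ acts on $(W_a,\tS)$ by length- and order-preserving automorphisms; the identity $[1,sx'']=[1,x'']\cup s[1,x'']$ for $sx''>x''$ follows from the subword property; the comparison $\max(M\cup sM)=\max(z,sz)$ (resp.\ $\min(N\cup sN)=\min(z,sz)$) is exactly the standard consequence of the lifting property that $u\le w$ implies $\max(u,su)\le\max(w,sw)$ and $\min(u,su)\ge\min(w,sw)$... with the inequality read in the correct direction for minima; and the exchange-condition argument locating the length drop of $s\cdot x''v'$ in the $v'$-block is correct (deletion in the $x''$-block would force $x=\widetilde{x}''$ with $\ell(\widetilde{x}'')\le\ell(x)-2$, which is impossible). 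Two places are stated more tersely than they deserve: in part (2), when the minimum is $sz<z$ you should note explicitly that $\ell(su'')=\ell(u'')+1$ is forced (otherwise $su''y\in N$ would lie strictly below $\min N$), which is what makes the length formula $\ell(x\rhd y)=\ell(y)-\ell(su'')$ come out; and in the $\Omega$-reduction the condition should read $b\le y_0$ with $W_a$-component $a\cdot{}^{\zeta_x}b$, equivalently $b'\le{}^{\zeta_x}y_0$ with component $ab'$. Neither affects correctness. Your closing remark matches what the paper actually does, namely take the lemma as given from the literature.
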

The Demazure product is related to product of closure of two Iwahori double cosets in $\breve G$. More precisely, for any
$w \in \wtd W , \overline{\breve I w \breve I}= \bigcup_{w' \leq w} \breve I w' \breve I$ is a closed admissible subset of $G$ in the sense of \cite{He16}. Then for any $x,y \in \wtd W$, we have
\begin{equation*}
    \overline{\breve I x \breve I}\cdot \overline{\breve I y \breve I}=\bigcup_{u \leq x}\breve I x \breve I \cdot \bigcup_{v \leq y}\breve I y \breve I=\bigcup_{w \leq x*y}\breve I w \breve I=\overline{\breve I (x*y) \breve I}.
\end{equation*}

The above equation also implies that $*$ is an associative binary operation on $\widetilde{W}$. We will need the following results about these operations.
\begin{lemma}\label{Dem prod prop}\cite[Lemma 2 and Lemma 3]{He09}
\begin{enumerate}
    \item If $w \geq w'$, $v\leq v'$ are elements of $\wtd W$, then $w \rhd v \leq w' \rhd v'$.
    \item For any three elements $x,y,z \in \wtd W$, we have $x \rhd(y \rhd z)=(x * y)\rhd z$. In other words, the action $(\wtd W, *) \times W \rightarrow\wtd W, (x, y) \rightarrow x\rhd y$ is a left action of the monoid $(\wtd W, *)$.
\end{enumerate}
\end{lemma}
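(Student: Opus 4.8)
The plan is to prove part (2) first, by induction on $\ell(x)$, and then to read off part (1) from it together with the standard subword and lifting properties of the Bruhat order on $\wtd W$. The crucial structural remark is that both sides of (2) are built up one simple reflection at a time: for $s\in\tS$ one has $s\rhd y=\min\{y,sy\}$ (equal to $sy$ when $sy<y$ and to $y$ otherwise), while for a length-zero $\tau\in\Omega$ one has $\tau\rhd y=\tau y$. Granting the case $\ell(x)\le 1$ of (2), the general case follows by choosing a left descent $s$ of $x$ and writing $x=sx'$ with $\ell(x')=\ell(x)-1$, so that $s*x'=x$, and computing
\[
x\rhd(y\rhd z)=s\rhd\bigl(x'\rhd(y\rhd z)\bigr)=s\rhd\bigl((x'*y)\rhd z\bigr)=\bigl(s*(x'*y)\bigr)\rhd z=(x*y)\rhd z,
\]
where the outer equalities use the case $\ell(\,\cdot\,)=1$, the middle one is the inductive hypothesis for $x'$, and the last uses associativity of $*$. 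The base case $\ell(x)=0$ is immediate because left multiplication by an element of $\Omega$ is an automorphism of $(\wtd W,\le)$, and the reformulation as a left action then follows since trivially $1\rhd z=z$.

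The heart of the matter is thus the case $x=s$. Put $z_0:=y\rhd z$, so $z_0=u_0z$ for some $u_0\le y$ with $\ell(z_0)=\ell(z)-\ell(u_0)$, and $z_0=\min\{uz:u\le y\}$. If $\ell(sy)<\ell(y)$, then $s*y=y$ and I must check $s\rhd z_0=z_0$, i.e. $sz_0>z_0$; otherwise $sz_0=(su_0)z$ with $su_0\le y$ (this inclusion is forced by $u_0\le y$ and $sy<y$ via the lifting property), contradicting the minimality of $z_0$. If $\ell(sy)>\ell(y)$, then $s*y=sy$, and using the subword description $\{u:u\le sy\}=\{u:u\le y\}\cup\{su:u\le y\}$ I would compute
\[
(sy)\rhd z=\min\Bigl(z_0,\ \min\{\,suz:u\le y\,\}\Bigr)=\min(z_0,sz_0)=s\rhd z_0 .
\]
The only nontrivial point here is the inequality $\min\{suz:u\le y\}\ge\min(z_0,sz_0)$: for each $u\le y$ one compares $suz$ with $uz\ge z_0$, and in the case $suz<uz$ one applies the lifting property to the pair $(z_0,uz)$, treating the subcases $sz_0<z_0$ and $sz_0>z_0$ separately, to conclude $suz\ge\min(z_0,sz_0)$. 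I expect this case analysis to be the main obstacle; everything else is bookkeeping.

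Finally, (1) drops out quickly. The inequality $w\rhd v\le w'\rhd v$ for $w\ge w'$ is immediate, since $\{uv:u\le w'\}\subseteq\{uv:u\le w\}$ and enlarging the set over which one minimises can only decrease the minimum. For the complementary monotonicity $w'\rhd v\le w'\rhd v'$ when $v\le v'$, I would induct on $\ell(w')$: the case $\ell(w')=0$ is again left multiplication by an element of $\Omega$, and for $w'=sw''$ with $\ell(w'')=\ell(w')-1$ part (2) gives $w'\rhd(\,\cdot\,)=s\rhd\bigl(w''\rhd(\,\cdot\,)\bigr)$, so the claim reduces to the observation that $a\mapsto s\rhd a=\min(a,sa)$ is order-preserving, which is a short check with the lifting property. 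Chaining the two inequalities yields $w\rhd v\le w'\rhd v\le w'\rhd v'$, which is precisely (1).
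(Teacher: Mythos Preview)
Your proof is correct. The paper does not give its own argument for this lemma; it simply cites \cite[Lemma~2 and Lemma~3]{He09}, so there is nothing in the paper to compare against beyond noting that your self-contained verification via the lifting property and induction on $\ell(x)$ is exactly the standard way these facts are established in the $0$-Hecke algebra literature.

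One small organizational remark: when you write ``granting the case $\ell(x)\le 1$'' you only explicitly treat $x\in\Omega$ and $x=s\in\tS$, not $x=s\zeta$ with $\zeta\in\Omega\setminus\{1\}$. This is harmless, since your induction step already handles any $x$ with $\ell(x)\ge 1$ once the two ingredients (the length-zero case and the case $x=s$) are in hand; but strictly speaking the base of the induction is $\ell(x)=0$, and the case $x=s$ is a separate lemma invoked at each step rather than part of the base. With that adjustment the logic is clean. The case analysis for $s\rhd$ being order-preserving and for $\min\{suz:u\le y\}\ge\min(z_0,sz_0)$ via lifting is carried out correctly.
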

\subsection{Quantum Bruhat graph}\label{sec:QBG}
We recall the quantum Bruhat graph introduced by Brenti, Fomin and Postnikov in \cite{BFP99}. By definition, a \emph{quantum Bruhat graph} $\G_{\Phi}$ is a directed graph with 
\begin{itemize}
\item vertices given by the elements of $W$; 

\item upward edges $w\rightharpoonup w s_\a$ for some $\a \in \Phi^+$ with $\ell(w s_\a)=\ell(w)+1$;

\item downward edges $w \rightharpoondown w s_\a$ for some $\a \in \Phi^+$ with $\ell(w s_\a)=\ell(w)-\< 2 \rho, \a^\vee\>+1$. 
\end{itemize}

Note that $\ell(s_\a) \leq \<2\rho,\a^\vee\>-1$ for any $\a \in \Phi^+$, so we have $\ell(w s_\a) \geq \ell(w) - \ell(s_\a) \geq \ell(w) - \< 2\rho,\a^\vee \>+1.$ Therefore the condition for downward edges can be rephrased to saying that $$\ell(w s_\a) = \ell(w) - \ell(s_\a)~\text{with}~\ell(s_\a)=\<2\rho,\a^\vee \>-1.$$ 

Following \cite{LNSSS13}, we call $\a \in \Phi^+$ to be a \emph{quantum root} if $\ell(s_\a)= \<2\rho,\a^\vee\>-1$. We have the following description of quantum roots.

\begin{lemma}\label{qroot} \cite[Lemma 4.2]{LNSSS13}
We have that $\a \in \Phi^+$ is a quantum root if and only if
\begin{enumerate}
    \item $\a$ is a long root, or
    \item $\a$ is a short root, and if $\a=\sum\limits_{\a_i \in \D} c_i \a_i$ then we have $c_i=0$ for any long simple root $\a_i$.
\end{enumerate}
\end{lemma}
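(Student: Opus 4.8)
\textbf{Plan of proof for Lemma~\ref{qroot} (characterization of quantum roots).}

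The plan is to establish both directions of the equivalence directly from the definition of quantum root, namely $\ell(s_\a) = \<2\rho,\a^\vee\> - 1$, using the combinatorics of reduced expressions and inversion sets. First I would record the elementary inequality already stated in the excerpt: for any $\a \in \Phi^+$ one has $\ell(s_\a) \le \<2\rho,\a^\vee\> - 1$, which follows because $\text{Inv}(s_\a)$ consists precisely of the positive roots $\b$ with $s_\a\b \in -\Phi^+$, and using \eqref{len2} together with $\ell(s_\a) = |\text{Inv}(s_\a)|$. Thus $\a$ is a quantum root exactly when this inequality is an equality, i.e. when $\text{Inv}(s_\a)$ is as large as possible. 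The combinatorial heart of the matter is to compute $\<2\rho,\a^\vee\> - \ell(s_\a)$ and show it vanishes precisely in the two listed cases.

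The cleanest route is via the standard reduced-word description: write $\a = w\a_i$ for some simple root $\a_i$ and some $w \in W$ with $\ell(ws_i) = \ell(w) + 1$, so that $\ell(s_\a) = 2\ell(w) + 1$ and $\<2\rho,\a^\vee\> = \<2\rho, w\a_i^\vee\> = \<2w^{-1}\rho, \a_i^\vee\>$. Hence $\a$ is a quantum root iff $\<2w^{-1}\rho - 2\rho + 2\rho, \a_i^\vee\> = 2\ell(w) + 1$, which one rewrites (using $\<2\rho,\a_i^\vee\> = 2$ for the simple coroot... careful: $\<2\rho,\a_i^\vee\> = 2$) as a condition comparing $\ell(w)$ with $\<\rho - w^{-1}\rho, \a_i^\vee\>$. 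Since $\rho - w^{-1}\rho = \sum_{\b \in \text{Inv}(w^{-1})} \b$ has the form of a sum of $\ell(w)$ positive roots, pairing with $\a_i^\vee$ gives a bound, and equality forces each such root to pair to exactly $1$ with $\a_i^\vee$. Translating this pairing constraint into root-system language — distinguishing long roots (where the relevant pairings are automatically controlled) from short roots (where the obstruction involves the long simple roots appearing in the support of $\a$) — yields exactly conditions (1) and (2). An alternative, perhaps more transparent, is the known formula $\<2\rho,\a^\vee\> - \ell(s_\a) = \#\{\b \in \Phi^+ : s_\a\b = \b,\ \b \ne \a \text{ in the relevant sense}\}$ counting positive roots fixed by $s_\a$ in a suitable way; one then checks this count is $0$ iff $\a$ satisfies (1) or (2) by a direct case analysis on the root subsystem generated by $\a$ and the simple roots.

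I would organize the case analysis by reducing to the rank-two situation: the defect $\<2\rho,\a^\vee\> - \ell(s_\a)$ is insensitive to simple roots orthogonal to $\a$, so only the parabolic subsystem spanned by the simple roots in the support of $\a$ matters, and within that one can argue irreducible type by type (types $A$, $B$, $C$, $D$, $F_4$, $G_2$ for the doubly-laced cases, with the simply-laced case being immediate since every root is long). For a long root $\a$, I expect to show the defect is always $0$ by exhibiting enough inversions of $s_\a$; for a short root, the defect counts, roughly, the long simple roots in the support, matching the stated condition $c_i = 0$ for long simple $\a_i$.

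\textbf{Main obstacle.} The conceptual steps are routine once set up; the real work — and the step most likely to require care — is the case-by-case verification in the non-simply-laced types that the defect $\<2\rho,\a^\vee\> - \ell(s_\a)$ vanishes \emph{exactly} when the support condition on long simple roots holds. In particular, one must be careful with the convention for $\a^\vee$ when $\a$ is short (so that $\<2\rho,\a^\vee\>$ is the sum of coefficients in the coroot expansion, not the root expansion), since mixing these up changes the arithmetic. Since this lemma is quoted verbatim from \cite[Lemma~4.2]{LNSSS13}, in the paper itself I would simply cite that reference rather than reproduce the type-by-type computation.
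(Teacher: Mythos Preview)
The paper does not prove this lemma at all; it is stated with the attribution \cite[Lemma 4.2]{LNSSS13} and used as a black box. You already anticipated this in your final paragraph, so on the level of what the paper actually does, your proposal matches: cite the source and move on.

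That said, since you sketched an independent argument, one step would fail as written. From $\a = w\a_i$ with $\ell(ws_i) = \ell(w)+1$ (equivalently $w\a_i \in \Phi^+$) you claim $\ell(s_\a) = 2\ell(w)+1$. This is false without a minimality condition on $w$: in type $A_3$ take $w = s_3s_2s_1$ and $i=2$; then $w\a_2 = \a_1 \in \Phi^+$ so $\ell(ws_2)=\ell(w)+1$, yet $s_\a = s_1$ has length $1$, not $7$. What you need is to choose $w$ so that a reduced expression for $w$, followed by $s_i$, followed by the reversed reduced expression for $w$, is reduced for $s_\a$; equivalently, take $w$ of minimal length with $w\a_i = \a$ (this is where one uses that $\ell(s_\a)$ is odd and splits a palindromic reduced word). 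With that correction your computation of the defect $\<2\rho,\a^\vee\> - \ell(s_\a)$ goes through. There is also a small slip in the identity for $\rho - w^{-1}\rho$: with the paper's convention $\text{Inv}(x) = \{\a \in \Phi^+ : x\a \in -\Phi^+\}$ one has $\rho - w^{-1}\rho = \sum_{\b \in \text{Inv}(w)} \b$, not $\text{Inv}(w^{-1})$. Neither issue affects the overall shape of the argument, and the case-by-case endgame you describe is indeed how \cite{LNSSS13} proceeds.
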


Here for simply laced root systems we consider all roots to be long. Thus in a simply laced type, all roots are quantum. Examples of quantum root, in general, include the simple roots as well as the highest root.

\medskip
The weight of an upward edge is defined to be $0$ and the weight of a downward edge $w \rightharpoondown w s_{\a}$ is defined to be $\a^{\vee}$. The \emph{weight of a path} in $\G_{\Phi}$ is defined to be the sum of weights of the edges in the path. For any $x,y\in W$, we denote by $d_\G(x, y)$ the minimal length among all paths in $\G_{\Phi}$ from $x$ to $y$. 

\begin{lemma}\label{wt-x-y} \cite[Theorem 2]{Pos05}, \cite[Lemma 6.7]{BFP99}
Let $x,y\in W$. Then 
\begin{enumerate}
\item There exists a directed path in $\G_{\Phi}$ from $x$ to $y$.

\item All the shortest paths in $\G_{\Phi}$ from $x$ to $y$ have the same weight, which we denote by $\text{wt}(x, y)$. 

\item Any path in $\G_{\Phi}$ from $x$ to $y$ has weight $\ge \wt(x,y)$.
\end{enumerate}
\end{lemma}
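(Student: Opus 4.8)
The plan is to treat the three assertions in turn, reducing (2) and (3) to a single statement about the weights of directed paths. \emph{Part (1)} is elementary: for $x,y\in W$ fix any word $x\i y=s_{i_1}\cdots s_{i_k}$ and set $z_0=x$, $z_j=z_{j-1}s_{i_j}$, so $z_k=y$. At each step either $\ell(z_j)=\ell(z_{j-1})+1$, giving an upward edge $z_{j-1}\rightharpoonup z_j$, or $\ell(z_j)=\ell(z_{j-1})-1$; in the latter case, since $\<2\rho,\a_{i_j}^\vee\>=2$, we have $\ell(z_j)=\ell(z_{j-1})-\<2\rho,\a_{i_j}^\vee\>+1$, i.e. $z_{j-1}\rightharpoondown z_j$ is a downward edge. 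Either way this produces a directed path from $x$ to $y$ in $\G_\Phi$.

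Next I would record the \emph{length--weight identity}. Let $P\colon x=z_0\to z_1\to\cdots\to z_m=y$ be any directed path and let $\b_j$ be the weight of its $j$-th edge ($\b_j=0$ on an upward edge, $\b_j=\a^\vee$ on a downward edge $z_{j-1}\rightharpoondown z_{j-1}s_\a$). Comparing lengths across each edge gives, in both cases uniformly, $\ell(z_j)-\ell(z_{j-1})=1-\<2\rho,\b_j\>$; summing over $j$,
\begin{equation*}
m \;=\; \ell(y)-\ell(x)+\<2\rho,\wt(P)\>,\qquad \wt(P):=\sum_{j=1}^{m}\b_j .
\end{equation*}
Thus, for fixed $x,y$, the length of $P$ depends on $P$ only through $\<2\rho,\wt(P)\>$ and increasingly so; in particular $P$ is a shortest path precisely when $\<2\rho,\wt(P)\>$ is minimal among path weights. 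Consequently it suffices to prove
\begin{equation*}
(\star)\qquad \text{there is a directed path }P_0\colon x\to y\text{ with }\wt(P_0)\le\wt(P)\text{ for every directed path }P\colon x\to y ,
\end{equation*}
because $\wt(P)-\wt(P_0)$ is then a non-negative integral combination of simple coroots, so $\<2\rho,\a_i^\vee\>=2>0$ forces $\wt(P)=\wt(P_0)$ as soon as $\<2\rho,\wt(P)\>=\<2\rho,\wt(P_0)\>$. Hence $P_0$ realises the minimum of $\<2\rho,\wt(\,\cdot\,)\>$, so it is a shortest path, while every shortest path has weight $\wt(P_0)$: this is (2) with $\wt(x,y):=\wt(P_0)$, and (3) is exactly the inequality in $(\star)$.

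The remaining step, proving $(\star)$, is the substantive core and the expected main obstacle; it is the content of \cite[Lemma 6.7]{BFP99} and \cite[Theorem 2]{Pos05}. I would argue by induction on $d_\G(x,y)$ (finite by Part (1)), the key input being a local \emph{diamond} property of $\G_\Phi$: given two distinct directed edges $x\to x'$ and $x\to x''$ out of a common vertex, there exist a vertex $z$ and directed paths $x'\rightsquigarrow z$, $x''\rightsquigarrow z$ for which the two concatenations $x\to x'\rightsquigarrow z$ and $x\to x''\rightsquigarrow z$ have equal weight. Granting this, one merges any two directed paths from $x$ to $y$ through a succession of such diamonds; a shortening variant of the same mechanism (shortcutting any ``non-geodesic corner'' by a path of weight no larger in the dominance order) then funnels every path to a shortest one whose weight is dominated by, and by the previous paragraph coincides with the $\<2\rho,\cdot\>$-minimum over, all path weights, giving $(\star)$. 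The proof of the diamond property itself is a case analysis in the root system, using reflection identities, the linearity of $\a^\vee\mapsto\<2\rho,\a^\vee\>$, and the classification of quantum roots recalled in \Cref{qroot}; this case analysis is where the genuine work lies. (Alternatively, $(\star)$ may be deduced from known properties of the Bruhat order on the affine Weyl group, via the Lam--Shimozono dictionary relating $\G_\Phi$ to the translations $t^\l$ with $\l$ deep in the dominant chamber.)
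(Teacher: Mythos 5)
This lemma is quoted in the paper from \cite[Theorem 2]{Pos05} and \cite[Lemma 6.7]{BFP99}; the paper supplies no proof of its own. Your treatment of part (1), the length--weight identity $m=\ell(y)-\ell(x)+\<2\rho,\wt(P)\>$ (which is exactly the identity the paper later invokes in \cref{sec:wt=r}), and the reduction of (2) and (3) to the single claim $(\star)$ are all correct, and $(\star)$ is precisely the content of the two cited results, which you --- like the paper --- do not reprove but only sketch via the diamond/merging argument. So relative to the paper there is no gap; just be aware that the substantive combinatorial work (the case analysis establishing the local confluence of $\G_\Phi$) remains entirely in the references.
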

\section{Some combinatorial property}
\subsection{Monotonicity of weight function}

Let us define the function $\wt:W \rightarrow X_*(T)$ by $\wt(x):=\wt(x,1)$.
\begin{lemma}\label{monotone-wt}
Let $x_1,x_2 \in W$; if $x_1 \leq x_2$ in Bruhat order, then $ \wt(x_1) \leq \wt(x_2)$ in dominance order.  
\end{lemma}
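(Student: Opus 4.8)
The plan is to reduce the statement to the case where $x_1 \lessdot x_2$ is a covering relation in the Bruhat order, since then the general case follows by transitivity of the dominance order along a saturated chain from $x_1$ to $x_2$. So suppose $x_1 \lessdot x_2$, say $x_2 = x_1 s$ for a reflection $s = s_\beta$ with $\ell(x_2) = \ell(x_1)+1$. I want to produce a path in $\G_\Phi$ from $x_2$ to $1$ whose weight is $\le \wt(x_1)$, and then invoke \cref{wt-x-y}(3), which says every path from $x_2$ to $1$ has weight $\ge \wt(x_2)$; this yields $\wt(x_2) \le \wt(x_1)$ — wait, that is the wrong direction, so let me instead aim the other way: I will build a path from $x_1$ to $1$ of weight $\le \wt(x_2)$, giving $\wt(x_1) \le \wt(x_2)$ by \cref{wt-x-y}(3) applied at $x_1$. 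Concretely, take a shortest path $P$ in $\G_\Phi$ from $x_2$ to $1$, of weight exactly $\wt(x_2)$. The idea is to prepend to $P$ a single edge from $x_1$ to $x_2$: if such an edge exists it is necessarily an \emph{upward} edge (since $\ell(x_2) = \ell(x_1)+1 > \ell(x_1)$ and the only way to increase length by $1$ along an edge is an up-edge), hence has weight $0$, and then the concatenated path $x_1 \rightharpoonup x_2 \rightsquigarrow 1$ has weight $\wt(x_2)$, forcing $\wt(x_1) \le \wt(x_2)$.

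The key point to verify is therefore: \emph{if $x_1 \lessdot x_2$ then there is an edge $x_1 \rightarrow x_2$ in $\G_\Phi$}. An edge of $\G_\Phi$ from $x_1$ to $x_1 s_\gamma$ ($\gamma \in \Phi^+$) exists iff either $\ell(x_1 s_\gamma) = \ell(x_1)+1$ (up-edge, always allowed for a length-increasing reflection) or $\ell(x_1 s_\gamma) = \ell(x_1) - \langle 2\rho,\gamma^\vee\rangle + 1$ (down-edge). Writing $x_2 = x_1 s_\gamma$ with $\gamma = x_1^{-1}\beta \cdot(\pm 1)$ chosen positive, the covering relation $x_1 \lessdot x_2$ gives precisely $\ell(x_1 s_\gamma) = \ell(x_1)+1$, so the up-edge $x_1 \rightharpoonup x_2$ is present in $\G_\Phi$ with weight $0$. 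This is exactly the edge we need, and it is the only nontrivial ingredient.

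Putting it together: for a covering relation $x_1 \lessdot x_2$, let $P: x_2 \rightsquigarrow 1$ be a shortest path with $\wt(P) = \wt(x_2)$; then $(x_1 \rightharpoonup x_2) \cup P$ is a path from $x_1$ to $1$ of weight $0 + \wt(x_2) = \wt(x_2)$, so by \cref{wt-x-y}(3), $\wt(x_1) = \wt(x_1,1) \le \wt(x_2)$ in dominance order (note all summands added are coroots $\gamma^\vee$, which are non-negative combinations of positive coroots — but more simply, dominance order on $X_*(T)_\BQ$ is by non-negative rational combinations of positive coroots, hence compatible). For general $x_1 \le x_2$, choose a saturated chain $x_1 = y_0 \lessdot y_1 \lessdot \cdots \lessdot y_k = x_2$ and apply the covering case $k$ times, using transitivity of $\le$ on $X_*(T)_\BQ$.

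The main obstacle is essentially bookkeeping: making sure the direction of the inequality from \cref{wt-x-y}(3) is used correctly (it bounds path weights \emph{below} by $\wt(\text{source},\text{target})$, so one must anchor the constructed path at the \emph{smaller} element $x_1$ and route it through the bigger one $x_2$), and confirming that a Bruhat covering relation indeed furnishes an \emph{up}-edge in the quantum Bruhat graph — which is immediate from the definitions once one recalls that $x_1 \lessdot x_2$ means $x_2 = x_1 s_\gamma$ for a reflection with $\ell(x_2)=\ell(x_1)+1$. No delicate estimates on $\langle 2\rho,\gamma^\vee\rangle$ or quantum-root conditions are needed, since we only use up-edges.
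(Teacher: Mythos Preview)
Your proof is correct and takes a genuinely different route from the paper's.

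The paper argues via the affine Weyl group and Newton points: for superregular dominant $\lambda$ one has $t^\lambda x_1 \geq t^\lambda x_2$ in $\wtd W$ whenever $x_1 \leq x_2$ in $W$, whence $\nu_{t^\lambda x_1} \geq \nu_{t^\lambda x_2}$ by the description \eqref{max-nu} of the maximal Newton point; then Mili\'cevi\'c's formula $\nu_{t^\lambda x} = \lambda - \wt(x)$ (valid in the superregular regime) translates this into $\wt(x_1) \leq \wt(x_2)$. Your argument, by contrast, stays entirely inside the finite Weyl group and the quantum Bruhat graph: a Bruhat cover $x_1 \lessdot x_2$ is literally an upward edge of $\Gamma_\Phi$ of weight $0$, so prepending it to a shortest path from $x_2$ to $1$ gives a path from $x_1$ to $1$ of weight $\wt(x_2)$, and \cref{wt-x-y}(3) finishes.

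Your approach is more elementary and self-contained: it uses only the defining combinatorics of $\Gamma_\Phi$ and \cref{wt-x-y}, and avoids invoking the (comparatively deep) Newton-point formula as a black box. The paper's approach, while logically sound, imports the full strength of \cite[Theorem 3.2]{Milicevic21} to establish what is really a one-line graph-theoretic fact. One minor cosmetic point: the digression in your write-up where you first set up the path in the wrong direction and then self-correct should be trimmed in a final version, but the mathematics is fine.
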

\begin{proof}
Assume that $\lambda \in X_*(T)^+$ is regular. We first note that $x_1 \leq x_2$ implies $t^\l x_1 \ge t^\l x_2$. It suffices to show this when $x_2$ is a cover of $x_1$, i.e. $x_2=x_1s_\a$ for some $\a \in \Phi^+$ such that $\ell(x_2)=\ell(x_1)+1$. Then $t^\l x_2=t^\l x_1 \cdot s_\a$, hence the two elements in question are comparable. Now, \cref{len1} shows that $$\ell(t^\l x_2)=\ell(t^\l) -\ell(x_2)=\ell(t^\l)-\ell(x_1)-1=\ell(t^\l x_1)-1.$$
Hence $t^\l x_1 \ge t^\l x_2$.

Recall the description of maximal Newton point associated to $w \in \wtd W$ given in \cite{ Vieh14}.
\begin{equation}\label{max-nu}
    \nu_w=\max\{\nu(u):u\leq w, u \in \widetilde{W}\}.
\end{equation}

By applying this, we get $\nu_{t^\l x_1} \ge \nu_{t^\l x_2}$. Let us now assume that $\l$ is actually superregular; hence the formula for maximal Newton point in \cite[Theorem 3.2]{Milicevic21} applies in our case and it gives $\nu_{t^\l x_i}= \l - \wt(x_i^{-1})= \l - \wt(x_i)$ for $i=1,2$. Thus we get $\l - \wt(x_1)\geq \l - \wt(x_2)$, whence $\wt(x_1) \le \wt(x_2)$. 
\end{proof}

\subsection{Reduction to dominant chamber alcoves}\label{sec:reduction}
We will now reduce our calculation of maximal Newton point of an arbitrary element of $\widetilde{W}$ to that of a suitable element lying in the dominant chamber. 

\begin{proposition}\label{reduction}
If $\lambda$ is dominant regular, then $[b_{ut^{\lambda}v}]=[b_{t^{\lambda}(v \lhd u)}]$.
\end{proposition}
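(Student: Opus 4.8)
The plan is to relate the maximal Newton point of $ut^\lambda v$ to that of an element in the dominant chamber by using the characterization $\nu_w = \max\{\nu(x) : x \leq w,\ x \in \wtd W\}$ from \eqref{max-nu}, together with the Demazure-product machinery from \cref{sec:Demazure}. The key observation is that both $\nu_w$ and the set of elements $\leq w$ are controlled by the closure $\overline{\breve I w \breve I}$, and the Demazure product governs how such closures multiply. First I would pass from $ut^\lambda v$ to a cleaner form: write $w = ut^\lambda v$ and note that since $\lambda$ is dominant regular, \eqref{len1} gives $\ell(w) = \ell(u) + \langle 2\rho,\lambda\rangle - \ell(v)$, so the expression $ut^\lambda v$ is "length-additive on the outside" in the sense that $\ell(u \cdot t^\lambda v) = \ell(u) + \ell(t^\lambda v)$ and $\ell(t^\lambda \cdot v) = \ell(t^\lambda) - \ell(v)$ (the latter because $t^\lambda v \in {}^\BS\wtd W$-type decompositions are available for regular $\lambda$).

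The heart of the argument is a Demazure-product identity. I would show that $\nu_w$ depends only on the Demazure product $u^{-1} * w * \text{(something)}$, or more precisely use that $\sigma$-conjugacy by $\breve I$-cosets interacts with the Demazure product so that $[b_w]$ is unchanged under replacing $w$ by $z^{-1} * w * \sigma(z)$ for appropriate $z$ — but since $\bG$ is split, $\sigma$ acts trivially on $\wtd W$, so this becomes conjugation-like: $[b_{z * w * z^{-1}}] = [b_w]$ when the lengths add up. Taking $z = u$, the claim would reduce to identifying $u * (ut^\lambda v) * u^{-1}$ — no wait, that is not quite a $\sigma$-conjugation statement. Let me instead use the known fact (e.g. from He's work on the generic $\sigma$-conjugacy class, cf. the reasoning around \eqref{max-nu}) that $[b_w]$ is invariant under cyclic shift: if $w = w_1 w_2$ with $\ell(w) = \ell(w_1) + \ell(w_2)$ then $[b_{w_1 w_2}] = [b_{w_2 w_1}]$, and more generally under the Demazure analogue. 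Applying this to $w = u \cdot (t^\lambda v)$ gives $[b_{ut^\lambda v}] = [b_{t^\lambda v u}]$. Then I would analyze $t^\lambda v u$: it is not in the dominant chamber in general, but using $\ell(t^\lambda \cdot vu)$ versus length cancellation, I would apply the downward Demazure product $v \lhd u$ — the point being that $t^\lambda(vu)$ and $t^\lambda(v \lhd u)$ have the same closure-of-double-coset behavior up to the part of $vu$ that can be "absorbed" into $t^\lambda$'s alcove, precisely because $v \lhd u$ is the minimal element in $\{v x : x \leq u\}$ and the extra reflections in $vu$ beyond $v \lhd u$ correspond to length-decreasing moves that do not enlarge the Newton point.

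Concretely, the key step is: $[b_{t^\lambda(vu)}] = [b_{t^\lambda (v\lhd u)}]$. To prove this I would sandwich using \eqref{max-nu} and \cref{Dem prod prop}(1): on one hand $v \lhd u \leq vu$ is false in general, so instead I would use that $t^\lambda * (vu) $-type manipulations and the cyclic-shift invariance again, writing $vu = (v \lhd u) \cdot x$ with $x \leq u$ and $\ell$-additivity failing in a controlled way, then showing the discrepancy is killed after $\sigma$-conjugation by $x$. The cleanest route: $[b_{t^\lambda v u}] = [b_{u t^\lambda v}]$ was the first reduction; symmetrically $[b_{u t^\lambda v}] = [b_{t^\lambda v \cdot u}]$, and now cyclic-shift lets me also write this as $[b_{(t^\lambda v) u}]$ and absorb $u$ from the right into $t^\lambda v$ via the right downward Demazure product, landing on $(t^\lambda v) \lhd$-type element, which because $\lambda$ is regular dominant equals $t^\lambda (v \lhd u)$.

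\textbf{Main obstacle.} The hard part will be the last identity $[b_{t^\lambda vu}] = [b_{t^\lambda(v \lhd u)}]$: proving that replacing $vu$ by the downward Demazure product $v \lhd u$ — which generally changes the element and lowers length — does not change the \emph{maximal} Newton point. This requires more than monotonicity (\cref{monotone-wt} handles only the finite-Weyl-group weight function); one needs that every element $\leq t^\lambda vu$ whose Newton point is maximal already lies $\leq t^\lambda(v \lhd u)$, equivalently that the "cyclic-shift + downward-Demazure" operation preserves $[b_-]$. I expect this to follow from He's description of $[b_w]$ via the Demazure product $w * \sigma(w) * \sigma^2(w) * \cdots$ (which for split $\bG$ and the relevant setup stabilizes), combined with the monoid-action property \cref{Dem prod prop}(2); but making this precise — in particular checking the length bookkeeping so that $v \lhd u$ appears rather than some larger element, and that regularity of $\lambda$ is exactly what makes $(t^\lambda v)\lhd u = t^\lambda(v\lhd u)$ — is where the real work lies, and it is plausible the author instead invokes a ready-made "class polynomial / generic Newton point is invariant under conjugacy-by-Demazure" lemma from the literature.
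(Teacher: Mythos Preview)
Your instincts are right—length-additivity plus a cyclic-shift argument is the route—but you take a wrong turn that manufactures the obstacle you worry about. The cyclic-shift statement you invoke, $[b_{w_1 w_2}] = [b_{w_2 w_1}]$ for length-additive $w_1 w_2$, is not the correct one: the right-hand side must carry the \emph{Demazure} product, $[b_{w_1 w_2}] = [b_{w_2 * w_1}]$. These differ precisely when $\ell(w_2 w_1) < \ell(w_2) + \ell(w_1)$, which is the typical situation for $w_1 = u$ and $w_2 = t^\lambda v$. The paper proves the Demazure version directly at the group level: since $ut^\lambda v = u * (t^\lambda v)$ by length-additivity, one has $\overline{\breve \CI ut^\lambda v \breve \CI} = \overline{\breve \CI u \breve \CI} \cdot \overline{\breve \CI t^\lambda v \breve \CI}$, and a one-line $\sigma$-conjugation (send $g_1 g_2 \mapsto g_2 \sigma(g_1)$, using that $\sigma$ fixes each closure in the split case) shows
\[
\breve G \cdot_\sigma \bigl(\overline{\breve \CI u \breve \CI} \cdot \overline{\breve \CI t^\lambda v \breve \CI}\bigr)
= \breve G \cdot_\sigma \bigl(\overline{\breve \CI t^\lambda v \breve \CI} \cdot \overline{\breve \CI u \breve \CI}\bigr)
= \breve G \cdot_\sigma \overline{\breve \CI (t^\lambda v) * u \,\breve \CI}.
\]
Equal $\sigma$-saturations of the closures force equal generic classes via \eqref{max-nu}, giving $[b_{ut^\lambda v}] = [b_{(t^\lambda v) * u}]$.

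With $(t^\lambda v) * u$ in hand, your ``main obstacle'' $[b_{t^\lambda vu}] = [b_{t^\lambda(v\lhd u)}]$ never arises: one only needs the identity $(t^\lambda v) * u = t^\lambda(v \lhd u)$ for $\lambda$ dominant regular, and this is elementary. By the monoid-action property (the $\lhd$-analogue of \cref{Dem prod prop}(2)) it suffices to treat $u = s$ a simple reflection; then \eqref{len1} gives $\ell(t^\lambda vs) = \ell(t^\lambda v) + 1$ if and only if $\ell(vs) = \ell(v) - 1$, which is exactly the dichotomy governing whether $(t^\lambda v)*s$ equals $t^\lambda vs$ or $t^\lambda v$, matching $t^\lambda(v \lhd s)$ in both cases. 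So the whole proposition is two short steps—closure-level cyclic shift followed by a Demazure-product identity—and no iterated Frobenius powers, class polynomials, or comparison of $t^\lambda vu$ with $t^\lambda(v\lhd u)$ are needed. Your remark at the end about ``$(t^\lambda v)\lhd u = t^\lambda(v\lhd u)$'' is close but has the wrong operation on the left; replacing that $\lhd$ by $*$ is exactly the missing identity.
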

\begin{proof}
Note that by \cref{len1}, $\ell(ut^\l v)=\ell(u)+\ell(t^\l v)$, hence $ut^\l v = u*(t^\l v)$. Thus $\overline{\breve I ut^\l v \breve I}=\overline{\breve I u \breve I}\cdot \overline{\breve I t^\l v \breve I}$.

Now recall that $\breve I$ is $\sigma$-stable, and since $\bG$ is split, $\sigma$ acts trivially on $\wtd W$ too. Hence $\s$ fixes $\overline{\breve I w \breve I}$ for any $w \in \wtd W$. Therefore, if $w=w_1w_2$ is an element with $w_1\in \overline{\breve I u \breve I}, w_2 \in \overline{\breve I t^\l v \breve I}$, then $w_1^{-1} \cdot_\s w=w_2w_1$. This shows that $\breve G \cdot_{\sigma} (\overline{\breve I u \breve I} \cdot \overline{\breve I t^\l v \breve I}) \subset \breve G\cdot_{\sigma} (\overline{\breve I t^\l v \breve I} \cdot \overline{\breve I u \breve I})$. One obtains the other inclusion in a similar way. This shows that $\breve G \cdot_{\sigma} (\overline{\breve I u \breve I} \cdot \overline{\breve I t^\l v \breve I}) = \breve G\cdot_{\sigma} (\overline{\breve I t^\l v \breve I} \cdot \overline{\breve I u \breve I}).$ 

Hence, we conclude that $\breve G \cdot_\s \overline{\breve I ut^\l v \breve I}=\breve G \cdot_\s \overline{\breve I (t^\l v)*u \breve I}.$
Since $G\cdot_{\sigma}\overline{\breve I w \breve I}$ is the set of $\sigma$-conjugacy classes intersecting $\overline{\breve I w \breve I}$, this shows that \begin{equation}\label{red}
    [b_{ut^\l v}]=[b_{(t^\l v)*u}].
\end{equation}

Thus it suffices to show that

(a) if $\l$ is dominant regular, then $(t^\l v)*u = t^\l (v \lhd u)$.

By \cref{Dem prod prop}, we just need to argue the case when $u$ is a simple reflection $s\in \BS$. Then this statement is equivalent to the assertion that $\ell(vs)=\ell(v)-1$ if and only if $\ell(t^\l vs)=\ell(t^\l v)+1$. This follows directly from a computation using \cref{len1}. Hence we are done.
\end{proof}
\begin{corollary}\label{wt}
Let $x,y \in W$. Then $\wt(x,y)=\wt(x^{-1}\lhd y)$.
\end{corollary}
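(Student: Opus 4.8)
The plan is to read off the identity by computing one maximal Newton point in two ways: via \Cref{reduction} on the one hand, and via Mili\'cevi\'c's formula for the generic Newton point on the other.

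First I would fix a dominant \emph{superregular} coweight $\l$; the choice is irrelevant, since $\wt(\cdot,\cdot)$ is defined purely in terms of $\G_\Phi$. Given $x,y\in W$, put $u=y$, $v=x^{-1}$, and $w=ut^\l v=y\,t^\l x^{-1}$. By Mili\'cevi\'c's formula \cite[Theorem 3.2]{Milicevic21} (applicable because $\l$ is superregular),
\begin{equation*}
    \nu_w=\l-\wt(v^{-1},u)=\l-\wt(x,y).
\end{equation*}
On the other hand, $\l$ is dominant regular, so \Cref{reduction} gives $[b_w]=[b_{t^\l(v\lhd u)}]$ and hence $\nu_w=\nu_{t^\l(x^{-1}\lhd y)}$. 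Set $z=x^{-1}\lhd y$; by \Cref{Dem prod}(3) we have $z=x^{-1}v''$ for some $v''\le y$, so $z\in W$, and applying \cite[Theorem 3.2]{Milicevic21} again, now to $t^\l z$ (with trivial left factor and right factor $z$), gives $\nu_{t^\l z}=\l-\wt(z^{-1})=\l-\wt(z)=\l-\wt(x^{-1}\lhd y)$, where the middle equality is the symmetry $\wt(z)=\wt(z^{-1})$ of the quantum Bruhat graph already used in the proof of \Cref{monotone-wt}. Comparing the two expressions for $\nu_w$ and cancelling $\l$ yields $\wt(x,y)=\wt(x^{-1}\lhd y)$.

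The only ingredient here that is not bookkeeping is the symmetry $\wt(z)=\wt(z^{-1})$, and that is exactly what the proof of \Cref{monotone-wt} already relies on; without it the same argument still produces the equivalent statement $\wt(x,y)=\wt((x^{-1}\lhd y)^{-1})$. A small point worth a line in the write-up is that \cite[Theorem 3.2]{Milicevic21} is being applied to the decompositions $w=y\,t^\l x^{-1}$ and $t^\l z$ as they stand, rather than to their canonical forms with $t^\mu v'\in{}^{\BS}\wtd W$; this is harmless because $\l-\wt(v^{-1},u)$ depends only on $w$, but if one prefers one can normalise first and fold the correction into the $\G_\Phi$ computation.
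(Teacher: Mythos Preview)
Your proof is correct and follows essentially the same route as the paper: pick a superregular $\l$, compute $\nu_w$ both via \Cref{reduction} and via \cite[Theorem 3.2]{Milicevic21}, and cancel $\l$. The only cosmetic difference is that the paper applies the argument to $w=xt^\l y$, obtains $\wt(y^{-1},x)=\wt(y\lhd x)$, and then substitutes $(x,y)\mapsto(y,x^{-1})$ at the end, whereas you chose $u=y$, $v=x^{-1}$ from the outset so that no relabeling is needed; both proofs invoke the symmetry $\wt(z)=\wt(z^{-1})$ at the same point.
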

\begin{proof}
Suppose that $\l \in X_*(T)^+$ is superregular. Applying the formula established in \cite[Theorem 3.2]{Milicevic21} to $w:=x t^\l y$ and $w':=t^\l (y \lhd x)$, we get that $\nu_w=\l - \wt(y^{-1},x)$ and $\nu_{w'}=\l - \wt((y \lhd x)^{-1})=\l-\wt(y \lhd x)$. Since $\nu_w=\nu_{w'}$ by \cref{reduction}, we get $\wt(y^{-1},x)=\wt(y \lhd x)$. Now replacing the pair $(x,y)$ by $(y,x^{-1})$, we get the conclusion. 
\end{proof}

\subsection{Reformulation of weight in quantum Bruhat graph}
We now give another interpretation of weight of a shortest path in the quantum Bruhat graph.
\begin{definition}\label{RQRD}
For an element $x \in W$, we say that $x=s_{\beta_1}\cdots s_{\beta_k}$ is a \textit{reduced quantum reflection decomposition} if
\begin{enumerate}
    \item $\{\b_i:1 \leq i \leq k\}$ is a collection of (not necessarily simple) quantum roots, i.e. $l(s_{\b_i}) = \< 2\rho,\b_i^\vee\> -1.$
    \item $\ell(x)=\sum_{i=1}^k \ell(s_{\b_i})$.
    \item Subject to the first two conditions, $k$ is minimal.
\end{enumerate}
In this case, we say that $k$ is the minimal reduced length of $x$ in terms of reflections associated to quantum roots, and write $\ell_\downarrow(x)=k$. Note that $d_{\G}(x,1)=\ell_\downarrow(x)$.
\end{definition}

The choice of notation is suggested from the fact that 
\begin{center}
    \{Reduced quantum reflection decompositions for $x$\} 
    
    \big\updownarrow 
    
    \{Shortest paths in $\G_\Phi$ from $x$ to $1$ that uses \textit{only downwards edges}\}
\end{center}
By \cite[Proposition 4.11]{VM20}, the weight of any path in the later set is equal to $\wt(x)$. Therefore, the problem of computing $\wt(x)$ reduces to one of finding a suitable decomposition for $x$: if $x=s_{\b_1}\cdots s_{\b_k}$ is such a decomposition then $\wt(x)=\sum_{i=1}^k \b_i^\vee$. In view of \cref{wt}, we can thus reformulate $\wt(x, y)$ in terms of a decomposition of $x^{-1} \lhd y$.

\section{Formula for the maximal Newton point}\label{sec:gen-nu}
The goal of this section is to prove the following result. 
\begin{theorem}\label{Generic-nu}
Assume that $\bG$ is a quasi-simple split group of rank $n$. Let $w=t^\l x$ be an element of its Iwahori-Weyl group such that $\l$ is dominant and $\text{depth}(\l) > \Xi $, where $\Xi$ is an integer depending on $n$. Then the maximal Newton point associated to $w$ is given by $\nu_w=\l-\wt(x)$.
\end{theorem}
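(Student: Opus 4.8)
The plan is to establish \Cref{Generic-nu} by reducing to the superregular case of \cite[Theorem 3.2]{Milicevic21}, using the characterization \eqref{max-nu} of $\nu_w$ as $\max\{\nu(u): u \le w\}$ together with an observation from \cite{Rapoport05} that lets one ``walk down'' in Bruhat order from $t^\l x$ to a pure translation element $t^\nu$ while controlling $\nu$. More precisely, I would first use the fact (noted in \cite{Rapoport05}) that for any $w = t^\l x \in \wtd W$ with $x \ne 1$, there is a simple affine reflection $s$ such that $sw$ (or $ws$) is still $\le w$ and strictly reduces $\ell(x)$ in an appropriate sense; iterating produces a chain $t^\l x \ge w_1 \ge w_2 \ge \cdots \ge t^{\l'}$ terminating in a translation element below $w$. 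Since $\nu_{t^{\l'}} = $ dominant representative of $\l'$, and $\nu_w \ge \nu_{t^{\l'}}$ by \eqref{max-nu}, this gives a lower bound $\nu_w \ge \l - (\text{something})$; the content is to show the ``something'' equals $\wt(x)$ precisely, and that no element $u \le w$ has strictly larger Newton point.

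The key step is the upper bound, and here I would argue as follows. For the upper bound $\nu_w \le \l - \wt(x)$, one notes that any $u \le w = t^\l x$ satisfies (by the standard estimates on how translation parts behave under Bruhat order, plus \Cref{monotone-wt} and \Cref{wt}) that its Newton point is dominated by $\l$ minus a sum of coroots coming from a quantum path, and the minimal such sum along paths from $x$ to $1$ in $\G_\Phi$ is exactly $\wt(x)$ by \Cref{wt-x-y}. The depth hypothesis $\text{depth}(\l) > \Xi$ enters precisely to guarantee that the combinatorics of ``walking down'' stays in the regime where translation parts remain regular dominant, so that the length formula \eqref{len1} applies at every step and one never accidentally enters a chamber wall that would destroy the bookkeeping; the linear-in-$n$ bound $\Xi$ should come from bounding $\ell(w_0)$-type quantities (or rather $\<2\rho, \theta^\vee\>$ and the number of steps needed) by a linear function of $n$ in the quasi-simple case, using the explicit classification of root systems. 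Finally, once both inequalities $\l - \wt(x) \le \nu_w \le \l - \wt(x)$ are in hand under the depth hypothesis $\text{depth}(\l) > \Xi$, we would invoke \cite[Theorem 3.2]{Milicevic21} to confirm that the answer we obtain is consistent with (and reduces to) the known superregular formula, which both validates the identification of the limiting translation element and shows that no gap was introduced.

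I expect the main obstacle to be making the ``walk down'' procedure precise and quantitative: one must iterate the Bruhat-order observation from \cite{Rapoport05} in a way that (i) is guaranteed to terminate at a translation element, (ii) keeps the translation part regular dominant at every intermediate step, and (iii) has total ``coroot displacement'' controlled by $\wt(x)$ rather than something larger. Controlling (iii) is the genuinely delicate point, since a naive walk could overshoot; the resolution should be to choose the reflections so that the chain of translation parts realizes a shortest path in the quantum Bruhat graph, linking the procedure directly to $d_\G(x,1)$ and hence to $\wt(x)$ via \Cref{wt-x-y}(2). The technical lemma about downward Demazure products alluded to in the introduction is presumably what packages (iii) cleanly, and translating it into the depth bound $\Xi$ is where the second (and dominant) linear restriction on $\text{depth}(\l)$ arises.
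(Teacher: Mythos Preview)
Your lower-bound strategy is essentially correct and matches the paper: using a reduced quantum reflection decomposition $x = s_{\b_1}\cdots s_{\b_k}$ together with the Rapoport observation $t^{\l-\b^\vee} \le t^\l s_\b$ (\Cref{Base case}), one shows by induction that $t^\l x \ge t^{\l-\wt(x)}$, hence $\nu_w \ge \l - \wt(x)$. There is no overshooting issue here, and no Demazure products are needed; the walk realizes a shortest path in $\G_\Phi$ by construction.

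The gap is in your upper bound. You assert that ``any $u \le w$ has its Newton point dominated by $\l$ minus a sum of coroots coming from a quantum path'' via ``standard estimates on how translation parts behave under Bruhat order,'' but no such estimates exist---this is precisely the hard direction, and your formulation is circular. Neither \Cref{monotone-wt} nor \Cref{wt} controls the Newton point of an arbitrary $u \le t^\l x$; they only relate weights of elements already known to be comparable in $W$. You also misplace the role of the Demazure-product lemma: it is not a tool for steering the walk-down, but the core mechanism for the upper bound.

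What the paper actually does for $\nu_w \le \l - \wt(x)$ is: (i) show under the depth bound that $\nu_w$ is necessarily realized by a pure translation $t^{y\g} \le w$ (\Cref{translations}); (ii) prove a \emph{lifting equivalence} $t^\l x \ge t^{y\g} \Leftrightarrow t^{\l-2\rho^\vee} x \ge t^{-2\rho^\vee} \rhd t^{y\g}$ (\Cref{lifting}); (iii) compute, via a delicate analysis of the Demazure product, that $t^{-2\rho^\vee} \rhd t^{y\g} = t^{\g-\mu_y}y^{-1}$ with $\mu_y$ depending only on $y$, not on $\g$ (\Cref{independence})---this is where the second linear constraint $\CS = \<\theta,2\rho^\vee\>$ enters; (iv) use the $\g$-independence to multiply both sides by a superregular $t^{\l'}$ length-additively, then reverse the lifting, obtaining $t^{\l+\l'} x \ge t^{y(\g+\l')}$; (v) \emph{now} apply \cite[Theorem 3.2]{Milicevic21} in the superregular regime to conclude $\l+\l'-\wt(x) \ge \g+\l'$, hence $\g \le \l - \wt(x)$. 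So Mili\'cevi\'c's theorem is not a consistency check but the engine of the upper bound; the Demazure-product lemma is what allows you to transport the inequality $t^\l x \ge t^{y\g}$ into the superregular range without losing information.
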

For each Cartan type, the lower bound $\Xi$ for the depth mentioned above is given in the following table.

\begin{center}\label{final bd}
\begin{tabular}{ |c|c|c|c|c|c|c|c|c|} 
 \hline
 Type  & $A_n$ & $B_n/C_n$ & $D_n$ & $E_6$ & $E_7$ & $E_8$ & $F_4$ & $G_2$  \\
 \hline
 $\Xi$ & $3n+1$ & $6n-2$ & $6n-6$ & $23$ & $33$ & $57$ & $23$ & $9$  \\
 \hline
\end{tabular}
\end{center}
\begin{remark}\label{reduction1}
We note that one can handle the case of a split connected reductive group $\bG$ via a routine reduction procedure, which we discuss now. In that case, we have $$W=W_1 \times \cdots \times W_l,$$ where $W_i$ are irreducible Weyl groups. This gives rise to a partition of $m$, where $m$ is the semisimple rank of $\bG$. Any element $x$ of $W$ is of the form $(x_1,\cdots, x_l)$. Similarly, we write $\l$ as $(\l_1,\cdots,\l_l)$. We have $w_0=(w_{0,1},\cdots,w_{0,l})$, where $w_{0,i}$ is the longest element of $W_i$. In the same way, we write the sum of positive roots as $(2\rho_1^\vee,\cdots, 2\rho_l^\vee)$ and the highest root as $(\theta_1,\cdots,\theta_l)$.

We observe that while dealing with the quasi-simple case, we introduce a restriction on depth in two stages - as found in \cref{sec:bd1} and \cref{sec:keylemma}. By the last paragraph we therefore need to require for each $i$ $$\text{depth}(\l_i) \geq \Xi_i,$$ where $\Xi_i$ is the lower bound given for the Weyl group $W_i,$ to be read off from the table above. Note that $\Xi_i$ is then some linear expression of $m$, depending on the type of $i$-th factor and the partition of $m$. We remark that under such hypothesis, \cref{Generic-nu} applies to each quasi simple factor of $\bG$ and thus we need to justify the formula for such groups.

By \cref{reduction}, we know that $\nu_{ut^\l v}=\nu_{t^\l (v \lhd u)}$. Then by \cref{Generic-nu} it follows that $\nu_{t^\l (v \lhd u)}=\l - \text{wt}(v \lhd u)$. By \cref{wt} we then conclude that $\nu_w=\l-\text{wt}(v^{-1},u)$. This concludes our discussion.
\end{remark} 
\subsection{Proposed linear bound on depth}\label{sec:bd1}
For the rest of \cref{sec:gen-nu}, we assume that $W$ is an irreducible Weyl group for the root system of Cartan type $X_n$. We first establish an upper bound for 
\begin{equation*}
    \CM=\CM_{X_n}:=\max\{\<\alpha_i, \wt(x) \>: \alpha_i \in \D, x \in W\}
\end{equation*}
This is based on the explicit computation of $\text{wt}(w_0)$ for the longest element $w_0$ in each type, which we defer to the next section for the sake of continuity.

\smallskip
By \cref{monotone-wt}, we have $\{\wt (x): x \in W\} \subset \{\nu \in X_*(T): \nu \leq \wt(w_0)\}$. Therefore, $$\CM \leq \max \{\< \a_i,\nu\>: \nu \in X_*(T), \nu \leq \wt(w_0), \a_i \in \D\}.$$ It is easy to see that maximum of the latter set is realized at $\nu=\kappa_l \a_l^\vee$, where $\kappa_l \a_l^\vee$ is a summand of $\wt(w_0)$ such that $\kappa_l=\max\{\kappa_j:\kappa_j\a_j^\vee~\text{is a summand of $\wt(w_0)$}\}$. Since the maximum value of $\<\a_i,\a_l^\vee\>$ is $2$ in every Cartan type, this shows that $\CM_{X_n}$ is bounded above by twice the largest coefficient in the expression of $\wt(w_0)$ in each type $X_n$. This is tabulated in the list below, where we denote by $\wtd \CM=\wtd \CM_{X_n}$ the upper bound obtained in this manner.

\smallskip
\begin{center}\label{1st bd}
\begin{tabular}{ |c|c|c|c|c|c|c|c|c|} 
 \hline
 Type  & $A_n$ & $B_n/C_n$ & $D_n$ & $E_6$ & $E_7$ & $E_8$ & $F_4$ & $G_2$  \\
 \hline
 $\wtd \CM$ & $n+1$ & $2n$ & $2n$ & $12$ & $16$ & $28$ & $12$ & $4$  \\
 \hline
\end{tabular}
\end{center}
\subsection{Proof of the easier inequality}\label{sec:easy>}
We need the following result about Bruhat order on $\wtd W$.
\begin{lemma}\cite[Remark 3.9]{Rapoport05}\label{Base case}
Let $\l \in X_*(T)$ be dominant. Let $\b \in \Phi^+$ such that $\l-\b^\vee$ is dominant. Then $t^{\l-\b^\vee} \leq t^\l s_\b \leq t^\l$ in $\wtd W$.
\end{lemma}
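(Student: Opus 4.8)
The plan is to prove Lemma~\ref{Base case} by a two-step argument: first show $t^\l s_\b \le t^\l$, then show $t^{\l-\b^\vee}\le t^\l s_\b$; the combination gives the chain. I would work with the decomposition $\wtd W = W_a \rtimes \Omega$ and the associated Bruhat order, and rely on the fact (which follows from the definitions recalled in \cref{sec: l and <}, e.g.\ via \eqref{len1} applied to a regular dominant coweight) that length and Bruhat comparisons among translation elements can be controlled by counting affine reflection hyperplanes separating the corresponding alcoves.

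For the first inequality $t^\l s_\b \le t^\l$: note that $t^\l s_\b = t^\l \cdot s_\b$ is obtained from $t^\l$ by right-multiplication by a (finite) reflection, so the two elements are Bruhat-comparable (their alcoves share the wall through the base vertex corresponding to $\b$), and I would check $\ell(t^\l s_\b) = \ell(t^\l) - \ell(s_\b) < \ell(t^\l)$ using \eqref{len1} with $u = 1$, $v = s_\b$: this gives $\ell(t^\l s_\b) = \<2\rho,\l\> - \ell(s_\b)$, which is strictly smaller than $\ell(t^\l) = \<2\rho,\l\>$ (here one should note $\l$ regular is not assumed, but the inequality $t^\l s_\b \le t^\l$ can be obtained by a limiting/monotonicity argument from the regular case, or directly since right multiplication by $s_\b$ either raises or lowers a translation element and translation elements by dominant coweights are "maximal" in the relevant sense among their $W$-coset). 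Thus $t^\l s_\b \le t^\l$.

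For the second inequality $t^{\l - \b^\vee} \le t^\l s_\b$: the key observation is that $t^{\l-\b^\vee} = t^\l s_\b \cdot (s_\b t^{-\b^\vee})$ and $s_\b t^{-\b^\vee} = t^{s_\b(-\b^\vee)} s_\b = t^{\b^\vee} s_\b$, so $t^{\l-\b^\vee}$ is obtained from $t^\l s_\b$ by a single affine reflection $r = t^{\b^\vee} s_\b = s_{\b,1}$ (the affine reflection in the hyperplane $\{\<\b,\cdot\> = 1\}$). Hence these two elements differ by one affine reflection and are Bruhat-comparable; it remains to show the inequality goes the right way, i.e.\ $\ell(t^{\l-\b^\vee}) < \ell(t^\l s_\b)$. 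Since $\l$ and $\l - \b^\vee$ are both dominant, $\<2\rho,\l-\b^\vee\> = \<2\rho,\l\> - \<2\rho,\b^\vee\>$, and using $\ell(t^{\l-\b^\vee}) = \<2\rho,\l-\b^\vee\>$ together with $\ell(t^\l s_\b) = \<2\rho,\l\> - \ell(s_\b) \ge \<2\rho,\l\> - \<2\rho,\b^\vee\> + 1$ (as $\ell(s_\b)\le \<2\rho,\b^\vee\>-1$), we get $\ell(t^{\l-\b^\vee}) < \ell(t^\l s_\b)$, forcing $t^{\l-\b^\vee}\le t^\l s_\b$.

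The main obstacle I anticipate is the regularity hypothesis: the clean length formula \eqref{len1} is stated for $\l$ regular dominant, whereas the lemma only assumes $\l$ and $\l-\b^\vee$ dominant (possibly singular). I would handle this either by citing Rapoport's original argument directly (this is quoted as \cite[Remark 3.9]{Rapoport05}), or by a degeneration argument: pick a regular dominant $\mu$, apply the result to $\l + N\mu$ for large $N$ where everything is regular, and pass to the limit using the translation-compatibility of the Bruhat order (if $t^{\nu_1} \le t^{\nu_2}$ and both shift by a common dominant amount, comparability is preserved — a statement that can be extracted from \eqref{len3} or from the alcove-walk description). Since this lemma is only quoted from the literature, the cleanest route is simply to attribute it, but the self-contained proof sketch above records why it is true.
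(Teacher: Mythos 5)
The paper offers no proof of this lemma at all: it is quoted verbatim from \cite[Remark 3.9]{Rapoport05}, so there is nothing internal to compare against. Your sketch is a correct self-contained argument, and its skeleton is the standard one: each of the two inequalities compares a pair of elements of $\wtd W$ that differ by right multiplication by a single (affine) reflection lying in $W_a$, so they are Bruhat-comparable with the order determined by length, and the length comparison is then a computation. Two remarks. First, your worry about the regularity hypothesis in \eqref{len1} can be resolved directly, without a limiting argument: the hypothesis that $\l-\b^\vee$ is dominant gives $\<\a,\l\>\geq\<\a,\b^\vee\>$ for every $\a\in\Phi^+$, and every $\a\in\text{Inv}(s_\b)$ satisfies $\<\a,\b^\vee\>>0$, hence $\<\a,\l\>\geq 1$; plugging this into the Iwahori--Matsumoto length formula $\ell(t^\mu w)=\sum_{\a>0,\,w\i\a>0}|\<\a,\mu\>|+\sum_{\a>0,\,w\i\a<0}|\<\a,\mu\>-1|$ yields exactly $\ell(t^\l s_\b)=\<2\rho,\l\>-\ell(s_\b)$ even when $\l$ is singular, which is all that both of your steps need. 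Second, two of your parenthetical justifications are shakier than the facts they support: comparability of $w$ and $wt$ for a reflection $t$ is a general Coxeter-group property (via strong exchange), not a statement about adjacent alcoves sharing a wall, and the claim that a dominant translation is maximal in its coset $t^\l W$ fails for singular $\l$ (e.g.\ $\l=0$); neither slip affects the argument since the precise length inequality you need is the one computed above. With those points tightened, the proof is complete and correct.
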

We start by establishing the easier inequality.
\begin{proposition}\label{easy >}
Let $w=t^\l x\in \wtd W$ such that $\text{depth}(\l) \geq \wtd \CM$. Then $w \geq t^{\l - \wt(x)}$ and therefore $\nu_w \geq \l-wt(x)$.
\end{proposition}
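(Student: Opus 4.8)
The strategy is to produce an explicit chain from $w=t^\l x$ down to $t^{\l-\wt(x)}$ in the Bruhat order, built by repeatedly applying \cref{Base case}, where the sequence of roots used is precisely the one coming from a reduced quantum reflection decomposition of $x$. First I would invoke \cref{RQRD}: write $x = s_{\b_1}\cdots s_{\b_k}$ as a reduced quantum reflection decomposition, so that $\ell_\downarrow(x)=k$, each $\b_i$ is a quantum root, and $\wt(x)=\sum_{i=1}^k \b_i^\vee$; by \cref{wt} (or directly by \cref{RQRD} together with the identification of such decompositions with shortest down-only paths in $\G_\Phi$), this is exactly the weight of $x$.

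Next I would set up the telescoping chain. Define $\mu_0=\l$ and $\mu_j = \l - \sum_{i=1}^j \b_i^\vee$ for $1 \le j \le k$, so $\mu_k = \l - \wt(x)$. The key point is to check two things at each stage: (i) $\mu_j$ is dominant, and (ii) $\b_{j+1}^\vee$ can be subtracted, i.e. $\mu_{j+1}=\mu_j - \b_{j+1}^\vee$ is still dominant. Granting these, \cref{Base case} applied with $\l \rightsquigarrow \mu_j$ and $\b \rightsquigarrow \b_{j+1}$ gives $t^{\mu_{j+1}} \le t^{\mu_j} s_{\b_{j+1}} \le t^{\mu_j}$. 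Combining $t^{\mu_{j+1}} \le t^{\mu_j}$ over all $j$ yields $t^{\l-\wt(x)} = t^{\mu_k} \le t^{\mu_0} = t^\l$. To upgrade this to a comparison with $w=t^\l x$ itself I would note $t^\l x \ge t^\l$ is not quite what is needed; rather one wants $t^\l x \geq t^{\l-\wt(x)}$. For this I would instead run the chain as $t^{\mu_{j}} s_{\b_{j+1}} \cdots s_{\b_k}$-type products, using \cref{Base case} in the form $t^{\mu_{j}} s_{\b_{j+1}} \ge t^{\mu_{j+1}}$ together with the length-additivity from \cref{len1} (valid since each $\mu_j$ is regular dominant) and property \eqref{len3} to carry along the trailing factors; at the top this telescopes to $t^\l x = t^\l s_{\b_1}\cdots s_{\b_k} \ge t^{\l-\wt(x)}$. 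Finally, since $t^{\l-\wt(x)} \le w$ and $\nu(t^{\mu}) = \mu$ (as $\l-\wt(x)$ is dominant), \eqref{max-nu} gives $\nu_w \ge \nu(t^{\l-\wt(x)}) = \l - \wt(x)$.

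\textbf{Main obstacle.} The crux is verifying that all the intermediate coweights $\mu_j$ remain dominant, and in fact regular dominant so that \cref{len1} and \cref{Base case} apply cleanly. This is exactly where the depth hypothesis $\text{depth}(\l) \ge \wtd\CM$ enters: for each simple root $\a_i$ one has $\langle \a_i, \mu_j\rangle = \langle \a_i,\l\rangle - \langle \a_i, \sum_{r\le j}\b_r^\vee\rangle \ge \text{depth}(\l) - \langle \a_i, \wt(x)\rangle \ge \text{depth}(\l) - \CM_{X_n} \ge \text{depth}(\l) - \wtd\CM \ge 0$, using the definition of $\CM_{X_n}$ from \cref{sec:bd1} and the bound $\CM_{X_n}\le \wtd\CM$; since $\wt(x)\ge 0$ in dominance order (so each partial sum $\sum_{r\le j}\b_r^\vee$ is bounded above by $\wt(x)$ — this uses monotonicity along the path, or one can replace $\wtd\CM$ by a marginally larger constant to absorb strictness and ensure regularity). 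One must also double-check that \emph{after} removing $\b_{j+1}^\vee$ the coweight stays dominant, which is the content of condition (ii) above and follows from the same estimate with $j+1$ in place of $j$. I expect the bookkeeping of which partial sums are $\le \wt(x)$ in dominance order, and extracting a clean strict inequality for regularity, to be the only genuinely delicate part; everything else is a direct application of \cref{Base case}, \eqref{len1}, \eqref{len3} and \eqref{max-nu}.
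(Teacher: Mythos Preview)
Your overall architecture---take a reduced quantum reflection decomposition, telescope via \cref{Base case}, and carry the trailing reflections along using \eqref{len1} and \eqref{len3}---is exactly the paper's approach. The one genuine gap is in your ``main obstacle'' paragraph, where you bound $\langle\a_i,\mu_j\rangle$ by writing
\[
\langle\a_i,\l\rangle - \Big\langle\a_i,\sum_{r\le j}\b_r^\vee\Big\rangle \;\ge\; \text{depth}(\l) - \langle\a_i,\wt(x)\rangle.
\]
This step needs $\langle\a_i,\sum_{r\le j}\b_r^\vee\rangle \le \langle\a_i,\wt(x)\rangle$, which you justify by ``$\sum_{r\le j}\b_r^\vee \le \wt(x)$ in dominance order''. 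That dominance inequality is indeed trivially true (the difference is a nonnegative sum of positive coroots), but it does \emph{not} imply the pairing inequality: $\langle\a_i,\cdot\rangle$ is not monotone for the dominance order, since $\langle\a_i,\b_r^\vee\rangle$ can be negative. So as written the bound on $\langle\a_i,\mu_j\rangle$ is unjustified, and ``replacing $\wtd\CM$ by a marginally larger constant'' is not an option either, since the proposition is stated for exactly that constant.

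The paper closes this gap with a short but essential observation (its ``statement (a)''): each prefix $s_{\b_1}\cdots s_{\b_j}$ is itself a reduced quantum reflection decomposition of the element $x_j:=s_{\b_1}\cdots s_{\b_j}$. Hence $\sum_{r\le j}\b_r^\vee=\wt(x_j)$ for some $x_j\in W$, and then by the very definition of $\CM_{X_n}$ one has $\langle\a_i,\sum_{r\le j}\b_r^\vee\rangle=\langle\a_i,\wt(x_j)\rangle\le\CM_{X_n}\le\wtd\CM$. With this in hand your chain argument goes through verbatim: each $\mu_j$ is dominant, \cref{Base case} applies at every step, the length additivity checks for \eqref{len3} succeed because the relevant translations are dominant, and \eqref{max-nu} finishes. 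So the missing ingredient is precisely recognising that the partial sums are themselves of the form $\wt(\text{some element})$, not merely dominated by $\wt(x)$.
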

\begin{proof}
Suppose that $x=s_{\b_1} \cdots s_{\b_l}$ is a reduced quantum reflection decomposition with $\ell_\downarrow(x)=l>1$, and therefore $\wt (x)=\sum\limits_{i=1}^{l} \b_i^\vee$. Let us first note that:
\[\text{(a) $s_{\b_1}\cdots s_{\b_k}$ is a reduced quantum reflection decomposition for $x_k:=xs_{\b_l}\cdots s_{\b_{k+1}}$ for all $k \leq l$.}\] Suppose otherwise; then we can find a reduced quantum reflection decomposition of the form $x_k=s_{\g_1}\cdots s_{\g_j}$ with $j < k$. But then $x=x_ks_{\b_{k+1}}\cdots s_{\b_l}=s_{\g_1}\cdots s_{\g_j}s_{\b_{k+1}}\cdots s_{\b_l}$ satisfies length additivity: $$\ell(x)=\ell(x_k)+\sum\limits_{i=k+1}^l \b_i^\vee=\sum\limits_{i=1}^{j} \g_i^\vee+\sum\limits_{i=k+1}^{l} \b_i^\vee.$$ Note that all associated roots in this new decomposition of $x$ are quantum, and it has $j+l-k <l$ factors, hence it contradicts the fact that $\ell_\downarrow(x)=l$. So statement (a) is proved.

To prove the proposition, we induct on $\ell_\downarrow(x)$. We note our depth hypothesis ensures that $\l-\b^\vee$ is dominant for all $\b \in \Phi^+$: for $\a_i \in \D$, we have $\<\a_i,\l-\b^\vee\>=\<\a_i,\l\>-\<\a_i,\b^\vee\> > 0$, since maximum value of $\langle \alpha^{\vee},\alpha_i \rangle$ equals $2$ in every Cartan type, except for $G_2$ - in which case it equals $3$, cf. \cite[Chapter 6, Section 1, no. 3]{Bou}. Therefore \cref{Base case} covers the base case, i.e. when $x=s_\b$ for some $\b \in \Phi^+$.

Let us now apply induction hypothesis to $xs_{\b_l}=s_{\b_1} \cdots s_{\b_{l-1}}$. Since $\wt(xs_{\b_l})=\sum\limits_{i=1}^{l-1} \b_i^\vee$ by virtue of statement (a), this gives 
\begin{equation}\label{hypo}
    t^\l s_{\b_1} \cdots s_{\b_{l-1}} \geq t^{\l-\sum\limits_{i=1}^{l-1} \b_i^\vee}.
\end{equation}
To perform the induction step, we apply a property of Bruhat order as described in \cref{len3}. We set $$w_1=t^{\l}x, w_2=t^{\l-\sum\limits_{i=1}^{l-1} \b_i^\vee}s_{\b_l}, v=s_{\b_l}$$ and check the required length additivity. Note that $\l-\sum\limits_{i=1}^{l-1} \b_i^\vee$ is dominant under our depth hypothesis: for $\a_i \in \D$, \[\<\a_i,\l-\sum\limits_{i=1}^{l-1} \b_i^\vee\>=\<\a_i,\l\>-\<\a_i,\wt(xs_{\b_l})\> > \<\a_i,\l\>-\CM \geq 0,\]therefore the length formula in \cref{len1} applies. We see that
\begin{enumerate}
    \item $\ell(w_1v)=\ell(t^\l s_{\b_1} \cdots s_{\b_{l-1}})=\ell(t^\l)-\sum\limits_{i=1}^{l-1}\ell(s_{\b_i})=\ell(t^\l)-\sum\limits_{i=1}^l\ell(s_{\b_i})+\ell(s_{\b_l})=\ell(w_1)+\ell(v).$
    \item $\ell(w_2v)=\ell(t^{\l-\sum\limits_{i=1}^{l-1} \b_i^\vee})=\ell(t^{\l-\sum\limits_{i=1}^{l-1} \b_i^\vee})-\ell(s_{\b_l}) + \ell(s_{\b_l})=\ell(w_2)+\ell(v).$
\end{enumerate}
In presence of \cref{hypo}, we therefore get $w_1 \geq w_2$, i.e. $t^{\l}x \geq t^{\l-\sum\limits_{i=1}^{l-1} \b_i^\vee}s_{\b_l}$. Finally, we note that $\l-\wt(x)=\l-\sum\limits_{i=1}^{l} \b_i^\vee$ is dominant (by similar argument as before) and thus \cref{Base case} applies to give $t^{\l-\sum\limits_{i=1}^{l-1} \b_i^\vee}s_{\b_l} \geq t^{\l-\sum\limits_{i=1}^{l} \b_i^\vee}$. Combining these inequalities, we get $t^\l x \geq t^{\l -\wt(x)}$. Appealing to \cref{max-nu}, we thus get $\nu_w \geq \l-\wt(x)$. This finishes the proof.
\end{proof}
\begin{lemma}\label{translations}
Let $w=t^\l x\in \wtd W$ such that $\text{depth}(\l) > \wtd \CM$. Then $\nu_w=\max\{\g^+:t^\g \leq w\}$.
\end{lemma}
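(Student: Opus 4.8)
The assertion to prove is that, for $w = t^\lambda x$ with $\operatorname{depth}(\lambda) > \widetilde{\CM}$, the maximal Newton point satisfies $\nu_w = \max\{\gamma^+ : t^\gamma \le w\}$, where $\gamma^+$ denotes the dominant representative of the $W$-orbit of $\gamma$. The "$\ge$" direction is immediate: for any translation $t^\gamma \le w$, we have $\nu_{t^\gamma} = \gamma^+$ (the Newton point of a translation element is the dominant conjugate of its coweight), so by the formula $\nu_w = \max\{\nu(u) : u \le w\}$ of \cref{max-nu} we get $\nu_w \ge \gamma^+$, and hence $\nu_w \ge \max\{\gamma^+ : t^\gamma \le w\}$.

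\medskip

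The substance is the reverse inequality $\nu_w \le \max\{\gamma^+ : t^\gamma \le w\}$. By \cref{max-nu}, $\nu_w = \nu_{u_0}$ for some $u_0 \le w$; it suffices to produce a translation element $t^\gamma \le w$ with $\gamma^+ \ge \nu_{u_0}$, equivalently $\gamma^+ \ge \nu_w$. The natural candidate comes from \cref{easy >}: we have shown $t^{\lambda - \wt(x)} \le w$, and its dominant Newton point is $(\lambda - \wt(x))^+$. So the plan is to show that $\lambda - \wt(x)$ is already dominant under the depth hypothesis — this follows exactly as in the proof of \cref{easy >}, since for $\alpha_i \in \D$ one has $\langle \alpha_i, \lambda - \wt(x)\rangle = \langle \alpha_i, \lambda\rangle - \langle \alpha_i, \wt(x)\rangle > \widetilde{\CM} - \CM \ge 0$ by the definition of $\CM$ and the bound $\CM \le \widetilde{\CM}$ established in \cref{sec:bd1}. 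Then $(\lambda - \wt(x))^+ = \lambda - \wt(x)$, and by \cref{easy >} this is $\le \nu_w$; so in fact $\lambda - \wt(x)$ is one of the $\gamma^+$'s appearing in the set, giving $\max\{\gamma^+ : t^\gamma \le w\} \ge \lambda - \wt(x)$.

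\medskip

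To close the argument we must upgrade the inequality $\nu_w \ge \lambda - \wt(x)$ of \cref{easy >} to the claim that $\lambda - \wt(x)$ is actually the \emph{maximum} dominant coweight among translations below $w$, which is what matches $\nu_w$. The cleanest route, and the one I expect the paper to take: invoke \cref{max-nu} once more, pick $t^\gamma \le w$ with $\gamma^+$ maximal among all such translations, and observe that since $t^\gamma \le w$ we have $\nu_w \ge \gamma^+$ by \cref{max-nu}; we must show $\gamma^+ \le \lambda - \wt(x)$ is impossible to strictly exceed, i.e. that no translation below $w$ has dominant part strictly bigger. This is where the depth hypothesis does real work: the set of $u \le w = t^\lambda x$ consists of elements of the form $u' t^{\lambda'} v'$ with $\lambda'$ in a bounded neighborhood of the $W$-orbit of $\lambda$, and a translation $t^\gamma \le w$ forces, via the alcove/length combinatorics and \cref{Base case}-type reasoning, that $\gamma$ lies in $\lambda - (\text{non-negative span of positive coroots})$ with the deficit bounded by the coroot data of $W$; combined with depth $> \widetilde{\CM}$ the dominant representative $\gamma^+$ cannot exceed $\lambda - \wt(x)$. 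I expect the \textbf{main obstacle} to be precisely this last step — pinning down that $\lambda - \wt(x)$ is maximal (not merely a lower bound for $\nu_w$) — which presumably uses the superregular case of \cite{Milicevic21} as a comparison point exactly as in the strategy outlined for Theorem A: one knows the formula $\nu_{t^\lambda x} = \lambda - \wt(x)$ holds when $\lambda$ is superregular, and one transfers the identity $\nu_w = \max\{\gamma^+ : t^\gamma \le w\}$ down to the linear-depth range by the monotonicity lemmas (\cref{monotone-wt}) and the already-proven inequality, the point being that both sides vary compatibly as $\lambda$ is scaled into the superregular range.
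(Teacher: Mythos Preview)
You have conflated this lemma with the main theorem (Theorem~A / \cref{Generic-nu}). The lemma does \emph{not} assert that $\nu_w=\lambda-\wt(x)$, nor that $\lambda-\wt(x)$ is the maximal translation below $w$; it only asserts that the maximum in $\nu_w=\max\{\nu(u):u\le w\}$ is achieved at some translation element. Your third paragraph drifts into trying to establish $\nu_w=\lambda-\wt(x)$ by transfer from the superregular range, but that is precisely the content of \cref{Generic-nu}, and \cref{translations} is an \emph{ingredient} in its proof (see \cref{sec:keylemma} and the end of \S4). So your proposed route is circular.

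The paper's argument is much more direct and uses a fact you have not mentioned: the Newton point of any non-translation element is singular. Concretely, if $u=t^{y\mu}z$ with $z\ne 1$ of order $m\ge 2$, then $\nu(u)=\bigl(\tfrac{1}{m}\sum_{i=1}^m z^i(y\mu)\bigr)^+$; the sum $\sum_i z^i(y\mu)$ is $z$-invariant, hence lies on a wall, hence its dominant conjugate is singular. On the other hand, by \cref{easy >} one has $\nu_w=\lambda-\epsilon$ for some $\epsilon\le\wt(x)\le\wt(w_0)$, and the depth hypothesis $\text{depth}(\lambda)>\widetilde{\CM}$ then forces $\nu_w$ to be \emph{regular} (if $\langle\alpha_i,\nu_w\rangle=0$ for some simple $\alpha_i$, then $\langle\alpha_i,\lambda\rangle=\langle\alpha_i,\epsilon\rangle\le\widetilde{\CM}$, contradiction). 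These two facts together force the maximizer $u$ in \cref{max-nu} to have $z=1$, i.e.\ to be a translation, which is exactly the claim. Your ``easy direction'' and the observation that $\lambda-\wt(x)$ is dominant are correct but not the crux; the missing idea is the singularity-of-Newton-points step.
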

\begin{proof}
By \cref{max-nu}, we can assume that $\nu_w=\nu(t^{y\mu}z)$ for some $t^{y\mu}z \in \wtd W, w \geq t^{y\mu}z$. Suppose that $z \neq 1$, therefore the order of $z$ equals $m \geq 2$. Then $\nu(t^{y\mu}z)=(\frac{1}{m}\sum\limits_{i=1}^m z^i(y\mu))^+$. Note that $\z \cdot \sum\limits_{i=1}^m z^i(y\mu) = \sum\limits_{i=1}^m z^i(y\mu)$ for all $\z \in W$, so the element $\sum\limits_{i=1}^m z^i(y\mu)$ lies on the wall of some Weyl chamber, cf. \cite[Chapter 5, Section 3.3, Remark 3]{Bou}. Therefore $(\frac{1}{m}\sum\limits_{i=1}^m z^i(y\mu))^+$ lies on the wall of $\CC^+$, and hence $\nu_w$ is singular. 

By \cref{easy >}, we can assume that $\nu_w=\l-\e$, with $\e \leq \wt(x) \leq \wt(w_0)$. Hence, there is some $\a_i \in \D$ such that $\<\a_i,\l-\e\>=0$, and thus $\<a_i,\l\>=\<\a_i,\e\> \leq \wtd M$, which is a contradiction to hypothesis on $\text{depth}(\l)$. Therefore, $\nu_w$ cannot be singular and hence $z=1$. 
\end{proof}
\subsection{Toward computing a downward Demazure product}
We need to understand the largest translation dominated by $w$ to determine its associated generic Newton point $\nu_w$. In view of \cref{easy >}, let us denote this translation element by $t^{y\g}$ for some $y \in W$ and $\g \geq \l-\wt(x)$. Of course, this would be equality if $\l$ is superregular. We want to leverage this result to establish such equality with some linear bound on depth here. The following lemma lay the path towards proving that. 
\begin{lemma}\label{lifting}
If $\l-2\rho^\vee$ is dominant, we have $t^\l x\geq t^{y\g}$ if and only if $t^{\l-2\rho^\vee}\geq t^{-2\rho^\vee}\rhd t^{y\g}$.
\end{lemma}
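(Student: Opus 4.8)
The plan is to reduce both sides to statements about honest Iwahori double cosets and then invoke the compatibility of the downward Demazure product with the Bruhat order. First I would check that the hypothesis ``$\l - 2\rho^\vee$ dominant'' guarantees the length-additivity we need: since $t^{2\rho^\vee} = t^{\rho^\vee} \cdot t^{\rho^\vee}$ and $\ell(t^{\l}) = \ell(t^{\l - 2\rho^\vee}) + \ell(t^{2\rho^\vee})$ whenever $\l$ and $\l - 2\rho^\vee$ are both dominant (this is just \cref{len1} applied to translations, or additivity of $\ell$ on dominant coweights), we get $t^{\l} = t^{2\rho^\vee} * t^{\l - 2\rho^\vee}$ in the Demazure monoid. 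Recall also that $t^{\rho^\vee} = $ (a lift of) the product structure giving $\overline{\breve I t^{\rho^\vee} \breve I} = \overline{\breve I t^{\rho^\vee}\breve I}$, and the key computational fact that $t^{-2\rho^\vee} \rhd z$, for $z \in \wtd W$ with $\l$-type hypotheses, extracts the ``minimal translate'' — but more usefully, $t^{2\rho^\vee} * (t^{-2\rho^\vee} \rhd t^{y\g}) $ recovers $t^{y\g}$ when $\g$ is dominant-ish. I would make this precise using \cref{Dem prod prop}(2): the map $z \mapsto t^{2\rho^\vee} \rhd z$ is a left action of the monoid $(\wtd W, *)$, and $t^{2\rho^\vee}\rhd(t^{-2\rho^\vee}\rhd t^{y\g}) = (t^{2\rho^\vee} * t^{-2\rho^\vee})\rhd t^{y\g}$; but $t^{2\rho^\vee} * t^{-2\rho^\vee}$ need not be $1$, so this is the delicate point — see below.

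\smallskip
The cleaner route, which I would actually pursue, is the following. For the ``only if'' direction: assume $t^\l x \geq t^{y\g}$. Since $t^\l = t^{2\rho^\vee} * t^{\l - 2\rho^\vee}$, the closed set $\overline{\breve I t^\l x \breve I}$ is contained in $\overline{\breve I t^{2\rho^\vee}\breve I}\cdot \overline{\breve I (t^{\l - 2\rho^\vee} x)\breve I}$ (up to checking $\ell(t^\l x) = \ell(t^{2\rho^\vee}) + \ell(t^{\l - 2\rho^\vee}x)$, which follows from the depth hypothesis on $\l$ and $\l - 2\rho^\vee$ together with \cref{len1}). Then $t^{y\g} \leq t^\l x \leq t^{2\rho^\vee} * (t^{\l-2\rho^\vee}x)$, so by the definition of Demazure product there exist $a \leq t^{2\rho^\vee}$ with $t^{y\g} = a \cdot (\text{something} \leq t^{\l-2\rho^\vee}x)$; applying the downward product $t^{-2\rho^\vee}\rhd(-)$ and \cref{Dem prod prop}(1) (monotonicity: $w \geq w'$, $v \leq v'$ imply $w \rhd v \leq w' \rhd v'$) should produce $t^{-2\rho^\vee}\rhd t^{y\g} \leq t^{\l - 2\rho^\vee}$. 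Conversely, given $t^{\l - 2\rho^\vee} \geq t^{-2\rho^\vee}\rhd t^{y\g}$, I would multiply back: $t^{2\rho^\vee} * t^{\l-2\rho^\vee} \geq t^{2\rho^\vee}*(t^{-2\rho^\vee}\rhd t^{y\g})$ by compatibility of $*$ with $\leq$, and the left side is $t^\l$ while the right side dominates $t^{y\g}$ because $t^{2\rho^\vee} \rhd (t^{-2\rho^\vee}\rhd t^{y\g}) \leq t^{2\rho^\vee}*(t^{-2\rho^\vee}\rhd t^{y\g})$ and one checks $t^{y\g} \leq t^{2\rho^\vee}*(t^{-2\rho^\vee}\rhd t^{y\g})$ directly from $t^{-2\rho^\vee}\rhd t^{y\g} = u'' t^{y\g}$ with $u'' \leq t^{-2\rho^\vee}$, so $t^{2\rho^\vee}*(u'' t^{y\g}) \geq (u'')^{-1}\cdot u'' t^{y\g} = t^{y\g}$ once $(u'')^{-1} \leq t^{2\rho^\vee}$, which holds since $u''\le t^{-2\rho^\vee}$ forces $(u'')^{-1}\le t^{2\rho^\vee}$.

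\smallskip
The main obstacle I anticipate is bookkeeping the length-additivity hypotheses needed to pass between $\wtd W$-statements and Iwahori-double-coset statements: one must verify at each step that the relevant products (e.g. $t^{2\rho^\vee}\cdot(t^{\l-2\rho^\vee}x)$ or $u''\cdot t^{y\g}$) are length-additive, so that ``$*$'' genuinely computes the join of the double cosets and Bruhat comparisons are preserved. This is where the dominance of $\l - 2\rho^\vee$ (rather than mere dominance of $\l$) is essential. I expect the argument itself to be short once these additivity checks are in place; the subtlety is entirely in confirming that $t^{-2\rho^\vee}\rhd(-)$ and $t^{2\rho^\vee}*(-)$ are ``inverse enough'' on the relevant translation elements, which is exactly the content one extracts from \cref{Dem prod}(2)--(3) and \cref{Dem prod prop}.
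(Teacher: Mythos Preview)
Your approach is correct in outline but takes a genuinely different route from the paper. The paper's argument is considerably shorter: it first observes (using the length additivity guaranteed by the hypothesis that $\l - 2\rho^\vee$ is dominant) that $t^{-2\rho^\vee}\rhd (t^\l x) = t^{\l - 2\rho^\vee}x$, and then reduces the whole lemma to the following elementary claim about a single simple reflection $s$: \emph{if $sa < a$, then $a \geq b$ if and only if $s \rhd a \geq s \rhd b$}. This is verified by a two-case analysis (according to whether $sb<b$ or $sb>b$) using only the lifting property of Bruhat order. Iterating over a reduced word for $t^{-2\rho^\vee}$ --- the hypothesis $sa<a$ being guaranteed at every step by the length computation above --- finishes the proof.

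Your route instead treats $t^{2\rho^\vee} * (-)$ and $t^{-2\rho^\vee} \rhd (-)$ as near-inverse operations and argues globally. This works, but the step you flag as delicate --- recovering $t^{y\g} \leq t^{2\rho^\vee}*(t^{-2\rho^\vee}\rhd t^{y\g})$ --- is precisely what the paper sidesteps by going one simple reflection at a time. Note also that in your ``only if'' direction, the passage from $t^{y\g} = a\cdot c$ (with $a \leq t^{2\rho^\vee}$, $c \leq t^{\l-2\rho^\vee}x$) to $t^{-2\rho^\vee}\rhd t^{y\g} \leq c$ is not really the monotonicity statement of \cref{Dem prod prop}(1); it comes directly from the definition of $\rhd$ as a minimum, since $a^{-1}\leq t^{-2\rho^\vee}$ puts $a^{-1}t^{y\g}=c$ into the defining set. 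The paper's approach buys brevity and avoids all the length-additivity bookkeeping you anticipate in your last paragraph; your approach buys a more structural, monoid-theoretic picture, at the cost of more verification.
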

\begin{proof}
Note that if $\l-2\rho^\vee$ is dominant, $\ell(t^{\l-2\rho^\vee})=\<2\rho,\l-2\rho^\vee\>=\ell(t^\l)-\ell(t^{-2\rho^\vee})$; also, since $\l\geq \l-2\rho^\vee$ are dominant coweights, $t^\l \geq t^{\l-2\rho^\vee}$, cf. \cite[proof of Proposition 3.5]{Rapoport05}. Hence, $t^{-2\rho^\vee}\rhd t^\l=t^{\l-2\rho^\vee}$. 

Now, it suffices to show the following:

(a) If $a,b$ are two elements in a Coxeter group and $s$ is a simple reflection, then $a \geq b$ if and only if $s \rhd a \geq s \rhd b$.

This follows from the well known lifting property of Bruhat order.
Note that we only need to consider the case when $s \rhd a = sa$, i.e. $a \geq sa$. Then there are two further cases.
\begin{enumerate}
    \item $s \rhd b = b$: in this case, $sb \geq b$. Hence $a\geq b$ is equivalent to $sa \geq b$, and hence $s \rhd a \geq s \rhd b$.
    \item $s \rhd b = sb$: in this case, $b \geq sb$. Hence $a \geq b$ is equivalent to $sa \geq sb$, and hence $s \rhd a \geq s \rhd b$.
 
\end{enumerate}
\end{proof}

In \cref{sec:keylemma} we will focus on computing the downward Demazure product that appears in \cref{lifting}. Before proceeding any further to do that, let us first sketch what the answer should look like. We first apply \cref{Dem prod prop} with $w=t^{-2\rho^\vee},w'=w_0$ and $v=v'=t^{y\mu}$; if we require that $\l$ is of large enough depth (in a sense made precise below) so that $\g$ is dominant regular, we get $w_0 \rhd yt^\g y^{-1} =t^\g y^{-1} \geq t^{-2\rho^\vee}\rhd t^{y\g}$ . We next apply \cref{Dem prod prop} with $v'=t^{y\g},v=t^{\g}y^{-1}$ and $w=w'=t^{-2\rho^\vee}$. If we now require $\l$ to be of sufficiently large depth so that $\g-2\rho^\vee$ is also dominant, we get $t^{-2\rho^\vee}\rhd t^{y\g} \geq t^{-2\rho^\vee}\rhd t^\g y^{-1}=t^{\g-2\rho^\vee}y^{-1}$. Combining these, we see that whenever $\l$ has ``large" depth (to be specified later) we have $$t^\g y^{-1} \geq t^{-2\rho^\vee}\rhd t^{y\g} \geq t^{\g-2\rho^\vee}y^{-1}.$$

Therefore, it is natural to expect that this downward Demazure product depends only on $y$ whenever $\l$ has a ``large" depth. In the next section, we quantify this depth condition and show that we indeed get the desired conclusion. 

\subsection{A technical lemma}\label{sec:keylemma}
For each irreducible Cartan type $X_n$, we let $\CS=\CS_{X_n}$ be twice the sum of all coefficients appearing in the expression of the highest root in terms of simple roots; in other words, $\CS=\<\theta,2\rho^\vee\>$, where $\theta$ is the highest root. This quantity will become relevant for our next lemma. We record this integer for each type in the table below.\\
\begin{center}
\begin{tabular}{ |c|c|c|c|c|c|c|c|c|} 
 \hline
 Type & $A_n$ & $B_n/C_n$ & $D_n$ & $E_6$ & $E_7$ & $E_8$ & $F_4$ & $G_2$  \\
 \hline
 $\CS$ & $2n$ & $4n-2$ & $4n-6$ & $11$ & $17$ & $29$ & $11$ & $5$  \\
 \hline
\end{tabular}
\end{center}

\smallskip
Note that $\Xi=\wtd \CM+\CS$ is the final lower bound that appears in \cref{Generic-nu}. We now prove a key lemma that would help us achieve the harder inequality.
\begin{lemma}\label{independence}
Let $\text{depth}(\l) > \Xi$. We continue to assume that $w=t^\l x\geq t^{y\g}$ such that $\g=\nu_w \geq \l-\wt(x)$. Then there exists a coweight $\mu_y$ depending only on $y$ such that $t^{-2\rho^\vee}\rhd t^{\g y}=t^{\g-\mu_y}y^{-1}$.
\end{lemma}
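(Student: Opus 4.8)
Write $F(\gamma)=t^{-2\rho^\vee}\rhd t^{y\gamma}$; we must show $F(\gamma)=t^{\gamma-\mu_y}y^{-1}$ for a coweight $\mu_y$ depending only on $y$. The idea is to first record the room given by the depth hypothesis, then show that $F(\gamma)$ is, up to translation, independent of $\gamma$ (so that the whole computation reduces to a single deep $\gamma$), and finally to pin down the finite part.

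\emph{Step 1 (depth room and the sandwich).} By \cref{easy >} we have $\gamma=\nu_w\ge\lambda-\wt(x)$, and by \cref{monotone-wt} together with the table of \cref{sec:bd1}, $\langle\alpha_i,\wt(x)\rangle\le\CM\le\wtd\CM$ for every simple root $\alpha_i$; hence $\text{depth}(\lambda)>\Xi=\wtd\CM+\CS$ forces $\text{depth}(\gamma)>\CS=\langle\theta,2\rho^\vee\rangle$. Since $\langle\alpha_i,z\nu\rangle\le\langle\theta,\nu^+\rangle$ for any $z\in W$, this means $\gamma-z\nu$ is dominant \emph{regular} for every coweight $\nu$ with $\nu^+\le 2\rho^\vee$; as every $u\le t^{-2\rho^\vee}$ has translation part $\xi$ with $\xi^+\le 2\rho^\vee$, this is exactly the clearance needed to run all the comparisons below through the length formula \cref{len1}. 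We also use the sandwich $t^{\gamma-2\rho^\vee}y^{-1}\le F(\gamma)\le t^\gamma y^{-1}$ proved above, obtained from the monotonicity of $\rhd$ in \cref{Dem prod prop}, from $w_0\rhd t^{y\gamma}=t^\gamma y^{-1}$, and from $t^{-2\rho^\vee}\rhd(t^\gamma y^{-1})=t^{\gamma-2\rho^\vee}y^{-1}$ (the last being immediate because $u=t^{-2\rho^\vee}$ is the unique minimiser).

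\emph{Step 2 (stabilisation in $\gamma$).} For $u=t^{-\xi}\bar u\le t^{-2\rho^\vee}$ one computes $u\,t^{y\gamma}=t^{(\bar uy)\gamma-\xi}\bar u$, whose dominant translation part is $\gamma-(\bar uy)^{-1}\xi$, dominant regular by Step 1. Hence \cref{len1} gives
\[
\ell\bigl(u\,t^{y\gamma}\bigr)=\langle2\rho,\gamma\rangle+\bigl(\ell(\bar uy)-\ell(y)-\langle2\rho,(\bar uy)^{-1}\xi\rangle\bigr),
\]
and the bracketed quantity is independent of $\gamma$. Consequently the minimiser $u^\star=u^\star(y)$ computing $F(\gamma)$ depends only on $y$, and for any dominant $\delta$ right multiplication by $t^{y\delta}$ is exactly length-adding on $\{u\,t^{y\gamma}:u\le t^{-2\rho^\vee}\}$; by \cref{len3} this yields
\[
F(\gamma+\delta)=F(\gamma)\cdot t^{y\delta}
\]
(the inequality $\le$ being clear, and $\ge$ following by multiplying $F(\gamma)\le u^\star t^{y\gamma}$ on the right by $t^{y\delta}$). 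Thus $F(\gamma)=F(\gamma_0)\cdot t^{y(\gamma-\gamma_0)}$ for any fixed dominant $\gamma_0$ of depth just above $\CS$, and it remains to identify the single element $F(\gamma_0)$.

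\emph{Step 3 (the finite part, and conclusion).} By the display in Step 2, $F(\gamma_0)=u^\star t^{y\gamma_0}$ has finite part $\bar u^\star$; the heart of the matter is to show $\bar u^\star=y^{-1}$, i.e. that the minimiser of the bracketed length functional over $u=t^{-\xi}\bar u\le t^{-2\rho^\vee}$ is achieved with $\bar uy=1$. Granting this, $F(\gamma_0)=t^{\gamma_0-\mu_y}y^{-1}$ where $\mu_y:=\xi^\star$, and Step 2 then gives
\[
F(\gamma)=t^{\gamma_0-\mu_y}y^{-1}\cdot t^{y(\gamma-\gamma_0)}=t^{\gamma_0-\mu_y}\,t^{\gamma-\gamma_0}\,y^{-1}=t^{\gamma-\mu_y}y^{-1},
\]
as asserted; moreover the sandwich of Step 1 forces $0\le\mu_y\le 2\rho^\vee$, so $\gamma-\mu_y$ is dominant regular, as it must be for the formula to make sense. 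The main obstacle is precisely the claim $\bar u^\star=y^{-1}$: one argues that introducing a nontrivial $\bar uy$ strictly increases the length functional beyond what any admissible translation part $\xi$ (constrained by $\xi^+\le 2\rho^\vee$) can recover — a maximisation problem whose controlling constant is exactly $\CS=\langle\theta,2\rho^\vee\rangle$, this being the worst translational drift that can occur while keeping every dominant-representative identity of Steps 1–2 valid. Everything else is bookkeeping with \cref{Dem prod prop}, \cref{len1} and \cref{len3}.
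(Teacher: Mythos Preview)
Your Steps 1 and 2 are essentially correct and give a clean reduction: once you know that $\ell(u\,t^{y\gamma})=\langle 2\rho,\gamma\rangle + g(u)$ with $g$ independent of $\gamma$, the order-preserving property in \cref{len3} does yield $F(\gamma+\delta)=F(\gamma)\cdot t^{y\delta}$, and hence it suffices to identify $F$ at a single deep $\gamma_0$. (A small point: you should say a word about why $\gamma\le\lambda$, so that $\gamma=\lambda-\varepsilon$ with $0\le\varepsilon\le\wt(x)$ and hence $\langle\alpha_i,\varepsilon\rangle\le\wtd\CM$; this is what actually gives $\text{depth}(\gamma)>\CS$, since the inequality $\gamma\ge\lambda-\wt(x)$ alone does not control each coordinate $\langle\alpha_i,\gamma\rangle$.)

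The genuine gap is Step 3. You correctly isolate the heart of the matter --- that the left finite part of $F(\gamma)$ is trivial, i.e.\ $\bar u^\star y=1$ --- but your justification (``introducing a nontrivial $\bar u y$ strictly increases the length functional beyond what any admissible $\xi$ can recover'') is only a heuristic, and it is not at all clear how to turn it into an inequality or why the constant $\CS$ is the right control. Concretely, you would need to compare $\min_{\xi:\,t^{-\xi}y^{-1}\le t^{-2\rho^\vee}}\bigl(-\langle 2\rho,\xi\rangle\bigr)$ against $\min_{\xi:\,t^{-\xi}\bar u\le t^{-2\rho^\vee}}\bigl(\ell(\bar u y)-\langle 2\rho,(\bar u y)^{-1}\xi\rangle\bigr)$ for every $\bar u\ne y^{-1}$, and the constraint $t^{-\xi}\bar u\le t^{-2\rho^\vee}$ couples $\xi$ and $\bar u$ in a way that is not easy to unwind by a direct length estimate. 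The paper bypasses this optimisation entirely by a short algebraic trick: writing $t^{-2\rho^\vee}=w_0 t^{2\rho^\vee}w_0$ and using associativity of $*$ one gets $w_0*(w_0 t^{2\rho^\vee}w_0)=t^{-2\rho^\vee}$, hence by \cref{Dem prod prop}
\[
w_0\rhd F(\gamma)=\bigl(w_0*t^{-2\rho^\vee}\bigr)\rhd t^{y\gamma}=t^{-2\rho^\vee}\rhd t^{y\gamma}=F(\gamma).
\]
Since $w_0\rhd(a\,t^\mu b)=t^\mu b$ whenever $\mu$ is dominant regular, this forces the left finite part of $F(\gamma)$ to vanish. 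That identity is the missing idea; without it (or a genuine substitute), Step 3 is not a proof.
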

\begin{proof}
We begin by noting that we can compute this downward Demazure product from a specific length additive decomposition of $t^{y\g}$. More precisely, we have the following desiderata
\begin{itemize}
    \item (D1) a decomposition $t^{y\g}=a_1t^{\mu_1}b_1 \cdot a_2t^{\mu_2}b_2$, where $\mu_i$ is dominant and $t^{\mu_i}b_i \in~ ^\BS\wtd W$ for $i=1,2$, and 
    \item (D2) $\ell(t^{y\g})=\ell(a_1t^{\mu_1}b_1) + \ell(a_2t^{\mu_2}b_2)$, and
    \item (D3) $a_1t^{\mu_1}b_1$ is the largest element dominated by $t^{2\rho^\vee}$, subject to the first two conditions.
\end{itemize}
\Cref{Dem prod} asserts the existence of such decomposition, and then we have $t^{-2\rho^\vee}\rhd t^{\g y}=a_2t^{\mu_2}b_2$. We now proceed to identify these elements $a_it^{\mu_i}b_i$ in the following three steps.

\subsubsection{Relating relevant coweights} We first determine how the coweights associated to translation part of these elements are related. By (D1), $t^{y\g}=a_1b_1a_2t^{a_2^{-1}b_1^{-1}\mu_1+\mu_2}b_2$, so we have $\g=(a_2^{-1}b_1^{-1}\mu_1+\mu_2)^+$, i.e. there exists some $v \in W$ such that $\g=v(a_2^{-1}b_1^{-1}\mu_1+\mu_2)$. We will now show that $v=1$.

Assume the contrary and let $\beta \in \text{Inv} (v)$. Now, $\mu_2=v^{-1}\g-a_2^{-1}b_1^{-1}\mu_1$ is dominant, therefore $$\<\b, v^{-1}\g-a_2^{-1}b_1^{-1}\mu_1\> \geq 0.$$ 

By (D3), we have $t^{2\rho^\vee}\geq a_1t^{\mu_1}b_1$, thus $2\rho^\vee \geq \mu_1$; by a geometric characterization of dominance order, e.g. see \cite[Lemma 12.14]{AB83}, this gives \[\mu_1=\sum\limits_{\z \in W}a_\z \z\cdot 2\rho^\vee~\text{for some}~ a_\z \in \BR_{\geq 0}~\text{such that}~\sum\limits_{\z \in W} a_\z=1.\] 

Substituting this expression of $\mu_1$ in the pairing above and using its $W$-invariance, we get
\begin{equation*}
    \<v\b,\g\>-\sum_{\z \in W} a_\z \< \z^{-1} b_1a_2\b, 2\rho^\vee \> \geq 0.
\end{equation*}

Let $\b'=-v\b \in \Phi^+$. Note that $\z^{-1}b_1a_2\b \geq -\theta$ for any element $\z \in W$, and therefore $\<\z^{-1}b_1a_2\b, 2\rho^\vee\> \geq -\<\theta,2\rho^\vee\>$. Putting all these together, we obtain 
\begin{equation*}
    \<\b',\g\>=-\sum_{\z \in W} a_\z \< \z^{-1} b_1a_2\b, 2\rho^\vee \>\leq \sum\limits_{\z \in W}a_\z \<\theta,2\rho^\vee\>=\CS.
\end{equation*}

Now recall that $\g=\nu_w=\l-\e$ is dominant with $\e \leq \wt(x) \leq \wt(w_0)$. Choose a simple root $\a \leq \b'$. Then we get $\<\a,\g\>\leq \<\b',\g\>\leq \CS$, and therefore $\<\a,\l\>\leq \CS+\<\a,\e\>\leq \CS+\wtd \CM$. Since this yields a contradiction to the depth hypothesis of $\l$, we are done. Therefore $v=1$ and $\g=a_2^{-1}b_1^{-1}\mu_1+\mu_2$.

\subsubsection{Showing that certain coweights are regular} Note that $\g$ is regular, since for any simple  root $\a_i$ we have $\<\a_i,\g\>=0 \implies \<\a_i,\l\>=\<\a_i,\e\> \leq \wtd \CM$, thereby contradicting the depth hypothesis imposed on $\l$. Hence, knowing that the translation parts in $t^{y\g}=a_1b_1a_2t^{a_2^{-1}b_1^{-1}\mu_1+\mu_2}b_2$ are equal allows us to relate the finite Weyl group components from both sides. Namely, we have $y=a_1b_1a_2=b_2^{-1}$. We can simplify these relations further in the following way. We observe that $\mu_2$ is regular, since otherwise we can find a simple root $\a$ such that $\<\a,\g\>=\sum_{\z \in W}a_\z\<\a, \z\cdot a_2^{-1}b_1^{-1}2\rho^\vee\>=\sum_{\z \in W} \<b_1a_2\z\a,2\rho^\vee\>\leq \sum_{\z \in W}a_\z \< \theta,2\rho^\vee\> = \CS$; but this would yield contradiction to the imposed depth condition as before. 

\subsubsection{Showing that the product describes a dominant chamber alcove} We can now show that $a_2=1$. Recall that $t^{-2\rho^\vee}\rhd t^{\g y}=a_2t^{\mu_2}b_2$. Let us rewrite $t^{-2\rho^\vee}=w_0t^{2\rho^\vee}w_0=w_0t^{2\rho^\vee}w_0$ and apply $w_0 \rhd$ to this equation. This gives
\[ w_0 \rhd \{(w_0t^{2\rho^\vee}w_0) \rhd t^{y\g}\}=w_0\rhd (a_2t^{\mu_2}b_2).\]

We now apply \cref{Dem prod prop} on the left hand side, and use the fact $\mu_2$ is regular in the right hand side. We get
\[\{w_0*(w_0t^{2\rho^\vee}w_0)\}\rhd t^{y\g}=(w_0\rhd a_2)t^{\mu_2}b_2.\]

Note that $w_0t^{2\rho^\vee}w_0=w_0*t^{2\rho^\vee}w_0$, and hence we can use associativity of operation $*$ to rewrite the element in the parenthesis on the left hand side as $w_0*(w_0*t^{2\rho^\vee}w_0)=(w_0*w_0)*t^{2\rho^\vee}w_0=w_0*t^{2\rho^\vee}w_0=w_0t^{2\rho^\vee}w_0=t^{-2\rho^\vee}$. Therefore, we finally deduce 
\[t^{-2\rho^\vee}\rhd t^{y\g}=t^{\mu_2}b_2.\]

This proves our claim.

\medskip
Therefore, the relations arising from (D1) are
\begin{equation}\label{D1}
    \g=b_1^{-1}\mu_1+\mu_2, y=a_1b_1=b_2^{-1}.
\end{equation}

Let us now utilize (D2). It says that we have length additivity in the decomposition 
$t^{y\g}=a_1t^{\mu_1}b_1 \cdot t^{\mu_2}b_2$, i.e.
\begin{equation*}
    \<\g,2\rho\>=\ell(a_1)+\<\mu_1,2\rho\>-\ell(b_1)+\<\mu_2,2\rho\>-\ell(b_2).
\end{equation*}

By use of \cref{D1}, the last equation reduces to 
\begin{equation}
    \ell(a_1b_1)=\ell(a_1)-\ell(b_1)+\<\mu_1-b_1^{-1}\mu_1,2\rho\>.
\end{equation}

Therefore we are led to consider the following subset of $\wtd W$ $$\Omega_y=\{at^{\mu}b \in \wtd W:y=ab, at^{\mu}b\leq t^{2\rho^\vee}, \ell(ab)=\ell(a)-\ell(b)+\<\mu-b^{-1}\mu,2\rho\>\}.$$

Note that the definition of $\Omega_y$ encapsulates all information
amongst $a_1,b_1,\mu_1$ coming out of the first two items in our desiderata. By (D3), the element $a_1t^{\mu_1}b_1$ is therefore uniquely specified as the maximal element of $\Omega_y$. Hence, the element $b_1^{-1}\mu_1$ depends only on $y$. Denoting it by $\mu_y$, we see that \cref{D1} implies $\mu_2 = \g - \mu_y$. This finishes the proof.
\end{proof}
\subsection{Finishing the proof}
\begin{proof}[Proof of \Cref{Generic-nu}] By combining \cref{lifting} and \cref{independence}, we observe that our standing assumption $ t^{\l}x \geq t^{y\g}$ implies
\begin{equation*}
    t^{\l-2\rho^{\vee}}x \geq t^{\g - \mu_y}y^{-1}.
\end{equation*}

Let us now choose $\l'$ to be large enough so that $\l+\l'$ is superregular, e.g. take $\l'=n\rho^\vee$ for large $n$. We note that 
\begin{enumerate}
    \item $\ell(t^{\l'}\cdot t^{\l-2\rho^{\vee}}x)=\ell(t^{\l+\l'-2\rho^{\vee}}x)=\<2\rho,\l+\l'-2\rho^{\vee}\>-\ell(x)=\<2\rho,\l'\>+\<2\rho,\l-2\rho^{\vee}\>-\ell(x)=\ell(t^{\l'})+\ell(t^{\l-2\rho^{\vee}}x).$ 
    \item $\ell(t^{\l'} \cdot t^{\g - \mu_y}y^{-1})=\ell(t^{\l'+\g - \mu_y}y^{-1})=\<2\rho,\l'+\g - \mu_y\>-\ell(y^{-1})=\<2\rho,\l'\>+\<2\rho,\g-\mu_y\>-\ell(y^{-1})=\ell(t^{\l'})+\ell(t^{\g - \mu_y}y^{-1}).$ 
\end{enumerate}

Hence, we can apply \cref{len3} in this situation, and get 
\begin{equation*}
     t^{\l+\l'-2\rho^{\vee}}x \geq t^{\g+\l' - \mu_y}y^{-1}.
\end{equation*}

Now, we use the `only if' part of \cref{lifting} to deduce from the previous equation
\begin{equation*}
    t^{\lambda+\lambda'}x \geq t^{y(\g+\l')}.
\end{equation*}

Therefore, by \cref{max-nu} we have $\nu_{t^{\l+\l'}x} \geq \nu_{t^{y(\g+\lambda')}}$. Since the formula for the maximal Newton point in \cite[Theorem 3.2]{Milicevic21} applies to elements of $\wtd W$ having $\l+\l'$ as its dominant translation part, we get that $\l+\l'-\wt(x) \geq \g+\lambda'$, whence $\l-\wt(x) \geq \g=\nu_w$. Combining this with \cref{easy >}, we get $\nu_w=\l-\wt(x)$. This finishes the proof.
\end{proof}
\subsection{A remark about Bruhat order on $\wtd W$}
\begin{proposition}\label{alt}
Suppose that $\bG$ is a quasi-simple split group. Assume $\text{depth}(\l) > \Xi$. Then $ut^\l v \geq t^{y\g}$ for some dominant $\g$ implies that $t^\l (v \lhd u) \geq t^\g$.
\begin{proof}
Note that $ut^\l v \geq t^{y\g}$ implies $\nu_{ut^\l v} \geq \nu_{t^{y\g}}$ by \cref{max-nu}. By \cref{Generic-nu} and \cref{wt}, we then have $\l-\text{wt}(v \lhd u) \geq \g$. By \cref{easy >}, we have that $t^\l (v \lhd u) \geq t^{\l-\text{wt}(v \lhd u)}$. Since $\l-\text{wt}(v \lhd u), \g$ are both dominant, we also have that $t^{\l-\text{wt}(v \lhd u)} \geq t^\g$, cf. \cite[proof of Proposition 3.5]{Rapoport05}. Putting together the last two inequalities, we finally get $t^\l (v \lhd u) \geq t^\g$.
\end{proof}
\begin{remark}\label{speculation}
Note that if $u=1$, this says that $t^\l v \geq t^{y\g}$ implies $t^\l v \geq t^\g$. In other words, if some $W$-conjugate of a dominant translation element lies below an element in the dominant chamber, so does the dominant translation element itself. We point out that in the presence of \cref{easy >} and the fact that maximal Newton point formula is available for elements with superregular translation part, this is equivalent to \cref{Generic-nu}. Hence, an independent proof of \cref{alt} would help us get rid of the bound imposed in \cref{sec:keylemma}; similarly, an improved estimate for $\CM$ would weaken the final depth hypothesis required in \cref{Generic-nu}.
\end{remark}
\end{proposition}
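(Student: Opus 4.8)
The plan is to turn the Bruhat relation $ut^\lambda v \geq t^{y\gamma}$ into a comparison of maximal Newton points, apply the formula of \Cref{Generic-nu}, and then lift the resulting inequality of dominant coweights back into $\wtd W$. First I would note that $\lambda$ is dominant regular, a consequence of $\text{depth}(\lambda) > \Xi > 0$; hence \Cref{reduction} gives $\nu_{ut^\lambda v} = \nu_{t^\lambda(v \lhd u)}$, and \Cref{Generic-nu}, whose hypothesis is precisely the standing depth bound, evaluates this to $\lambda - \wt(v \lhd u)$. On the other hand, from $t^{y\gamma} \leq ut^\lambda v$ and the description \cref{max-nu} of the maximal Newton point we get $\nu_{ut^\lambda v} \geq \nu(t^{y\gamma}) = (y\gamma)^+ = \gamma$, the last equality because $\gamma$ is dominant. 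Combining the two, we obtain the coweight inequality $\lambda - \wt(v \lhd u) \geq \gamma$.

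Next I would promote this back to a Bruhat relation in $\wtd W$. Applying \Cref{easy >} to $t^\lambda(v \lhd u)$ --- whose hypothesis $\text{depth}(\lambda) \geq \wtd\CM$ is subsumed by $\text{depth}(\lambda) > \Xi = \wtd\CM + \CS$ --- gives $t^\lambda(v \lhd u) \geq t^{\lambda - \wt(v \lhd u)}$. Since $\lambda - \wt(v \lhd u)$ is dominant (again by the depth bound, using $\wt(v \lhd u) \leq \wt(w_0)$ from \Cref{monotone-wt} together with the estimate on $\CM$ from \cref{sec:bd1}) and $\gamma$ is a dominant coweight with $\lambda - \wt(v \lhd u) \geq \gamma$, the standard fact that $t^\mu \geq t^{\mu'}$ for dominant coweights $\mu \geq \mu'$ (see the proof of \cite[proof of Proposition 3.5]{Rapoport05}) gives $t^{\lambda - \wt(v \lhd u)} \geq t^\gamma$. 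Transitivity of the Bruhat order then yields the desired $t^\lambda(v \lhd u) \geq t^\gamma$.

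I do not expect a genuine obstacle here: the statement is a formal consequence of \Cref{Generic-nu} together with the monotonicity packaged into \Cref{easy >} and \Cref{monotone-wt}, and all the delicate work --- the downward Demazure product computation of \Cref{independence} and the bookkeeping of depth constants --- has already been carried out upstream. The only point that warrants care is checking that the single value $\Xi$ recorded after \Cref{Generic-nu} simultaneously meets the hypotheses of the two results invoked, namely $\text{depth}(\lambda) > \Xi$ for \Cref{Generic-nu} and $\text{depth}(\lambda) \geq \wtd\CM$ for \Cref{easy >}; this is immediate from $\Xi = \wtd\CM + \CS \geq \wtd\CM$.
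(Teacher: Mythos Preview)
Your proposal is correct and follows essentially the same route as the paper: pass from the Bruhat relation to an inequality of maximal Newton points via \cref{max-nu}, compute $\nu_{ut^\lambda v}=\lambda-\wt(v\lhd u)$ (the paper cites \Cref{Generic-nu} together with \Cref{wt}, you go through \Cref{reduction} and then \Cref{Generic-nu}, which amounts to the same thing), and then lift back using \Cref{easy >} and the comparison of dominant translations. Your version is just slightly more explicit about why each depth hypothesis is met.
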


\section{Weight of the longest element}
In this section, we list out reduced quantum reflection decomposition for the longest element $w_0$ in Weyl group $W$ of each irreducible Cartan type. For an element $x \in W$, the \textit{reflection length} of $x$ is the smallest number $l$ such that $x$ can
be written as a product of $l$ reflections in $W$. We denote by $\ell_R(x)$ the reflection
length of $x$. Note that in general $\ell_R(x) \leq \ell_\downarrow(x)$, and strict inequality can occur.

We enumerate reflection length for $w_0$ for all the simple groups below.
\begin{center}\label{ref length}
\begin{tabular}{ |c|c|c|c|c|c|c|c|c|} 
 \hline
 Type & $A_n$ & $B_n/C_n$ & $D_n$ & $E_6$ & $E_7$ & $E_8$ & $F_4$ & $G_2$  \\ 
 \hline
 $\ell_R(w_0)$ & $\lceil \frac{n}{2} \rceil$ & $n$ & $2 \lfloor \frac{n}{2} \rfloor$ & $4$ & $7$ & $8$ & $4$ & $2$  \\
 \hline
\end{tabular}
\end{center}

\smallskip
We compute $\wt(w_0)$ by exhibiting suitable decomposition of $w_0$ in each type, and on the way we see that $\ell_R(w_0)=\ell_\downarrow(w_0)$. We first write down expression of the longest element affording its reflection length; for the classical types, we extract this from \cite{Blanco}, and for the exceptional ones we can check this directly by hand or using a computer algebra system such as \cite{Sagemath}. Once we find these decompositions, we see that they only involve reflections corresponding to quantum roots; furthermore, these expressions do satisfy the length additivity condition as well. Taken together, these observations establish that we have found reduced quantum reflection decomposition for $w_0$ in these types. Finally, we add up the coroots associated to the reflections appearing in each such decompositions and deduce $\wt(w_0)$ from that. See \cref{cascade} for an alternative recipe for computing $\wt(w_0)$.

\begin{enumerate}[(a)]\label{wt of w_0}
    \item \textbf{Type $A_n$}: here
    \begin{equation*}
     w_0=
    \begin{cases}
     s_{\a_1+\cdots+\a_{2k}}s_{\a_2+\cdots+\a_{2k-1}}\cdots s_{\a_k+\a_{k+1}}, & \text{if $n=2k$;}\\
     s_{\a_1+\cdots+\a_{2k+1}}s_{\a_2+\cdots+{\a_{2k}}}\cdots s_{\a_{k+1}}, & \text{if $n=2k+1$.}
    \end{cases}    
    \end{equation*}
    Hence, 
    \begin{equation*}
    \wt(w_0)=
    \begin{cases}
    \a_1^\vee+2\a_2^\vee+\cdots+(k-1)\a_{k-1}^\vee+k\a_k^\vee+k\a_{k+1}^\vee+(k-1)\a_{k+2}^\vee+\cdots+\a_{2k}^\vee, & \text{if $n=2k$;}\\
     \a_1^\vee+2\a_2^\vee+\cdots+k\a_k^\vee+(k+1)\a_{k+1}^\vee+k\a_{k+2}^\vee+\cdots+\a_{2k+1}^\vee, & \text{if $n=2k+1$.}
\end{cases}
\end{equation*}
    \item \textbf{Type $B_n$}: here
    \begin{equation*}
     w_0=
    \begin{cases}
     s_{\a_1+2\a_2+\cdots+2\a_{2k}}s_{\a_1}s_{\a_3+2\a_4+\cdots+2\a_{2k}}s_{\a_3}\cdots s_{\a_{2k-1}+2\a_{2k}}s_{\a_{2k-1}}, & \text{if $n=2k$;}\\
     s_{\a_1+2\a_2+\cdots+2\a_{2k+1}}s_{\a_1}s_{\a_3+2\a_4+\cdots+2\a_{2k+1}}s_{\a_3}\cdots s_{\a_{2k-1}+2\a_{2k}+2\a_{2k+1}}s_{\a_{2k+1}}, & \text{if $n=2k+1$.}
    \end{cases}    
    \end{equation*}
     Hence, 
    \begin{equation*}
    \wt(w_0)=
    \begin{cases}
    2\a_1^\vee + 2\a_2^\vee + 4\a_3^\vee + 4\a_4^\vee + \cdots + 2(k-1)\a_{2k-3}^\vee + 2(k-1)\a_{2k-2}^\vee\\ + 2k\a_{2k-1}^\vee + k\a_{2k}^\vee, & \text{if $n=2k$;}\\
     2\a_1^\vee+2\a_2^\vee+4\a_3^
     \vee+4\a_4^\vee+\cdots+2k\a_{2k-1}^\vee+2k\a_{2k}^\vee+(k+1)\a_{2k+1}^\vee, & \text{if $n=2k+1$.}
\end{cases}
\end{equation*}
    \item \textbf{Type $C_n$}: here $w_0=s_{2\a_1+\cdots+2\a_{n-1}+\a_n}s_{2\a_2+\cdots+2\a_{n-1}+\a_n}\cdots s_{2\a_{n-1}+\a_n}s_{\a_n}$, and thus 
    \begin{equation*}
        \wt(w_0)=\a_1^\vee+2\a_2^\vee+\cdots+n\a_n^\vee.
    \end{equation*}
    \item \textbf{Type $D_n$}: 
     here
    \begin{equation*}
     w_0=
    \begin{cases}
     s_{\a_1+2\a_2+\cdots+2\a_{2k-2}+\a_{2k-1}+\a_{2k}}s_{\a_1}s_{\a_3+2\a_4+\cdots+2\a_{2k-2}+\a_{2k-1}+\a_{2k}}\\ s_{\a_3}\cdots s_{\a_{2k-3}+2\a_{2k-2}+\a_{2k-1}+\a_{2k}}s_{\a_{2k-3}}s_{\a_{2k}}, & \text{if $n=2k$;}\\
     s_{\a_1+2\a_2+\cdots+2\a_{2k-2}+\a_{2k-1}+\a_{2k}}s_{\a_1}s_{\a_3+2\a_4+\cdots+2\a_{2k-2}+\a_{2k-1}+\a_{2k}}\\ s_{\a_3}\cdots s_{\a_{2k-3}+2\a_{2k-2}+2\a_{2k-1}+\a_{2k}+\a_{2k+1}}s_{\a_{2k-1}+\a_{2k}+\a_{2k+1}}s_{\a_{2k-1}}, & \text{if $n=2k+1$.}
    \end{cases}    
    \end{equation*}
     Hence,
     \begin{equation*}
     \wt(w_0)=
     \begin{cases}
      2\a_1^\vee + 2\a_2^\vee + 4\a_3^\vee + 4\a_4^\vee + \cdots+ (2k-2)\a_{2k-3}^\vee + (2k-2)\a_{2k-2}^\vee + k\a_{2k-1}^\vee \\+ k\a_{2k}^\vee, & \text{if $n=2k$;}\\
      2\a_1^\vee + 2\a_2^\vee + 4\a_3^\vee + 4\a_4^\vee + \cdots+ (2k-2)\a_{2k-3}^\vee + (2k-2)\a_{2k-2}^\vee+2k\a_{2k-1}^\vee \\+k\a_{2k}^\vee + k\a_{2k+1}^\vee, & \text{if $n=2k+1$.}
     \end{cases}
     \end{equation*}

    \item \textbf{Type $E_6$}: here $w_0=s_{\a_1+2\a_2+2\a_3+3\a_4+2\a_5+\a_6}s_{\a_1+\a_3+\a_4+\a_5+\a_6}s_{\a_3+\a_4+\a_5}s_{\a_4}$, and thus
    \begin{equation*}
     \wt(w_0)=2\a_1^\vee+2\a_2^\vee+4\a_3^\vee+6\a_4^\vee+4\a_5^\vee+2\a_6^\vee.  
    \end{equation*}
   \item \textbf{Type $E_7$}: here $w_0=s_{2\a_1+2\a_2+3\a_3+4\a_4+3\a_5+2\a_6+\a_7}s_{\a_2+\a_3+2\a_4+2\a_5+2\a_6+\a_7}s_{\a_2+\a_3+2\a_4+\a_5}s_{\a_2}$\\$s_{\a_3}s_{\a_5}s_{\a_7}$, and thus
   \begin{equation*}
       \wt(w_0)=2\a_1^\vee+5\a_2^\vee+6\a_3^\vee+8 \a_4^\vee+7\a_5^\vee+4\a_6^\vee+3\a_7^\vee.
   \end{equation*}
   \item \textbf{Type $E_8$}: here $w_0= s_{2\a_1+3 \a_2+4\a_3+6\a_4+5\a_5+4\a_6+3\a_7+2\a_8}s_{2\a_1+2\a_2+3\a_3+4\a_4+
    3\a_5+2\a_6+\a_7}$\\$s_{\a_2+\a_3+2\a_4+2\a_5+2\a_6 +\a_7}s_{\a_2+\a_3+2\a_4+\a_5}s_{\a_2}s_{\a_3}s_{\a_5}s_{\a_7}$, and thus 
    \begin{equation*}
        \wt(w_0)=4\a_1^\vee + 8\a_2^\vee + 10\a_3^\vee + 14\a_4^\vee + 12\a_5^\vee + 8\a_6^\vee + 6\a_7^\vee + 2\a_8^\vee.
    \end{equation*}
    \item \textbf{Type $F_4$}: here $w_0=s_{2\a_1+3\a_2+4\a_3+2\a_4}s_{\a_2+2\a_3+2\a_4}s_{\a_2+2\a_3}s_{\a_2}$, and thus 
    \begin{equation*}
        \wt(w_0)=2\a_1^\vee+6\a_2^\vee+4\a_3^\vee+2\a_4^\vee.
    \end{equation*}
    \item \textbf{Type $G_2$}: here $w_0=s_{3\a_1+2\a_2}s_{\a_1}$, and thus \begin{equation*}
        \wt(w_0)=2\a_1^\vee+2\a_2^\vee.
    \end{equation*}
\end{enumerate}
\begin{remark}\label{cascade}
We note that the weights of longest element found above appear in a rather different context, cf. \cite{Lusz18}. In that paper, Lusztig points out that the coroots appearing as a summand of $\wt(w_0)$ as above form a so-called \emph{cascade} (terminology due to Kostant). He lists out these coroots and their sums in loc. sit. section 1.2 and section 1.8. In fact, a map $x \mapsto r_x$ is defined from the set of involutions $\CI_W$ in an irreducible Weyl group $W$ in loc. sit. via the notion of cascade, and this is crucially used in constructing certain lifts of involutions in associated reductive groups. We compare these two maps defined on the set of involutions in \cref{sec:wt=r}.
\end{remark}

\section{Covering relation in affine Weyl group}\label{sec:cover}
The goal of this section is to prove the following result.
\begin{theorem}\label{cover}
Suppose that $W$ is an irreducible finite Weyl group. Assume that
\begin{equation}\label{cover-depth}
depth(\lambda) \geq 
\begin{cases}
3, & \text{if $W \text{is of simply laced type}$;}\\
4, & \text{if $W \text{is of non-simply laced type but not of type}~G_2$;}\\
6, & \text{if $W \text{is of type}~G_2$.}
\end{cases}
\end{equation} 
Then $w:=t^{\lambda}y \gtrdot w':=t^{m\alpha^{\vee}}s_{\alpha} w$ if and only if one of the following holds.
\begin{enumerate}
    \item $m=1$ and $\ell(s_{\alpha})=\langle 2\rho, \alpha^{\vee} \rangle -1$; in this case, $w'=s_{\alpha}t^{\lambda-\alpha^{\vee}}y$.
    \item $m=\langle \alpha, \lambda \rangle$ and $\ell(s_{\alpha}y)=\ell(y)+1$; in this case, $w'=t^{\lambda}s_{\alpha}y$.
    \item $m=\langle \alpha, \lambda \rangle -1$, $\ell(s_{\alpha}y)=\ell(y)-\langle 2\rho, \alpha^{\vee} \rangle +1$; in this case, $w'=t^{\lambda-\alpha^{\vee}}s_{\alpha}y$.
\end{enumerate}
\end{theorem}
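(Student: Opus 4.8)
The plan is to turn the covering relation into a single length identity, rewrite $w'$ in two complementary ways, confine the translation part of $w'$ by a length estimate, and then solve the identity. First, since $r_{\alpha,m}=r_{-\alpha,-m}$ we may take $\alpha\in\Phi^+$, and since $w$ and $w'=r_{\alpha,m}w$ differ by a reflection they are Bruhat-comparable; hence $w\gtrdot w'$ is equivalent to $\ell(w')=\ell(w)-1$, and it suffices to decide for which $(\alpha,m)$ this holds. Because $\text{depth}(\lambda)\ge 3$, the coweight $\lambda$ is regular dominant, so $\ell(w)=\langle 2\rho,\lambda\rangle-\ell(y)$ by \cref{len1}. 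Conjugating $t^{m\alpha^\vee}$ past $s_\alpha$ and moving $s_\alpha$ past $t^\lambda$ give
\[
w'=s_\alpha\, t^{\lambda-m\alpha^\vee}\, y \;=\; t^{\lambda-k\alpha^\vee}\, s_\alpha y,\qquad k:=\langle\alpha,\lambda\rangle-m .
\]
A short computation of the sign of $w^{-1}=y^{-1}t^{-\lambda}$ on the positive affine root attached to $r_{\alpha,m}$ shows that $\ell(w')<\ell(w)$ already forces $1\le m\le\langle\alpha,\lambda\rangle$, with the extra requirement $\ell(s_\alpha y)>\ell(y)$ when $m=\langle\alpha,\lambda\rangle$; so I restrict to this range.

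The heart of the argument is to confine the translation part $\nu:=\lambda-k\alpha^\vee$ of $w'$ to two Weyl chambers. Write $\nu=z\nu^+$ with $\nu^+$ dominant and $z\in W$ of minimal length, so $w'=z\,t^{\nu^+}(z^{-1}s_\alpha y)$; then \cref{len1}, together with its standard variant for singular $\nu^+$ built from the stabiliser parabolic $W_J$ of $\nu^+$, expresses $\ell(w')$ as $\langle 2\rho,\nu^+\rangle$ up to an error controlled by $\ell(z)$, $\ell(y)$ and the longest element of $W_J$. Consequently $\ell(w')=\ell(w)-1$ forces $0\le\langle 2\rho,\lambda\rangle-\langle 2\rho,\nu^+\rangle\le\ell(z)+\ell(y)+1$. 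Now $k\mapsto\langle 2\rho,(\lambda-k\alpha^\vee)^+\rangle=\sum_{\gamma\in\Phi^+}\lvert\langle\gamma,\lambda\rangle-k\langle\gamma,\alpha^\vee\rangle\rvert$ is convex in $k$ and equals $\langle 2\rho,\lambda\rangle$ at both $k=0$ and $k=\langle\alpha,\lambda\rangle$, while $\ell(z)$ counts the hyperplanes separating the dominant cone from $\nu$. Comparing the rate at which $\langle 2\rho,\nu^+\rangle$ decreases with the rate at which $\ell(z)$ grows --- this is where the hypotheses $\text{depth}(\lambda)\ge 3$ (simply laced), $\ge 4$ (non-simply laced, not $G_2$), $\ge 6$ ($G_2$) enter, verified type by type --- one deduces that the identity can hold only if $\lambda-k\alpha^\vee$ is dominant (forcing $k$ small) or $\lambda-m\alpha^\vee$ is dominant (forcing $m$ small), i.e.\ $\nu\in\overline{\mathcal{C}^+}$ or $\nu\in\overline{\mathcal{C}_{s_\alpha}}$. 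For $\alpha$ simple these two ranges of $k$ already cover $1\le m\le\langle\alpha,\lambda\rangle$; for $\alpha$ non-simple the convexity estimate eliminates the genuine intermediate values of $m$.

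In the first surviving case, $\lambda-k\alpha^\vee$ dominant, apply \cref{len1} to $w'=t^{\lambda-k\alpha^\vee}s_\alpha y$: the identity becomes $k\langle 2\rho,\alpha^\vee\rangle=\ell(y)+1-\ell(s_\alpha y)$, and since $\lvert\ell(y)-\ell(s_\alpha y)\rvert\le\ell(s_\alpha)\le\langle 2\rho,\alpha^\vee\rangle-1$ this forces $k\in\{0,1\}$, yielding conclusion (2) (the case $k=0$, with $\ell(s_\alpha y)=\ell(y)+1$) and conclusion (3) (the case $k=1$, with $\ell(s_\alpha y)=\ell(y)-\langle 2\rho,\alpha^\vee\rangle+1$). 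In the second case, $\lambda-m\alpha^\vee$ dominant, apply \cref{len1} to $w'=s_\alpha t^{\lambda-m\alpha^\vee}y$: the identity becomes $m\langle 2\rho,\alpha^\vee\rangle=\ell(s_\alpha)+1$, which forces $m=1$ and $\ell(s_\alpha)=\langle 2\rho,\alpha^\vee\rangle-1$, i.e.\ $\alpha$ a quantum root --- this is conclusion (1). The borderline subcases where $\lambda-k\alpha^\vee$ or $\lambda-m\alpha^\vee$ is dominant but singular are treated by the same computation with the singular form of \cref{len1} and are checked to contribute no further covering relations; this is the point at which the present argument improves on \cite{Milicevic21}. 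For the converse, in each of (1), (2), (3) one writes $w'$ as $u\,t^{\lambda'}v$ with $\lambda'\in\{\lambda,\lambda-\alpha^\vee\}$, which is regular dominant under the depth hypothesis, so \cref{len1} gives $\ell(w')=\ell(w)-1$ exactly, and since $w$ and $w'$ differ by $r_{\alpha,m}$ we get $w\gtrdot w'$.

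The principal obstacle is the estimate of the second paragraph: ruling out all intermediate values of $m$, and disposing of the singular borderline subcases, under only a constant lower bound on $\text{depth}(\lambda)$ rather than the quadratic bound of \cite{Milicevic21}, is what forces the case-by-case verification underlying the constants $3$, $4$, $6$.
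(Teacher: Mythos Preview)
Your overall architecture matches the paper's: restrict $1\le m\le\langle\alpha,\lambda\rangle$, write $w'$ with dominant translation part via some $z\in W$ and the stabiliser parabolic $W_J$, use convexity of $k\mapsto\ell(t^{\lambda-k\alpha^\vee})$ to confine $\lambda-m\alpha^\vee$ to $\overline{\CC^+}\cup\overline{\CC_{s_\alpha}}$, then finish the two surviving cases with \cref{len1}. The gap is in your central estimate. You assert that the cocover condition yields
\[
\langle 2\rho,\lambda\rangle-\langle 2\rho,\nu^+\rangle\le \ell(z)+\ell(y)+1,
\]
and then say the confinement follows ``verified type by type''. But this bound still contains $\ell(y)$, which can be as large as $\ell(w_0)$; combined with the convexity lower bound $k_{\lambda,\alpha}\langle 2\rho,\alpha^\vee\rangle$, it only forces $k_{\lambda,\alpha}$ to be bounded by something of order $\ell(w_0)$, i.e.\ it recovers at best the quadratic depth hypothesis of \cite{Milicevic21}, not the constants $3,4,6$. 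No type-by-type check can repair this, because the dependence on $y$ is the problem, not the root-system constants.

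The paper removes the $\ell(y)$ term by two structural observations you are missing. First, it shows that any $z$ with $\lambda-m\alpha^\vee\in\overline{\CC_z}$ satisfies $z\prec_{\mathrm{right}} s_\alpha$, hence $\ell(s_\alpha z)+\ell(z)=\ell(s_\alpha)$ (\cref{bd-z}). Second, writing $y=z\zeta_J\zeta^J$ and expanding $\ell(s_\alpha z\zeta_J)$ and $\ell(y)$ via inversion sets, the $\ell(\zeta^J)$ and $\ell(\zeta_J)$ contributions cancel up to a term $|\mathrm{Inv}(s_\alpha z)^c\cap\mathrm{Inv}(z)^c\cap\mathrm{Inv}(\zeta_J^{-1})|$, which is shown to vanish (\cref{bd-z-copy}). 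These two facts together replace your bound by the $y$-free inequality
\[
\ell(t^\lambda)-\ell(t^{\nu^+})\le 1+\ell(s_\alpha)\le\langle 2\rho,\alpha^\vee\rangle,
\]
and it is \emph{this} inequality, compared against $k_{\lambda,\alpha}\langle 2\rho,\alpha^\vee\rangle$, that forces $z\in\{1,s_\alpha\}$ under a constant depth bound. The only genuinely type-dependent step is then tiny: for non-simply laced types one uses $k_{\lambda,\alpha}\ge 2$, and for simply laced types one uses that $\langle\beta,\alpha^\vee\rangle=2$ implies $\beta=\alpha$. Your sketch of the two endgame cases is fine once this sharper inequality is in hand.
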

\begin{remark}
We explain how \cref{cover} implies theorem B. Let us first note that we can again reduce to the case of irreducible factors as in \cref{reduction1}. Notice that if $\l$ is regular, $\ell(ut^\l v)=\ell(u)+\ell(t^\l v)$ by \cref{len1}. Choose reduced expressions $u=s_{i_1}\cdots s_{i_l}$ and $t^\l v=s_{j_1}\cdots s_{j_k}\z $, where $s_{i_p},s_{j_q} \in \BS$ for $1 \leq p \leq l, 1\leq q \leq k$ and $\z \in \Omega$. Then $ut^\l v=s_{i_1}\cdots s_{i_l}s_{j_1}\cdots s_{j_k}\z$ is a reduced expression, and a cocover of $ut^\l v$ must be of the form of either $s_{i_1}\cdots \widehat{s_{i_m}} \cdots s_{i_l}s_{j_1}\cdots s_{j_k}\z$ for some $p \in [1,l]$ or $s_{i_1} \cdots s_{i_l}s_{j_1}\cdots \widehat{s_{j_n}} \cdots s_{j_k}\z$ for some $q\in [1,k]$. In other words, any cocover of $ut^\l v$ is obtained by 
\begin{itemize}
    \item (C1) either multiplying a cocover of $u$ with $t^\l v$, or
    \item (C2) multiplying $u$ with a cocover of $t^\l v$.
\end{itemize}
The procedure listed in (C1) is easy to describe. A cocover of $u$ is of the form $us_\a$ for some $\a \in \Phi^+$ such that $\ell(us_\a)=\ell(u)-1$. This corresponds to case (1) in theorem B. We claim that the three other cases there, i.e. (2)-(4), come from procedure described in (C2), and hence they correspond to those listed in \cref{cover}. Clearly, the third and fourth case in theorem B corresponds respectively to the second and third case in \cref{cover}. Finally, suppose that $w':=s_at^{\l-\a^\vee}v$ is a cocover of $w:=t^\l v$ such that $uw'$ is a cocover of $uw$. By \cref{len1}, $$\ell(uw')=\ell(us_a)+\ell(t^{\l-\a^\vee})-\ell(v)=\ell(us_a)+\<2\rho,\l-\a^\vee\>-\ell(v).$$
We use dominance of $\l-\a^\vee$ in the last equality; this is true as $\<\a_i,\l-a^\vee\>=\<\a_i,\l\>-\<\a_i,\a^\vee\> \geq 0$ for any simple root $a_i$ - since the maximum value of $\<\a_i,\a^\vee\>$ equals $2$ in every Cartan type, except for $G_2$ - in which case it equals $3$, cf. \cite[Chapter VI, Section 3, no. 1]{Bou}. Note that 
\begin{equation}\label{interim}
    \ell(us_\a) \leq \ell(u)+\ell(s_a) \leq \ell(u)+\<2\rho,\a^\vee\>-1.
\end{equation}
Since $\ell(ut^\l v)=\ell(u) +\<2\rho,\l\>-\ell(v)$, the cocover condition $uw \gtrdot uw'$ gives 
$$\ell(u)+\<2\rho,\l\>-\ell(v)-1=\ell(us_a)+\<2\rho,\l-\a^\vee\>-\ell(v) \leq \ell(u)+\<2\rho,\a^\vee\>-1+\<2\rho,\l-\a^\vee\>-\ell(v).$$
Hence we deduce that all of the inequalities in \cref{interim} must be equality, and thus we get $\ell(us_\a)=\ell(u)+\<2\rho,\a^\vee\>-1$. This is exactly the situation listed in the second item in theorem B.
\end{remark}

We will only focus on proving the necessity of the above conditions. For the sufficiency part, the argument in \cite[Section 4]{Milicevic21} toward the end of proposition $4.2$ can be applied. 

\smallskip
We divide our discussion of the proof in the following subsections. 
\subsection{Some useful inequalities}
In this subsection, we lay out an estimate of some relevant quantities that we shall use in the next subsection. We shall resume the assumption on the depth of $\l$ stated in \cref{cover} throughout our discussion after the first lemma below.
\begin{lemma}\label{bd-m}
Suppose $w \gtrdot w'$. Then we must have $w'=t^{m\alpha^{\vee}}s_{\alpha} w$ for some affine root $\alpha+m$ with $1 \leq m \leq \<\alpha,\lambda\>$.
\end{lemma}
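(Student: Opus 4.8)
The plan is to prove the lemma by combining the standard structure of cocovers in a Coxeter group with an elementary estimate of the linear functional $\langle\alpha,\cdot\rangle$ on two alcoves. Since the Bruhat order and the covering relation on $\wtd W = W_a \rtimes \Omega$ depend only on the $W_a$-component, the hypothesis $w \gtrdot w'$ yields a covering relation between the associated elements of the Coxeter group $W_a$. By the subword property of Bruhat order, $w'$ is obtained from a reduced expression for $w$ by deleting one letter; conjugating the relevant prefix shows that $w' = r w$ for a reflection $r \in W_a$ with $\ell(w') = \ell(w)-1$. Every reflection of $W_a$ is of the form $s_{\alpha,m} = t^{m\alpha^\vee}s_\alpha$ for a unique $\alpha \in \Phi^+$ and $m \in \BZ$ (equivalently, it is the reflection along the affine root $\alpha + m$), so $w' = t^{m\alpha^\vee}s_\alpha w$; it then remains only to establish the bound $1 \le m \le \langle\alpha,\lambda\rangle$.

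For the bound I would invoke the standard geometric criterion that, for a reflection $r$ along the affine hyperplane $H$, one has $\ell(rw) < \ell(w)$ if and only if $H$ separates the base alcove $\ba$ from $w\ba$. Here $H = H_{\alpha,m} = \{x : \langle\alpha,x\rangle = m\}$, and since $\ell(w') = \ell(w)-1 < \ell(w)$, this hyperplane separates $\ba$ from $w\ba = t^\lambda y\ba$. Now I would simply read off the range of $\langle\alpha,\cdot\rangle$ on each alcove: on $\ba$ (which lies in the open dominant chamber) one has $0 < \langle\gamma,x\rangle \le \langle\theta,x\rangle < 1$ for every $\gamma \in \Phi^+$, hence $|\langle\gamma,x\rangle| < 1$ for every root $\gamma$; while for $x \in w\ba$, writing $x = \lambda + y(x')$ with $x' \in \ba$ gives $\langle\alpha,x\rangle = \langle\alpha,\lambda\rangle + \langle y^{-1}\alpha, x'\rangle \in \bigl(\langle\alpha,\lambda\rangle - 1,\, \langle\alpha,\lambda\rangle + 1\bigr)$. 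Under the depth hypothesis $\lambda$ is in particular regular dominant, so $\langle\alpha,\lambda\rangle \ge 1$ for every $\alpha\in\Phi^+$. Consequently, if $m \le 0$ then $\ba$ and $w\ba$ both lie in $\{\langle\alpha,\cdot\rangle > m\}$, and if $m \ge \langle\alpha,\lambda\rangle+1$ then both lie in $\{\langle\alpha,\cdot\rangle < m\}$; in either case $H_{\alpha,m}$ does not separate them, contradicting $\ell(w')<\ell(w)$. Hence $1 \le m \le \langle\alpha,\lambda\rangle$, which is the assertion.

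I do not expect a genuine obstacle here: the argument is purely combinatorial and geometric, and the depth hypothesis is used only in the weak form $\langle\alpha,\lambda\rangle\ge 1$ for all $\alpha\in\Phi^+$. The only steps requiring care are bookkeeping ones — reconciling the sign convention identifying the affine root $\alpha+m$ with the element $t^{m\alpha^\vee}s_\alpha$, keeping track of the passage between $\wtd W$, $W_a$ and $\Omega$, and citing the precise forms of the subword property and of the geometric length criterion (e.g. from \cite{BB05}).
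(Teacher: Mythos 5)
Your argument is correct and follows essentially the same route as the paper's: both identify the cocover as left multiplication by an affine reflection whose hyperplane must separate $\ba$ from $w\ba$ (the paper via an explicit reduced gallery, you via the subword property plus the standard separating-hyperplane length criterion), and then bound $m$ by locating the hyperplanes $H_{\alpha,m}$ between the two alcoves. Your explicit evaluation of $\langle\alpha,\cdot\rangle$ on $\ba$ and $t^{\lambda}y\ba$ just spells out the paper's one-line count that there are exactly $\langle\alpha,\lambda\rangle$ such hyperplanes.
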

\begin{proof}
Let $w=s_{i_1}\cdots s_{i_l}$ be a reduced expression, where $s_{i_j}\in \tS$. This describes a reduced gallery from $\ba$ to $w\ba$ via the chain of adjacent alcoves $\ba \rightarrow s_{i_1}\ba \rightarrow \cdots \rightarrow s_{i_1}\cdots s_{i_l}\ba$. Since the gallery is reduced, it cannot cross any hyperplane twice - hence all the alcoves lie in the dominant chamber. Let $r_j$ be the affine reflection with respect to the common wall between $\ba_{j-1}:=s_{i_1}\cdots s_{i_{j-1}}\ba$ and $\ba_j:=s_{i_1}\cdots s_{i_j}\ba$, that is $r_j=s_{i_1}\cdots s_{i_{j-1}}s_{i_j}s_{i_{j-1}} \cdots s_{i_1}$. 

\smallskip
Now suppose that $w'=s_{i_1}\cdots \widehat{s_{i_k}} \cdots s_{i_l}$ is a cocover; as before, let $\{\ba_j': 1 \leq j \leq l-1\}$ be the collection of alcoves describing the reduced gallery corresponding to this expression of $w'$ and let $r_j'$ be the associated affine reflections. Then we see that $r_j=r_j'$ and $\ba_j=\ba_j'$ for $1 \leq j \leq k-1$, and the rest of the gallery for $w'$ is the reflection of portion of the gallery for $w$ from $\ba_{k+1}$ onward with respect to the hyperplane corresponding to $r_k$. This is because for $j \geq k$ we have
\begin{equation*}
\ba_{j-1}'=s_{i_1}\cdots s_{i_{k-1}}s_{i_{k+1}}\cdots s_{i_j}\ba=s_{i_1}\cdots s_{i_{k-1}}s_{i_k}s_{i_{k-1}}\cdots s_{i_1}(s_{i_1}\cdots s_{i_j}\ba)=r_k(\ba_j).
\end{equation*}
In particular, $w'=r_k w$. 

\smallskip
Therefore, to create a cocover of $w$, we must reflect $w\ba$ with respect to some hyperplane $H_{\alpha-m}$ lying between $\ba$ and itself. Since the number of hyperplanes between the alcove $w\ba$ and the wall $H_\alpha$ is $\< \a,\l \>$, this restricts the possibility of $m$ to asserted values above.
\end{proof}
Now, $w'=t^{m\alpha^{\vee}}s_{\alpha} w=s_\alpha t^{\lambda-m\alpha^\vee}y$. By \cref{bd-m}, the coweight associated to the translation part of $w'$ belongs to following list of coweights:
\begin{equation}\label{list}
    \lambda-\alpha^{\vee}, \lambda-2\alpha^{\vee},\cdots,\lambda-(\langle \alpha,\lambda \rangle-1)\alpha^\vee=s_{\alpha}(\lambda-\alpha^{\vee}),\lambda-\langle \alpha, \lambda \rangle \alpha^{\vee}=s_{\alpha}(\lambda).
\end{equation}

Let us now define $$k=k_{\lambda,\alpha}:=\text{max} \{m: \lambda-m\alpha^\vee \in \overline{\CC^+},1 \leq m \leq \<\alpha,\lambda\>, m \in \BZ\}.$$ 

We now give a lower bound for $k$. 
\begin{lemma}\label{bd-k}
We have that 
\begin{equation*}
k \geq 
\begin{cases}
1, & \text{if $W \text{is of simply laced type}$;}\\
2, & \text{if $W \text{is of non-simply laced type}$.}
\end{cases}
\end{equation*} 
\end{lemma}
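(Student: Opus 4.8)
The plan is to produce, for the value of $m$ claimed by the lemma — namely $m=1$ in the simply laced case and $m=2$ in the non-simply laced case — an explicit verification that $\l-m\alpha^{\vee}$ still lies in $\overline{\CC^+}$, together with the observation that $m\le\langle\alpha,\l\rangle$. These two facts exhibit $m$ as a member of the set defining $k$, whence $k\ge m$.

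The verification rests on two elementary estimates that I would record first. One: writing $\alpha=\sum_i n_i\alpha_i$ with $n_i\in\BZ_{\ge 0}$ and $\sum_i n_i\ge 1$, dominance of $\l$ gives $\langle\alpha,\l\rangle=\sum_i n_i\langle\alpha_i,\l\rangle\ge\text{depth}(\l)$, so under \eqref{cover-depth} we always have $\langle\alpha,\l\rangle\ge 3\ge 2$; in particular both $m=1$ and $m=2$ lie in the range $1\le m\le\langle\alpha,\l\rangle$ appearing in the definition of $k$. Two: for a simple root $\alpha_i$ and a positive root $\alpha$, the Cartan integer $\langle\alpha_i,\alpha^{\vee}\rangle$ is at most $2$, except in type $G_2$ where it can be $3$ (cf. \cite[Chapter~VI, Section~1, no.~3]{Bou}).

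Granting these, the argument is a one-line estimate: for every simple root $\alpha_i$,
\[
\langle\alpha_i,\,\l-m\alpha^{\vee}\rangle=\langle\alpha_i,\l\rangle-m\langle\alpha_i,\alpha^{\vee}\rangle\ge\text{depth}(\l)-mc,
\]
where $c=3$ in type $G_2$ and $c=2$ otherwise. The three lower bounds in \eqref{cover-depth} are calibrated precisely to make the right-hand side nonnegative: $3-1\cdot 2\ge 0$ in the simply laced case (with $m=1$), $4-2\cdot 2\ge 0$ in the non-simply laced non-$G_2$ case (with $m=2$), and $6-2\cdot 3\ge 0$ in type $G_2$ (with $m=2$). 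Hence $\l-m\alpha^{\vee}\in\overline{\CC^+}$, and combined with the first estimate this yields $k\ge m$, as asserted.

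There is no serious obstacle here; the only points requiring attention are (a) the inequality $\langle\alpha,\l\rangle\ge\text{depth}(\l)$, which genuinely uses dominance of $\l$ and the nonnegativity of the coefficients $n_i$, and (b) the fact that the bound on $\langle\alpha_i,\alpha^{\vee}\rangle$ degrades from $2$ to $3$ exactly in type $G_2$ — this is what forces the case split in \eqref{cover-depth} and explains why the depth hypothesis is weakest in the simply laced case and strongest in type $G_2$.
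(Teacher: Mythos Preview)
Your proof is correct and uses essentially the same ingredients as the paper's---the bound $\langle\alpha_i,\alpha^\vee\rangle\le 2$ (or $3$ in type $G_2$) combined with the depth hypothesis \eqref{cover-depth}. The paper argues slightly differently: rather than exhibiting the specific $m$ in the defining set for $k$, it locates the real number $\tilde m\in[k,k+1)$ at which the ray $m\mapsto\lambda-m\alpha^\vee$ first hits a wall of $\CC^+$, obtaining $k=\lfloor\langle\alpha_i,\lambda\rangle/\langle\alpha_i,\alpha^\vee\rangle\rfloor$ for the relevant simple root $\alpha_i$, and then reads off the bound. Your direct verification is a touch cleaner and has the minor advantage of explicitly checking that $m\le\langle\alpha,\lambda\rangle$, so the defining set is visibly nonempty.
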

\begin{proof}
By the definition of $k$, there is a real number $\wtd m$ with $k \leq \wtd m < k+1$ such that $\lambda-\wtd m \alpha^\vee$ lies on at least one of the walls of $\CC^+$. It follows that for some simple root $\alpha_i$, we have $\< \alpha_i, \lambda - \wtd m \alpha^{\vee}\rangle=0$. Therefore, $\wtd m=\frac{\<\alpha_i,\lambda\>}{\<\alpha_i,\alpha^\vee\>}$ - hence giving $k=\lfloor \frac{\<\alpha_i,\lambda\>}{\<\alpha_i,\alpha^\vee\>} \rfloor$. Recalling the estimate about maximum value of $\langle \alpha^{\vee},\alpha_i \rangle$ in each Cartan type, we get the desired bound on $k$ from the depth condition on $\lambda$.
\end{proof}

\smallskip
Our next lemma estimates the length of the translation elements corresponding to the coweights in the list (6.1). This proof closely follows a part of the proof of \cite[Proposition 4.2]{Milicevic21}. We include it here for completeness' sake.
\begin{lemma}\label{bd-ell}
Suppose that for some integer $ m \in [1,\< \alpha,\lambda \>]$, $\lambda- m \alpha^\vee$ lies in the closure of a chamber different from $\CC^+$ or $\CC_{s_\alpha}$. Then $\ell(t^{\lambda- m \alpha^{\vee}})\leq \ell(t^{\lambda-k\alpha^{\vee}})$.
\end{lemma}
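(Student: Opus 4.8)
The plan is to exploit the symmetry of the translation-length function along the line through $\l$ and $s_\a\l$. Recall that for every $\nu\in X_*(T)$ one has $\ell(t^\nu)=\sum_{\b\in\Phi^+}|\<\b,\nu\>|$, so that $\ell(t^\nu)$ depends only on the $W$-orbit of $\nu$ (i.e.\ $\ell(t^\nu)=\ell(t^{x\nu})$ for all $x\in W$) and equals $\<2\rho,\nu\>$ when $\nu$ is dominant. Put $c=\tfrac12\<\a,\l\>$ and consider the function $g(\t)=\ell(t^{\l-\t\a^\vee})$ of a real variable $\t\in[0,2c]$. By the displayed formula $g$ is a sum of absolute values of affine functions of $\t$, hence convex; and since $s_\a(\l-\t\a^\vee)=\l-(2c-\t)\a^\vee$, the $W$-invariance of $\ell(t^\nu)$ gives $g(2c-\t)=g(\t)$, i.e.\ $g$ is symmetric about its midpoint $\t=c$.

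The lemma asserts $g(m)\le g(k)=\ell(t^{\l-k\a^\vee})$. The key reduction is that the integer $m$ of the statement necessarily lies in the symmetric interval $[k,2c-k]$; granting this, write $m=\theta k+(1-\theta)(2c-k)$ with $\theta\in[0,1]$, and convexity together with $g(2c-k)=g(k)$ gives $g(m)\le\theta g(k)+(1-\theta)g(2c-k)=g(k)$, which is the claim. (Note that $k\le c$ automatically, since $\l-k\a^\vee$ being dominant forces $\<\a,\l-k\a^\vee\>\ge0$, so the interval $[k,2c-k]$ is nonempty and contains $k$.)

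It therefore remains to show $k\le m\le 2c-k$. The set of real $\t\ge0$ with $\l-\t\a^\vee\in\overline{\CC^+}$ is a closed interval $[0,T]$ with $T\le c$ (a finite intersection of closed half-spaces), and for every $\t<T$ the point $\l-\t\a^\vee$ lies in the \emph{open} chamber $\CC^+$: each coordinate $\<\a_i,\l-\t\a^\vee\>$ is affine in $\t$, positive at $\t=0$ by regularity of $\l$, and on $[0,T]$ it can vanish only at $\t=T$. By \cref{bd-k} (this is where the depth hypothesis enters) we have $k\ge1$, and since $k$ is the largest integer in $[1,\<\a,\l\>]$ with $\l-k\a^\vee\in\overline{\CC^+}$ while $T\le c<\<\a,\l\>$, we get $k=\lfloor T\rfloor\le T$. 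Hence $m<k$ would give $m<T$, so $\l-m\a^\vee\in\CC^+$, contradicting the hypothesis that $\l-m\a^\vee$ lies in the closure of a chamber $\CC_x$ with $x\ne1$; symmetrically, using $\l-m\a^\vee=s_\a\bigl(\l-(2c-m)\a^\vee\bigr)$, the inequality $m>2c-k$ would force $\l-m\a^\vee$ into the open chamber $\CC_{s_\a}$, again excluded. Thus $k\le m\le 2c-k$, and the lemma follows.

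The main obstacle I anticipate is precisely this last step: one must be careful to distinguish an open Weyl chamber from its closure (the hypothesis only places $\l-m\a^\vee$ in some $\overline{\CC_x}$ with $x\ne1,s_\a$, not in its interior), and to match the combinatorially defined quantity $k$ of \cref{bd-m}--\cref{bd-k} with the real threshold $T$ at which the segment $\t\mapsto\l-\t\a^\vee$ exits $\overline{\CC^+}$. The convexity--symmetry mechanism itself is robust, but it relies on the $W$-invariance of the translation length, which should be recalled explicitly at the outset.
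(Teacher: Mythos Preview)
Your proof is correct and follows essentially the same route as the paper's: both arguments rest on the convexity of the function $t\mapsto\ell(t^{\l-t\a^\vee})$ along the segment from $\l$ to $s_\a\l$, together with the equality of its values at the two integer thresholds $k$ and $\<\a,\l\>-k$. Your presentation is in fact a mild streamlining of the paper's: by starting from the closed formula $\ell(t^\nu)=\sum_{\b\in\Phi^+}|\<\b,\nu\>|$ you obtain convexity and the full symmetry $g(2c-t)=g(t)$ in one stroke, whereas the paper defines $f$ as a piecewise-linear extension, cites \cite{LS10} for convexity, and then uses the depth hypothesis to pin down the decreasing/increasing shape of $f$ before invoking $f(k)=f(k')$. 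You also spell out more carefully why the $m$ of the hypothesis must lie in $[k,2c-k]$ (via the observation that for $t<T$ the point $\l-t\a^\vee$ is in the \emph{open} dominant chamber), a step the paper leaves largely implicit. The only place the depth hypothesis enters your argument is through \cref{bd-k} ensuring $k\ge1$, which is all that is actually needed; the paper's additional use of the depth bound to locate $\l-m\a^\vee$ in $\overline{\CC^+}$ for $m\in(0,\tfrac32)$ is redundant once one has convexity plus symmetry.
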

\begin{proof}
Following \cite{LS10}, define the function $f:\BR \rightarrow \BR_{\geq 0}$ by linearly extending the function $\wtd f: \BZ \rightarrow \BZ_{\geq 0}$ defined by 
\[\wtd f(m)=\ell(t^{\l-m\a^\vee}).\]
More precisely, $f$ is the function associated to the graph obtained by joining $\wtd f(m)$ and $\wtd f(m+1)$ by the line passing through them for every $m \in \BZ$. It is easy to see that $f$ is a convex function, cf. \cite{LS10}, proof of proposition 4; in fact, it is a piece-wise linear function, and is given by a single expression linear in $m$ as long as $\l-m\a^\vee$ is in the same chamber. Since $\l, \l-\a^\vee$ are both dominant due to the imposed depth hypothesis, we see that $f(1)=\<2\rho,\l-\a^\vee\>, f(\<\a,\l\>)=\<2\rho,\l\>$. 

We now show that $f$ is decreasing around $1$ and increasing around $\<\a,\l\>$. For example, if $m \in (0,\frac{3}{2})$ and $\a_i \in \D$ then by our earlier discussion about maximum value of $\<\a_i,\a^\vee\>$ we have
\begin{equation*}
\<\a_i,\l-m\a^\vee\>=\<\a_i,\l\>-m\<\a_i,\a^\vee\> \geq
\begin{cases}
\text{depth}(\l)-3, & \text{if $W \text{is not of type}~G_2$;}\\
\text{depth}(\l)-\frac{9}{2}, & \text{if $W \text{is of type}~G_2$.}
\end{cases}
\end{equation*}
Hence, our depth hypothesis ensures that $\<\a_i,\l-m\a^\vee\> \geq 0$. Therefore, $\l-m\a^\vee$ is in $\overline{\CC^+}$ whenever $m \in (0,\frac{3}{2})$, and thus 
\begin{equation}\label{interim1}
    f(m)=\<2\rho, \l-m\a^\vee\>=\<2\rho,\l\>-m\<2\rho,\a^\vee\>
\end{equation}
is clearly decreasing in this neighbourhood. Similarly, we note that if $m \in (\<\a,\l\>-\frac{1}{2},\<\a,\l\>+\frac{1}{2})$ and $\a_i \in \D$ we have
\begin{flalign*}
\<\a_i,s_\a(\l-m\a^\vee)\>=\<\a_i,\l\>-(\<\a,\l\>-m)\<\a_i,\a^\vee\> 
\geq  
\begin{cases}
\text{depth}(\l)-1, & \text{if $W \text{is not of type}~G_2$;}\\
\text{depth}(\l)-\frac{3}{2}, & \text{if $W \text{is of type}~G_2$.}
\end{cases}
\end{flalign*}
Again, our depth hypothesis ensures that $\<\a_i,s_\a(\l-m\a^\vee)\> > 0$. Hence $\l-m\a^\vee \in \CC_{s_\a}$ whenever $m \in (\<\a,\l\>-\frac{1}{2},\<\a,\l\>+\frac{1}{2})$, and thus
\begin{equation}\label{interim2}
    f(m)=\<2\rho,s_a(\l-m\a^\vee)\>=\<2\rho,\l\>-(\<\a,\l\>-m)\<2\rho,\a^\vee\>=\<2\rho,s_a(\l)\>+m\<2\rho,\a^\vee\>
\end{equation}
is clearly increasing in the neighbourhood.

Since the intervals $(0,\frac{3}{2})$ and $(\<\a,\l\>-\frac{1}{2},\<\a,\l\>+\frac{1}{2})$ are disjoint, by convexity of $f$ we infer the global shape of the graph of $f$. Namely, $f(m)$ steadily decreases and is defined by \cref{interim1} as long as $\l-m\a^\vee \in \overline{\CC^+}$; as $\l-m\a^\vee$ traverses through other chambers with value of $m$ increasing, it (weakly) decreases further until it reaches local minimum, then it starts increasing; and finally, once $\l-m\a^\vee$ enters $\overline{\CC_{s_\a}}$, $f(m)$ is defined by \cref{interim2} and steadily increases. Therefore, if we define $$k'=k'_{\l,\a}:=\text{min} \{m: \lambda-m\alpha^\vee \in \overline{\CC_{s_\a}},1 \leq m \leq \<\alpha,\lambda\>, m\in \BZ\},$$ we must have $f(m) \leq \text{max}\{(f(k),f(k')\}$ for all $m \in [k,k']\cap \BZ$. Since $\l-k\a^\vee=s_\a(\l-k'\a^\vee)$ by symmetry, we have $f(k)=f(k')=\ell(t^{\l-k\a^\vee})$. Hence we are done.
\end{proof}

\smallskip
Recall from \cref{bd-m} that $m \leq \<\a,\l\>$. We now define $z \in W$ to be such that $\lambda-m\alpha^\vee \in \overline{\CC_z}$. Note that if $\lambda-m\alpha^\vee$ is singular, $z$ is not uniquely specified by this condition. However, the precise choice of $z$ would be immaterial in what follows, and henceforth pick one such $z$ satisfying the above condition. Let us denote the left and right weak Bruhat order by $\prec_{\text{left}}$ and $\prec_{\text{right}}$ respectively.
\begin{lemma}\label{bd-z}
We have $z \prec_{\text{right}} s_\alpha$. As a consequence, $\ell(s_\alpha z)+\ell(z)=\ell(s_\alpha)$.
\end{lemma}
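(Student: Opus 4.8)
The plan is to translate both the conclusion $z\prec_{\text{right}}s_\a$ and its stated consequence into a single inclusion of inversion sets, and then verify that inclusion geometrically. Recall that for $x\in W$ one has $|\text{Inv}(x)|=\ell(x)$, and that for $\b\in\Phi^+$ the hyperplane $H_\b$ separates the open chamber $\CC^+$ from the open chamber $\CC_x=x\CC^+$ exactly when $\b\in\text{Inv}(x^{-1})$ -- equivalently, exactly when $\CC_x$ is contained in the open half-space $\{v:\<\b,v\><0\}$. With this dictionary the statement $z\prec_{\text{right}}s_\a$ is, by the standard description of right weak order via reduced words, nothing but the inclusion $\text{Inv}(z^{-1})\subseteq\text{Inv}(s_\a)$ (using $s_\a^{-1}=s_\a$); and granting that inclusion, the identity $\ell(s_\a z)+\ell(z)=\ell(s_\a)$ drops out of the second form of \cref{len2} applied with $x=s_\a,\ y=z$, since then $\text{Inv}(s_\a)^c\cap\text{Inv}(z^{-1})=\emptyset$. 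Hence everything reduces to proving $\text{Inv}(z^{-1})\subseteq\text{Inv}(s_\a)$.

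For that inclusion I would fix $\b\in\text{Inv}(z^{-1})$, so $\b\in\Phi^+$ and $\CC_z\subseteq\{v:\<\b,v\><0\}$. Since the coweight $\mu:=\l-m\a^\vee$ lies in $\overline{\CC_z}$ (this is how $z$ was selected), passing to the closure gives $\<\b,\mu\>\le0$, i.e. $\<\b,\l\>\le m\<\b,\a^\vee\>$. Now I feed in two facts: $\l$ is regular dominant (our depth hypothesis gives $\text{depth}(\l)\ge1$), so $\<\b,\l\>>0$; and $1\le m\le\<\a,\l\>$ by \cref{bd-m}. From $0<\<\b,\l\>\le m\<\b,\a^\vee\>$ with $m\ge1$ we get $\<\b,\a^\vee\>>0$, and then, multiplying $m\le\<\a,\l\>$ by $\<\b,\a^\vee\>>0$,
\[
\<s_\a\b,\l\>=\<\b,s_\a\l\>=\<\b,\l\>-\<\a,\l\>\<\b,\a^\vee\>\le\<\b,\l\>-m\<\b,\a^\vee\>\le0.
\]
The left-hand pairing cannot be zero, because $s_\a\b$ is a nonzero root and $\l$ is regular; so $\<s_\a\b,\l\><0$, and regularity and dominance of $\l$ then force $s_\a\b\in-\Phi^+$, i.e. $\b\in\text{Inv}(s_\a)$. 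This establishes $\text{Inv}(z^{-1})\subseteq\text{Inv}(s_\a)$, hence the lemma.

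I do not anticipate a genuine obstacle here: once the correspondence ``walls between $\CC^+$ and $\CC_z$ $\leftrightarrow$ $\text{Inv}(z^{-1})$'' is fixed, the proof is a two-line pairing computation fueled by \cref{bd-m} and the regularity of $\l$. The one point needing a little care is that $z$ is only well defined up to the choice of a chamber whose closure contains $\mu$ when $\mu$ is singular; but the argument is manifestly independent of that choice, since it uses only that $\<\b,\mu\>\le0$ for every $\b\in\text{Inv}(z^{-1})$, which holds for any chamber $\CC_z$ with $\mu\in\overline{\CC_z}$. A secondary, purely bookkeeping matter is confirming that the convention for $\prec_{\text{right}}$ in use matches $\text{Inv}(z^{-1})\subseteq\text{Inv}(s_\a)$ rather than $\text{Inv}(z)\subseteq\text{Inv}(s_\a)$, which is a routine check with reduced words.
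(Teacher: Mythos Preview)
Your proposal is correct and follows essentially the same approach as the paper: both reduce to proving the inclusion $\text{Inv}(z^{-1})\subseteq\text{Inv}(s_\a)$, and both establish it by taking $\b\in\text{Inv}(z^{-1})$, deducing $\<\b,\l-m\a^\vee\>\le 0$, and combining this with $m\le\<\a,\l\>$ and the regularity of $\l$ to force $s_\a\b\in-\Phi^+$. The only differences are cosmetic (your geometric half-space language versus the paper's $W$-invariance of the pairing, and your multiplication versus the paper's division by $\<\b,\a^\vee\>$); the length consequence is derived identically via \cref{len2}.
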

\begin{proof}

It suffices to show that $\text{Inv}(z^{-1}) \subset \text{Inv}(s_\alpha)$, because this is equivalent to $z^{-1} \prec_{\text{left}} s_\alpha$, which in turn is equivalent to the claim above.

We start by noting that for $\beta \in \text{Inv}(z^{-1})$, we have $\langle \beta, \lambda-m\alpha^{\vee} \rangle \leq 0$. This follows from rewriting $\langle \beta, \lambda-m\alpha^{\vee} \rangle = \langle z^{-1}\beta, z^{-1}(\lambda-m\alpha^{\vee})\rangle$ and noting that $z^{-1}(\lambda-m\alpha^\vee)$ is a dominant coweight and $z^{-1}\beta \in -\Phi^+$. Therefore we get $\langle \beta, \lambda \rangle \leq m\langle\beta,\alpha^\vee \rangle$. Since $\lambda$ dominant regular, $\langle \beta, \lambda \rangle>0$ and hence $\langle\beta,\alpha^\vee \rangle>0$ as well. Dividing out both side of the inequality by $\langle\beta,\alpha^\vee \rangle$, we get $m \geq \frac{\langle\beta,\lambda\rangle}{\langle\beta,\alpha^\vee\rangle}$. Combining this with \cref{bd-m}, we obtain 
    \[\langle\alpha,\lambda\rangle \geq \frac{\langle\beta,\lambda\rangle}{\langle\beta,\alpha^\vee\rangle}.\]
    
But then $\langle\beta,\alpha^\vee\rangle  \langle\alpha,\lambda\rangle \geq \langle\beta,\lambda\rangle$, whence $\<s_\alpha(\beta),\lambda\>=\<\beta-\<\beta,\alpha^\vee\>\alpha,\lambda\> \leq 0$. Since $\lambda$ is dominant regular, this gives $s_\alpha(\beta) \in -\Phi^+$, hence $\beta \in \text{Inv}(s_\alpha)$. Thus we have shown $\text{Inv}(z^{-1}) \subset \text{Inv}(s_\a)$.

\smallskip
Finally, let us note that 
\begin{equation*}
    \ell(s_\alpha z)+\ell(z)=\ell(s_\alpha)-\ell(z)+2|\text{Inv}(s_\alpha)^c\cap \text{Inv}(z^{-1})| + \ell(z)=\ell(s_\alpha)+2|\text{Inv}(s_\alpha)^c\cap \text{Inv}(z^{-1})|=\ell(s_\alpha).
\end{equation*}

This finishes the proof.
\end{proof}

\subsection{Two distinct possibilities}
In this subsection we further pin down the possible values of $z$. Recall that $z^{-1}(\lambda-m\alpha^\vee)$ is dominant; let us temporarily denote this coweight by $\mu$. Let $J \subset \BS$ be such that $W_J=\text{Stab}(\mu)$. Abbreviate $\zeta=z^{-1}y$. By standard fact about Coxeter groups, $\zeta$ has an unique factorization as $\zeta=\zeta_J \zeta^J$ with $\zeta_J\in W_J$ and $\zeta^J \in~^J W$. Note that 
\begin{equation}\label{interim3}
    w'=s_\alpha t^{\lambda-m\alpha^\vee}y=s_\alpha zt^{\mu}z^{-1}y=s_\alpha zt^{\mu}\zeta_J \zeta^J=s_\alpha z\zeta_J t^{\mu} \zeta^J.
\end{equation}

\begin{lemma}
The alcove $t^{\mu} \zeta^J\ba$ is in the dominant chamber.
\end{lemma}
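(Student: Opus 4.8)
The plan is to verify the defining inequalities of the dominant chamber directly on the alcove $t^{\mu}\zeta^J\ba$. Recall from the setup that $\mu = z\i(\lambda - m\alpha^\vee)$ is a \emph{dominant} coweight lying in $X_*(T)$ (by the defining property of $z$), that $W_J = \text{Stab}(\mu)$, and that $\zeta^J \in {}^J W$, i.e.\ $\zeta^J$ is the minimal-length element of the coset $W_J\zeta$. Since $t^{\mu}\zeta^J$ acts on the apartment by $p \mapsto \mu + \zeta^J(p)$, a general point of the alcove is $v = \mu + \zeta^J(a)$ with $a \in \ba$, and for a simple root $\alpha_i \in \D$ we have
\[
\langle \alpha_i, v\rangle \;=\; \langle \alpha_i,\mu\rangle \;+\; \langle (\zeta^J)\i\alpha_i,\, a\rangle .
\]
I would first record two elementary inputs: (i) $\langle\alpha_i,\mu\rangle$ is a non-negative integer, since $\mu$ is dominant and integral; and (ii) $|\langle\beta,a\rangle| < 1$ for every root $\beta$ and every $a$ in the base alcove, which follows from the description of $\ba$ by the inequalities $\langle\alpha_i,\cdot\rangle > 0$ ($\alpha_i \in \D$) together with $\langle\theta,\cdot\rangle < 1$ for the highest root $\theta$.

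The argument then proceeds by distinguishing two cases according to the value of $\langle\alpha_i,\mu\rangle$. If $\langle\alpha_i,\mu\rangle \ge 1$, then applying (ii) to the root $(\zeta^J)\i\alpha_i$ gives $\langle\alpha_i,v\rangle > 1 - 1 = 0$. If instead $\langle\alpha_i,\mu\rangle = 0$, then $s_{\alpha_i}$ fixes $\mu$, hence $s_{\alpha_i} \in W_J = \text{Stab}(\mu)$, so $\alpha_i$ is one of the simple roots defining $W_J$; since $\zeta^J \in {}^J W$, the element $(\zeta^J)\i$ sends every positive root of $W_J$ into $\Phi^+$, so $(\zeta^J)\i\alpha_i \in \Phi^+$ and therefore $\langle\alpha_i,v\rangle = \langle (\zeta^J)\i\alpha_i, a\rangle > 0$. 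In both cases $\langle\alpha_i,v\rangle > 0$ for every $\alpha_i \in \D$, which is exactly the assertion that $t^{\mu}\zeta^J\ba$ lies in the (open) dominant chamber.

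I do not expect any real obstacle here: the lemma is essentially formal once the right inputs are isolated. The only place calling for a bit of care is the case $\langle\alpha_i,\mu\rangle = 0$, where one must combine the fact that $W_J$ is \emph{precisely} the stabilizer of $\mu$ (to locate $\alpha_i$ among the simple roots of $W_J$) with the minimality property defining $\zeta^J \in {}^J W$ (to conclude that $(\zeta^J)\i\alpha_i$ is a positive root). It is worth noting that, although the depth hypothesis on $\lambda$ from \cref{cover} remains in force, it plays no role in this particular lemma: the conclusion holds for any dominant coweight $\mu$ and any $\zeta^J \in {}^J W$.
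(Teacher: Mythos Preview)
Your proof is correct and follows essentially the same approach as the paper: both split into the cases $\alpha_i\in J$ and $\alpha_i\notin J$, using in the first case that $(\zeta^J)^{-1}\alpha_i\in\Phi^+$ from $\zeta^J\in{}^JW$, and in the second that $\langle\alpha_i,\mu\rangle\ge 1$ dominates the root pairing with an alcove point. The only cosmetic difference is that the paper tests a single interior point (the centroid $\bar\omega=\frac{1}{n}\sum_i\frac{\omega_i^\vee}{n_i}$ of $\ba$) and uses $\langle\theta,\bar\omega\rangle=1$, whereas you check an arbitrary $a\in\ba$ via the uniform bound $|\langle\beta,a\rangle|<1$; your version is marginally more direct since it avoids the remark that one interior point determines the chamber of an alcove.
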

\begin{proof}
Define $\bar{\omega}:=\frac{1}{n}\sum_{i=1}^n \frac{\omega_i^{\vee}}{n_i}$, where $n_i$ is the coefficient of $\alpha_i$ in the highest root $\theta$. Since $\mu+\zeta^J \bar{\omega}$ is centroid of the alcove $t^\mu \zeta^J \mathbf{a}$, we can make the following observation: \textit{$t^{\mu}\zeta^J\ba$ is in the dominant chamber if and only if $\mu+\zeta^J \bar{\omega}$ is dominant.}

\smallskip
We now show that the latter statement holds true. Note that if $\alpha_i \in J$, then $\langle \alpha_i, \mu+y^J \bar{\omega} \rangle=\langle (y^J)^{-1}\alpha_i,\bar{\omega}\rangle >0$, since by definition $s_i y^J > y^J \implies (y^J)^{-1} s_i > (y^J)^{-1}
\implies (y^J)^{-1}\alpha_i \in \Phi^+$. Now let $\alpha_i \in \Delta \setminus J$, then $\langle \alpha_i,\mu \rangle \geq 1$; since $(y^J)^{-1}\alpha_i \geq -\theta$, we have $\langle \alpha_i, \mu+y^J \bar{\omega} \rangle \geq 1 + \langle -\theta,\bar{\omega} \rangle \geq 1-1 = 0$. This proves the claim.
\end{proof}

Applying the previous lemma, we get from \cref{interim3}
\begin{equation*}
    \ell(w')=\ell(s_\alpha z\zeta_J)+\langle 2\rho, \mu \rangle -\ell(\zeta^J).
\end{equation*}

Therefore the cocover condition gives
\begin{align*}
& \langle 2\rho, \lambda \rangle -\ell(y) -1=\ell(s_\alpha z\zeta_J)+\langle 2\rho, \mu \rangle -\ell(\zeta^J).& 
\end{align*}

In other words, we get
\begin{equation}\label{interim4}
    \ell(t^\lambda)-\ell(t^{\mu})= 1+\ell(s_\alpha z \zeta_J)+\ell(y)-\ell(\zeta^J).
\end{equation}

Write $y=z \cdot z^{-1}y=z\zeta_J \zeta^J$. Note that \cref{len2} then gives
\begin{equation*}
    \ell(y)=\ell(z \cdot \z_J \z^J)=\ell(z)+\ell(\zeta_J)+\ell(\zeta^J)-2|\text{Inv}(z)\cap \text{Inv}((\zeta^J)^{-1}\zeta_J^{-1})|.
\end{equation*}

Similarly,
\begin{equation*}
    \ell(s_az\z_J)=\ell(s_\alpha z)-\ell(\zeta_J)+2|\text{Inv}(s_\alpha z)^c \cap \text{Inv}(\zeta_J^{-1})|
\end{equation*}

Hence we can rewrite the right hand side of \cref{interim4} as 
\begin{align*}
& 1+\ell(s_\alpha z)+\ell(z)+2\{|\text{Inv}(s_\alpha z)^c \cap \text{Inv}(\zeta_J^{-1})|-|\text{Inv}(z)\cap \text{Inv}((\zeta^J)^{-1}\zeta_J^{-1})|\}.&
\end{align*}

Since $\zeta_J^{-1} \prec_{\text{left}} (\zeta^J)^{-1} \zeta_J^{-1}$, we have that $\text{Inv}(\zeta_J^{-1}) \subset \text{Inv}((\zeta^J)^{-1} \zeta^{-1})$. Combining this with the fact that $|A|-|B| \leq |A\setminus B|$ for two sets $A, B$, we therefore conclude that
\begin{equation}\label{interim5}
   \ell(t^\lambda)-\ell(t^{\mu})  \leq 1 +\ell(s_\alpha z)+\ell(z)+2|\text{Inv}(s_\alpha z)^c \cap \text{Inv}(z)^c \cap \text{Inv}(\zeta_J^{-1})|.
\end{equation}
 
\begin{lemma}\label{bd-z-copy}
We have $\text{Inv}(s_\alpha z)^c \cap \text{Inv}(z)^c \cap \text{Inv}(\zeta_J^{-1}) = \emptyset$.
\end{lemma}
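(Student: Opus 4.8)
The plan is to argue by contradiction. Suppose $\beta$ belongs to $\text{Inv}(s_\alpha z)^c \cap \text{Inv}(z)^c \cap \text{Inv}(\zeta_J^{-1})$; in particular $\beta \in \Phi^+$. The first step is to pin $\beta$ inside the parabolic subsystem $\Phi_J$. Since $\zeta_J \in W_J$ and $W_J$ preserves $\Phi^+\setminus\Phi_J^+$, the condition $\beta \in \text{Inv}(\zeta_J^{-1})$, i.e. $\zeta_J^{-1}\beta \in -\Phi^+$, forces $\beta \in \Phi_J^+$. Because $W_J = \text{Stab}(\mu)$ with $\mu$ dominant, a positive root $\gamma = \sum_i c_i\alpha_i$ lies in $\Phi_J$ precisely when $\langle\gamma,\mu\rangle = 0$ (each summand $c_i\langle\alpha_i,\mu\rangle$ is $\geq 0$, so all of them vanish). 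Hence $\langle\beta,\mu\rangle = 0$, and unwinding $\mu = z^{-1}(\lambda - m\alpha^\vee)$ yields $\langle z\beta,\lambda - m\alpha^\vee\rangle = 0$.

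Next I would set $\gamma := z\beta$. Since $\beta \notin \text{Inv}(z)$ we have $\gamma \in \Phi^+$, and the identity just obtained reads $\langle\gamma,\lambda\rangle = m\langle\gamma,\alpha^\vee\rangle$. As $\lambda$ is dominant regular, $\langle\gamma,\lambda\rangle > 0$; combined with $m \geq 1$ from \cref{bd-m} this forces $\langle\gamma,\alpha^\vee\rangle > 0$. Now compute $\langle s_\alpha\gamma,\lambda\rangle = \langle\gamma,\lambda\rangle - \langle\gamma,\alpha^\vee\rangle\langle\alpha,\lambda\rangle = \langle\gamma,\alpha^\vee\rangle\bigl(m - \langle\alpha,\lambda\rangle\bigr)$. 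Since $m \leq \langle\alpha,\lambda\rangle$, again by \cref{bd-m}, and $\langle\gamma,\alpha^\vee\rangle > 0$, we obtain $\langle s_\alpha\gamma,\lambda\rangle \leq 0$. On the other hand, $\beta \notin \text{Inv}(s_\alpha z)$ says precisely that $s_\alpha z\beta = s_\alpha\gamma \in \Phi^+$, and dominant regularity of $\lambda$ then gives $\langle s_\alpha\gamma,\lambda\rangle > 0$ — a contradiction. So the triple intersection is empty.

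The argument is short, and I do not expect a genuine obstacle. The only external inputs are the bound $1 \leq m \leq \langle\alpha,\lambda\rangle$ of \cref{bd-m} and two standard facts about parabolic subgroups used in the first step: that $W_J$ stabilizes $\Phi^+\setminus\Phi_J^+$, and that $\Phi_J^+ = \{\gamma\in\Phi^+ : \langle\gamma,\mu\rangle = 0\}$ for dominant $\mu$ with $\text{Stab}(\mu) = W_J$. The only place needing a little care is tracking the two sign conditions on $m$: $m \geq 1$ is what makes $\langle\gamma,\alpha^\vee\rangle$ positive, and $m \leq \langle\alpha,\lambda\rangle$ is what makes $\langle s_\alpha\gamma,\lambda\rangle$ non-positive. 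Notably, the depth hypothesis of \cref{cover} is not used here beyond the regularity of $\lambda$.
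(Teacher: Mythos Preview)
Your proof is correct and follows essentially the same route as the paper's: both reduce to $\langle\beta,\mu\rangle=0$ via $\text{Inv}(\zeta_J^{-1})\subset\Phi_J^+$, rewrite this as $\langle z\beta,\lambda\rangle=m\langle z\beta,\alpha^\vee\rangle$, and then use $m\leq\langle\alpha,\lambda\rangle$ together with $\beta\notin\text{Inv}(s_\alpha z)$ to derive the contradiction $\langle s_\alpha(z\beta),\lambda\rangle\leq 0$ for a positive root $s_\alpha(z\beta)$. Your write-up is slightly more explicit about the parabolic facts and the role of $m\geq 1$, but the argument is the same.
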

\begin{proof}
The argument here is similar to the proof of \cref{bd-z}. Suppose $\beta \in \text{Inv}(s_\alpha z)^c \cap \text{Inv}(z)^c \cap \text{Inv}(\zeta_J^{-1}) \subset \text{Inv}(s_\alpha z)^c \cap \text{Inv}(z)^c \cap \Phi_J^{+}$. Thus $\langle \beta, z^{-1}(\lambda-m\alpha^\vee)\rangle=0$, hence $\langle z\beta, \lambda \rangle=m\langle z\beta, \alpha^\vee \rangle$; since $z\beta$ is a positive root, both sides are positive and 
\begin{equation*}
    m=\frac{\langle z\beta, \lambda \rangle}{\langle z\beta, \alpha^\vee \rangle} \leq \langle \alpha,\lambda\rangle.
\end{equation*}
Therefore, $\<z\b, \l\>-\<z\b,\a^\vee\>\<\a,\l\> = \<z\b,s_\a(\l)\>\leq 0$. Since $\b \in \text{Inv}(s_\alpha z)^c$, we have $s_\a (z\b) \in \Phi^+$. Hence we have $\<z\b,s_\a(\l)\>=\<s_\a(z\b),\l\> \leq 0$, but that is a contradiction since $\l$ is dominant regular. This shows that the purported set must be empty and we are done.
\end{proof}
Therefore, combining \cref{interim5} with \cref{bd-z} and \cref{bd-z-copy} gives
\begin{equation}\label{interim6}
     \ell(t^\lambda)-\ell(t^{\mu})  \leq 1 +\ell(s_\alpha)
\end{equation}

Now, suppose that $z \neq 1, s_\alpha$. By \cref{bd-ell} this gives 
\begin{equation*}
    \ell(t^\lambda)-\ell(t^{\mu}) \geq \ell(t^\lambda)-\ell(t^{\lambda-k\alpha^\vee})=k\<2\rho,\alpha^\vee\>.
\end{equation*}

Combining this with \cref{interim6}, we get
\begin{equation*}
     k \< 2\rho, \a^{\vee} \> \leq 1+\ell(s_\a) \leq  \< 2\rho, \a^{\vee} \>
\end{equation*}

This gives $k=1$. For the non-simply laced types, this is a contradiction with previously established lower bound in \cref{bd-k}. Therefore in such cases, we get that $z \in \{1,s_\a\}$.

\smallskip
Before going forward, we make the following observation about a simply laced root system: 
\[\text{if $\<\beta,\alpha^\vee\>=2$ for a fixed coroot $\alpha$, then $\beta=\alpha$.} \]

This can be checked directly in type $A_n$ and $D_n$, where the positive roots are given by $\{e_i-e_j:1\leq i<j \leq n\}$ and $\{e_i\pm e_j:1\leq i<j \leq n\} \cup \{e_i+e_n:1\leq i <n\}$ respectively. Since root systems of type $E_6,E_7$ arise as subsystem of type $E_8$, we just give an argument for root system of type $E_8$ to conclude. Note that for root system of type $E_8$, the positive roots are of two kinds - given by $\{\pm e_i+e_j :1\leq i < j \leq 8\}$, and $\{\frac{1}{2}(e_8+\sum_{i=1}^7(-1)^{\nu(i)}e_i)$: $\sum_{i=1}^7\nu(i) \in 2\BZ\}$. We can easily see that pairing between a root $\gamma$ from the first set with a coroot $\alpha^\vee$ corresponding to elements of either sets cannot be $2$ unless $\gamma=\alpha$. The remaining possibility is $\<\frac{1}{2}(e_8+\sum_{i=1}^7(-1)^{\nu_1(i)}e_i),\frac{1}{2}(e_8+\sum_{i=1}^7(-1)^{\nu_2(i)}e_i)\>=2$, but that gives $\<e_8+\sum_{i=1}^7(-1)^{\nu_1(i)}e_i,e_8+\sum_{i=1}^7(-1)^{\nu_2(i)}e_i\>=8$ - therefore forcing $\nu_1=\nu_2$.

\smallskip
We now resume the discussion about estimating $k$ and restrict ourselves to the simply laced types. Recall that $k=1$. This means that $\lambda-\alpha^\vee \in \overline{\CC^+}$, but $\lambda-2\alpha^\vee \notin \overline{\CC^+}$; hence there is a $\beta \in \Phi^+$ such that $\<\beta,\lambda-2\alpha^\vee\>\leq 0$. By the depth condition, this yields $3 \leq \<\beta,\lambda\> \leq 2\<\beta,\alpha^\vee\>$. Therefore $\<\beta,\alpha^\vee\>=2$, and $\<\beta,\lambda\>$ is equal to either $3$ or $4$. But this gives $\beta=\alpha$, and hence $\<\alpha,\lambda\>$ is either $3$ or $4$. If $\<\alpha,\lambda\>=3$, the relevant coweights are $\lambda-\alpha^\vee, \lambda-2\alpha^\vee=s_\alpha(\lambda-\alpha^\vee), \lambda-3\alpha^\vee=s_\alpha(\lambda)$; the first one is in $\CC^+$ by the depth condition, and the last two are in $\CC_{s_\alpha}$. If $\<\alpha,\lambda\>=4$, then the relevant coweights are $\lambda-\alpha^\vee, \lambda-2\alpha^\vee, \lambda-3\alpha^\vee=s_\alpha(\lambda-\alpha^\vee), \lambda-4\alpha^\vee=s_\alpha(\lambda)$; the first one is in $\CC^+$, the last two are in $\CC_{s_\alpha}$ and $\lambda-2\alpha^\vee$ lies on the wall $H_{\alpha}$, so it lies in both $\overline{\CC^+}$ and $\overline{\CC_{s_\alpha}}$.\\

We summarize the content of this subsection as follows.
\begin{proposition}
If $s_\alpha t^{\lambda-m\alpha^\vee}y$ is a cocover of $t^{\lambda}y$, then $\lambda-m\alpha^{\vee}$ is either in $\overline{\CC^+}$ or $\overline{\CC_{s_\alpha}}$.
\end{proposition}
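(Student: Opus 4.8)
The plan is to assemble the estimates of this subsection into a single dichotomy on the chamber containing $\lambda-m\alpha^\vee$. Write $w=t^\lambda y$ and $w'=s_\alpha t^{\lambda-m\alpha^\vee}y$, pick $z\in W$ with $\lambda-m\alpha^\vee\in\overline{\CC_z}$, set $\mu=z^{-1}(\lambda-m\alpha^\vee)$ (a dominant coweight), let $W_J=\text{Stab}(\mu)$, and factor $\zeta:=z^{-1}y=\zeta_J\zeta^J$ with $\zeta_J\in W_J$ and $\zeta^J\in{}^{J}W$. It suffices to show that $z$ may be taken in $\{1,s_\alpha\}$, except for one family of cases in the simply laced types that is settled by direct inspection; since the estimates below never use the precise choice of $z$ when $\lambda-m\alpha^\vee$ is singular, this yields the claim.

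First I would compute $\ell(w')$. Using the lemma above that $t^\mu\zeta^J\ba$ lies in the dominant chamber --- itself proved from the centroid criterion together with $\langle-\theta,\bar\omega\rangle\ge-1$ --- and \cref{len1}, one gets $\ell(w')=\ell(s_\alpha z\zeta_J)+\langle2\rho,\mu\rangle-\ell(\zeta^J)$. Substituting into the cocover identity $\ell(w)=\ell(w')+1$ and expanding $\ell(y)=\ell(z\zeta_J\zeta^J)$ and $\ell(s_\alpha z\zeta_J)$ via \cref{len2}, the inversion-set bookkeeping reduces, on using the weak-order containment $\text{Inv}(\zeta_J^{-1})\subseteq\text{Inv}((\zeta^J)^{-1}\zeta_J^{-1})$, then \cref{bd-z} (so that $\ell(s_\alpha z)+\ell(z)=\ell(s_\alpha)$), and then \cref{bd-z-copy} (so that the triple intersection of complements is empty), to
\[
\ell(t^\lambda)-\ell(t^\mu)\ \le\ 1+\ell(s_\alpha)\ \le\ \langle2\rho,\alpha^\vee\rangle.
\]

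Next comes the dichotomy. If $z\in\{1,s_\alpha\}$ there is nothing left to prove. Otherwise $\lambda-m\alpha^\vee$ lies in the closure of a chamber different from $\CC^+$ and $\CC_{s_\alpha}$, so \cref{bd-ell} gives $\ell(t^\lambda)-\ell(t^\mu)\ge\ell(t^\lambda)-\ell(t^{\lambda-k\alpha^\vee})=k\langle2\rho,\alpha^\vee\rangle$ with $k=k_{\lambda,\alpha}$, and comparing with the display forces $k=1$. In the non-simply laced types this contradicts \cref{bd-k}, so $z\in\{1,s_\alpha\}$ there. In the simply laced types, $k=1$ says $\lambda-\alpha^\vee\in\overline{\CC^+}$ while $\lambda-2\alpha^\vee\notin\overline{\CC^+}$; choosing $\beta\in\Phi^+$ with $\langle\beta,\lambda-2\alpha^\vee\rangle\le0$, the depth hypothesis yields $3\le\langle\beta,\lambda\rangle\le2\langle\beta,\alpha^\vee\rangle$, whence $\langle\beta,\alpha^\vee\rangle=2$, and the elementary observation that $\langle\beta,\alpha^\vee\rangle=2$ forces $\beta=\alpha$ in a simply laced root system gives $\langle\alpha,\lambda\rangle\in\{3,4\}$. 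Finally I would list the coweights $\lambda-m\alpha^\vee$ for $1\le m\le\langle\alpha,\lambda\rangle$ in each of these two cases and check that each lies in $\overline{\CC^+}$ or $\overline{\CC_{s_\alpha}}$, completing the argument.

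I expect the main obstacle to be the length bookkeeping in the middle step: one must faithfully track inversion sets through the factorizations $y=z\zeta_J\zeta^J$ and $s_\alpha z\zeta_J$, and --- this is the crux --- establish that $t^\mu\zeta^J\ba$ actually lands in the dominant chamber, so that the clean formula \cref{len1} applies to $\ell(w')$. A secondary subtlety is the non-uniqueness of $z$ when $\lambda-m\alpha^\vee$ is singular; one should check that every estimate used is insensitive to this choice, which is precisely why the conclusion is phrased in terms of the coweight rather than of $z$.
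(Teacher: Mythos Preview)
Your proposal is correct and follows essentially the same approach as the paper: you derive the key inequality $\ell(t^\lambda)-\ell(t^\mu)\le 1+\ell(s_\alpha)$ via the dominant-chamber lemma for $t^\mu\zeta^J\ba$ together with \cref{bd-z} and \cref{bd-z-copy}, then combine with \cref{bd-ell} to force $k=1$, invoke \cref{bd-k} for the non-simply laced types, and in the simply laced case use the observation that $\langle\beta,\alpha^\vee\rangle=2$ forces $\beta=\alpha$ to reduce to $\langle\alpha,\lambda\rangle\in\{3,4\}$ and finish by direct inspection. The organization, lemma citations, and handling of the singular-$z$ ambiguity all match the paper's argument.
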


\subsection{Finishing the proof}
\begin{proof}[Proof of Theorem 6.1] We deal with the two cases found above.
\begin{enumerate}
    \item When $\lambda-m\alpha^{\vee}$ is in $\overline{\CC^+}$: in this case, \cref{interim6} gives us 
    \begin{align*}
        \<2\rho,\l\> - \<2\rho,\l-m\a^\vee\> \leq 1 + \ell(s_\a) \leq \<2\rho,\a^\vee\>.
    \end{align*}
    Therefore we get $m \leq 1$; hence $m=1$ and $1+\ell(s_\a)=\<2\rho,\a^\vee\>$. This corresponds to the first case in \cref{cover}.
    \item When $\lambda-m\alpha^{\vee}$ is in $\overline{\CC_{s_\alpha}}$: in this case, \cref{interim6} gives us
    \begin{equation}\label{interim7}
         \<2\rho,\l\> - \<2\rho,s_\a(\l-m\a^\vee)\> \leq 1 + \ell(s_\a).
    \end{equation}
     Since $\ell(s_a)\leq \<2\rho,\a^\vee\>-1$, we thus get
     \begin{equation*}
         (\<\a,\l\>-m)\<2\rho, \a^\vee\> \leq  \<2\rho,\a^\vee\>.
     \end{equation*}
    Therefore we get $\<\a,\l\>-m \leq 1$. Since $m \le \<\a,\l\>$, this means that either $m=\<\a,\l\>-1$ or $m=\<\a,\l\>$. 
    
    Suppose first that it is the former case. Then substituting this value of $m$ in \cref{interim7} yields $\ell(s_\a)=\<2\rho,\a^\vee\>-1$. Note that $\l-m\a^\vee=s_\a(\l-\a^\vee)$ is regular (since $\l-\a^\vee$ is regular dominant due to the depth hypothesis) and thus $J=\emptyset$; substituting $\mu=s_\a(\l-\a^\vee), z=s_\a, \z_J=1, \z^J=s_\a y$ in \cref{interim4} therefore produces $\<2\rho,\a^\vee\>=1+\ell(y)-\ell(s_\a y)$, whence we get $\ell(s_\a y)=\<2\rho,\a^\vee\>-1-\ell(y)=\ell(s_\a)-\ell(y)$. This corresponds to the third case in \cref{cover}.
    
    Now, if it is the latter case we can carry out a similar substitution in \cref{interim4} to get $0=1+\ell(y)-\ell(s_\a y)$; \cref{interim7} does not yield any information in this case. This gives $\ell(s_a y)=\ell(y)+1$ and hence it corresponds to the second case in \cref{cover}. This completes the proof.
\end{enumerate}
\end{proof}
\begin{remark}
We know a posteriori that only the first and the last two choices from the string of coweights in \cref{list} are viable - so we must necessarily put a depth condition on $\lambda$ to ensure that $\lambda-\alpha^{\vee}$ is dominant; in that case, the first coweight is in the dominant chamber and the last two are in $\CC_{s_\a}$. In other words, the least stringent hypothesis on $\lambda$ under which one can expect to prove a result like this would be that $\lambda-\alpha^{\vee}$ is dominant. The depth condition that we impose above for the simply laced types is almost as weak an assumption as this, in the sense that we require $\lambda-\alpha^{\vee}$ to be \emph{dominant regular}. Calculations in small rank seem to provide evidence for our this speculation. In other words, we suspect that the depth hypothesis can be lowered to $2$ uniformly in all the cases.
\end{remark}
\subsection{Admissible subsets of $\wtd W$}
A useful description of the admissible set is given in \cite{HeYu20} in terms of weight of minimal length paths in the quantum Bruhat graph. This relies on the characterization of covering relation in $\wtd W$ as established in \cite[Proposition 4.2]{Milicevic21} and hence as such it necessarily brings into picture the superregularity condition, cf. \cite[Proposition 3.3]{HeYu20}. Since we can strengthen the result about covering relation, we automatically get the following improvement in the description of the admissible set.
\begin{proposition}
Suppose that $W$ is an irreducible Weyl group. Assume that 
\begin{equation*}
depth(\mu) \geq 
\begin{cases}
3, & \text{if $W \text{is of simply laced type}$;}\\
4, & \text{if $W \text{is of non-simply laced type but not of type}~G_2$.}\\
6, & \text{if $W \text{is of type}~G_2$.}
\end{cases}
\end{equation*} 
Let $\lambda$ be dominant and assume that
\begin{equation*}
 \langle \rho, \mu - \lambda \rangle < 
\begin{cases}
\lceil 
\frac{\text{depth}(\mu)-3}{2} \rceil, & \text{if $W$ is of simply laced type;}\\
\lceil 
\frac{\text{depth}(\mu)-4}{2} \rceil, & \text{if $W$ is of non-simply laced type but not of type $G_2$;}\\
\lceil \frac{\text{depth}(\mu)-6}{2} \rceil, & \text{if $W$ is of type $G_2$.}
\end{cases}
\end{equation*} 
Then $xt^\lambda y \in \Adm(\mu)$ if and only if $\text{weight}(x, y^{-1}) \leq \mu -\lambda$.
\end{proposition}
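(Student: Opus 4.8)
The plan is to obtain this from \cite[Proposition~3.3]{HeYu20}, which is exactly the asserted equivalence under the stronger hypothesis that $\mu$ is superregular. In \emph{loc. cit.} this is proved by an induction that connects an arbitrary element $w=xt^\lambda y$ to the maximal elements $t^{z\mu}$, $z\in W$, of $\Adm(\mu)$ through chains of covering relations, analysing at each step how $\text{weight}(x,y^{-1})$ — equivalently, by \cref{wt}, the weight of a reduced quantum reflection decomposition of $x^{-1}\lhd y^{-1}$ — changes, and showing that the inequality $\text{weight}(x,y^{-1})\le \mu-\lambda$ is preserved along such chains. The only place that argument uses superregularity is its appeal to \cite[Proposition~4.2]{Milicevic21}, the characterization of the covering relation $ut^\lambda v\gtrdot w'$ in $\wtd W$. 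Since \Cref{cover}, together with the reduction to arbitrary $ut^\lambda v$ recorded in the remark following it (Theorem~B), furnishes the very same characterization under only the hypothesis $\text{depth}(\lambda)\ge c$, with $c=3,4,6$ according as $W$ is simply laced, non-simply laced of type $\ne G_2$, or of type $G_2$, the plan is to rerun He and Yu's argument verbatim with \Cref{cover} replacing \cite[Proposition~4.2]{Milicevic21}; the work then reduces to checking that the condition $\text{depth}(\cdot)\ge c$ holds at every element of $\wtd W$ occurring in such a chain.

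This is exactly what the hypothesis on $\langle\rho,\mu-\lambda\rangle$ secures. One checks by induction along the chain — starting from $t^{z\mu}=zt^\mu z^{-1}$, whose dominant translation part is $\mu$ — that every element has the form $x't^{\lambda'}y'$ with $\lambda'$ dominant and $\mu\ge\lambda'\ge\lambda$: in Theorem~B the covers of types $(2)$ and $(4)$ replace the translation part $\lambda'$ by $\lambda'-\alpha^\vee$ for a positive root $\alpha$ (still dominant, indeed regular, since $\text{depth}(\lambda')\ge 3$), while those of types $(1)$ and $(3)$ leave it unchanged. Now for every simple root $\alpha_i$ and every positive coroot $\beta^\vee=\sum_k c_k\alpha_k^\vee$ one has $\langle\alpha_i,\beta^\vee\rangle=2c_i+\sum_{k\ne i}c_k\langle\alpha_i,\alpha_k^\vee\rangle\le 2c_i\le 2\sum_k c_k=2\langle\rho,\beta^\vee\rangle$, because the off-diagonal Cartan integers are nonpositive; writing $\mu-\lambda'$ as a sum of positive coroots and summing gives $\langle\alpha_i,\mu-\lambda'\rangle\le 2\langle\rho,\mu-\lambda'\rangle\le 2\langle\rho,\mu-\lambda\rangle$, whence
\[\text{depth}(\lambda')\ \ge\ \text{depth}(\mu)-2\langle\rho,\mu-\lambda\rangle.\]
Combined with $\langle\rho,\mu-\lambda\rangle<\lceil(\text{depth}(\mu)-c)/2\rceil$ this forces $\text{depth}(\lambda')\ge c+1>c$, so \Cref{cover} (and Theorem~B) is applicable to $x't^{\lambda'}y'$; in particular every translation part occurring is dominant regular, so each such $w$ is written uniquely as $x't^{\lambda'}y'$ and the quantity $\text{weight}(x',(y')^{-1})$ is unambiguous throughout.

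Granting the depth bookkeeping, He and Yu's induction proceeds with no change. For the ``only if'' direction one inducts on the covering-distance from $w$ to a maximal element of $\Adm(\mu)$: the base case $w=t^{z\mu}$ (where $\lambda=\mu$ and $\text{weight}(x,y^{-1})=0$) is a direct computation, and the inductive step takes a cover $w^+\gtrdot w$ lying in $\Adm(\mu)$ and runs through the three cases of \Cref{cover} to check that the weight inequality for $w^+$ implies that for $w$. For the ``if'' direction one inducts on $\langle\rho,\mu-\lambda\rangle$: when $\lambda=\mu$ the hypothesis $\text{weight}(x,y^{-1})\le 0$ forces $x\le y^{-1}$, and $xt^\mu y\in\Adm(\mu)$ follows directly; when $\lambda<\mu$ one uses \Cref{cover} to produce a cover $w^+\gtrdot w$ with strictly larger dominant translation part that still satisfies the weight inequality, so that $w\le w^+\in\Adm(\mu)$ by the inductive hypothesis. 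I expect the main obstacle to be precisely the verification underlying this last paragraph — namely that the weight-monotonicity analysis of \cite{HeYu20}, which rests on \cref{wt-x-y}, \cref{qroot} and the interplay between the edges of $\G_\Phi$ and the combinatorial cases, invokes the covering-relation characterization purely as a black box, so that substituting \Cref{cover} for \cite[Proposition~4.2]{Milicevic21} is legitimate; once this is granted, the argument above completes the proof.
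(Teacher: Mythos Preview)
Your proposal is correct and matches the paper's approach exactly: the paper's own proof is a single sentence referring to \cite[\S3.3 and Proposition~3.3]{HeYu20} and noting that \Cref{cover} replaces \cite[Proposition~4.2]{Milicevic21}, with the stated conditions on $\mu$ and $\lambda$ being ``a direct reflection of that.'' Your explicit depth bookkeeping spells out what the paper leaves implicit; the one point worth tightening is that your inductive claim $\lambda'\ge\lambda$ along a chain is justified via the cover classification, which itself requires $\text{depth}(\lambda')\ge c$ --- to avoid this mild circularity, invoke instead the general fact that $w_1\ge w_2$ in $\wtd W$ forces dominance of their translation parts (e.g.\ via \cref{Perm} applied at a vertex of $\ba$), which gives $\mu\ge\lambda'\ge\lambda$ directly from $t^{z\mu}\ge w'\ge w$.
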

The proof is of this proposition is identical to the argument made in \cite{HeYu20}, cf. section 3.3 and proposition 3.3 in loc. sit. and hence we do not repeat it here. The additional ingredient is \cref{cover}, and the conditions on $\mu$ and $\l$ are a direct reflection of that.

\section{Dimension formula for $X(\mu,b)$}\label{sec:dim}
In this section, we show that the dimension formula provided in \cite[Theorem 6.1]{HeYu20} holds in the absence of the superregularity condition imposed there. Our argument closely follows the proof scheme established in \cite{HeYu20}. Therefore, we replace most proofs with references to the corresponding arguments made in loc. sit. and only point out how to bypass certain steps that will enable us to drop the superregularity condition.
\begin{proposition}\label{virdim}
Suppose that $\mu$ is regular. Then 
\begin{equation}\label{d_adm(mu)}
    d_{\text{Adm}(\mu)}(b)=\<\rho,\mu-\nu([b])\>-\frac{1}{2}\mathrm{def}_{\bG}(b) +\frac{1}{2}\ell(w_0)-\frac{1}{2}\min \{d_\G(x,xw_0): x\in W\}.
\end{equation}
\end{proposition}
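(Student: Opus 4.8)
The plan is to reduce \cref{virdim} to a combinatorial identity in $\wtd W$ and then transfer the already-known superregular case to the regular case by means of the additivity of admissible sets. Recall that $d_{\Adm(\mu)}(b)$ is the maximum of the virtual dimensions $d_w(b)$ over $w\in\Adm(\mu)$; since $\mathrm{def}_{\bG}(b)$ and $\nu([b])$ are independent of $w$, writing $\delta(w):=\ell(w)+\ell(\eta(w))$ the assertion is equivalent to
\[
D(\mu):=\max_{w\in\Adm(\mu)}\delta(w)=\langle 2\rho,\mu\rangle+\ell(w_0)-\min_{x\in W}d_\G(x,xw_0).
\]
This is what \cite[Theorem~6.1]{HeYu20} gives when $\mu$ is superregular, so I would first recall the mechanism there. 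For $w=ut^\l v$ in the standard decomposition of \cref{sec:virdim} one has $\eta(w)=vu$, and using the elementary identity $\langle 2\rho,\wt(a,b)\rangle=d_\G(a,b)+\ell(a)-\ell(b)$ together with the quantum Bruhat graph description $ut^\l v\in\Adm(\mu)\iff\wt(u,v^{-1})\le\mu-\l$, the quantity $\delta(w)$ is maximized, for fixed $u,v$, at $\l=\mu-\wt(u,v^{-1})$ (dominant because $\mu$ is superregular), giving $\delta(w)=\langle 2\rho,\mu\rangle+\ell(vu)-d_\G(u,v^{-1})$. A short purely finite computation — using the triangle inequality in $\G_\Phi$ and $d_\G(a,b)\le\ell(a^{-1}b)$, with equality realized at $vu=w_0$ — then identifies $\max_{u,v}\{\ell(vu)-d_\G(u,v^{-1})\}$ with $\ell(w_0)-\min_xd_\G(x,xw_0)$.

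Now let $\mu$ be dominant regular. I would fix a superregular dominant coweight $\mu'$ (e.g.\ $\mu'=N\rho^\vee$, $N\gg0$), so that $\mu+\mu'$ is superregular, and prove $D(\mu+\mu')=D(\mu)+\langle 2\rho,\mu'\rangle$; by the superregular case this yields the formula for $\mu$. The inequality $D(\mu+\mu')\ge D(\mu)+\langle 2\rho,\mu'\rangle$ is easy: if $w=ut^\l v\in\Adm(\mu)$ is in standard form with $\delta(w)=D(\mu)$, then $t^{u\mu'}\cdot w=u\,t^{\l+\mu'}v$ lies in $\Adm(\mu')\cdot\Adm(\mu)=\Adm(\mu+\mu')$ by \cref{adm-add}; since $\mu'$ is deep this is again a standard form, so $\eta$ is unchanged and $\ell$ grows by exactly $\langle 2\rho,\mu'\rangle$ by \cref{len1}. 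For the reverse inequality I would exhibit a single good element of $\Adm(\mu)$: motivated by the superregular maximizer, set $a:=x_0\,t^{\mu-\wt(x_0,x_0w_0)}(w_0x_0^{-1})$ where $x_0$ minimizes $x\mapsto d_\G(x,xw_0)$; a direct length computation (via \cref{len1} and the identity above) gives $\delta(a)=\langle 2\rho,\mu\rangle+\ell(w_0)-\min_xd_\G(x,xw_0)$, so it remains only to verify $a\in\Adm(\mu)$, which gives $D(\mu)\ge\delta(a)$ and completes the argument.

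I expect the genuine obstacle to be exactly this membership $a\in\Adm(\mu)$. When $\mu$ is merely regular the translation part $\mu-\wt(x_0,x_0w_0)$ of $a$ need not be dominant, so the quantum Bruhat graph description of $\Adm(\mu)$ (which rests on the covering relation, hence ultimately on a depth hypothesis) is not directly available, and one cannot argue as in \cite{HeYu20}. Instead I would argue at the level of the Bruhat order on $\wtd W$: since $t^{x_0\mu'}\cdot a=x_0\,t^{(\mu+\mu')-\wt(x_0,x_0w_0)}(w_0x_0^{-1})$ lies in $\Adm(\mu+\mu')$ (checkable in the superregular regime) and since $t^{z(\mu+\mu')}=t^{z\mu}*t^{z\mu'}$ is a Demazure product (the lengths add), I would use \cref{Dem prod}, the monotonicity of downward Demazure products in \cref{Dem prod prop}, and the cancellation property \cref{len3} to ``divide out'' the maximal factor $t^{z\mu'}\in\Adm(\mu')$ and deduce $a\le t^{z\mu}$, hence $a\in\Adm(\mu)$. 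This is the step in which the additivity \cref{adm-add}, together with the improved covering relation of \cref{cover} through the sharpened description of admissible sets it yields, replaces superregularity. Combining the two inequalities then gives $D(\mu)=D(\mu+\mu')-\langle 2\rho,\mu'\rangle=\langle 2\rho,\mu\rangle+\ell(w_0)-\min_xd_\G(x,xw_0)$, as required.
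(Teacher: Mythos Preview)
Your reduction to the combinatorial identity and the upper-bound half are essentially the paper's argument: both lift an element of $\Adm(\mu)$ into $\Adm(\mu+\mu')$ via \cref{adm-add} and then invoke the known superregular case (the paper packages this as statement~(ii), $\langle\rho,\wt(x,y^{-1})\rangle\le\langle\rho,\mu-\lambda\rangle$, but the content is the same).

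The lower-bound half, however, has a genuine gap and differs from the paper. Your candidate $a=x_0\,t^{\mu-\wt(x_0,x_0w_0)}(w_0x_0^{-1})$ creates two problems you yourself flag but do not resolve. First, once you concede that $\mu-\wt(x_0,x_0w_0)$ need not be dominant, the decomposition $a=u\,t^\lambda v$ you wrote is \emph{not} the standard one of \cref{sec:virdim}, so you cannot compute $\eta(a)$ or $\ell(a)$ via \cref{len1}; your ``direct length computation'' of $\delta(a)$ is therefore unjustified. Second, the ``divide out $t^{z\mu'}$'' step is not a valid argument: knowing $t^{x_0\mu'}a\le t^{z(\mu+\mu')}$ for \emph{some} $z$ gives no reason that $z=x_0$, so the cancellation law \cref{len3} does not apply, and the downward Demazure monotonicity of \cref{Dem prod prop} does not produce a Bruhat inequality $a\le t^{z\mu}$. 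Nor does invoking the sharpened admissible-set description help, since that description still carries a depth hypothesis strictly stronger than regularity.

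The paper sidesteps all of this by changing the witness. Instead of your $a$, it takes $w=x\,t^{\mu}(xw_0)^{-1}$ for an $x\in W$ with $xw_0\ge x$ (supplied by \cite[\S5.6]{HeYu20}). Because the translation part is $\mu$ itself and $\mu$ is dominant regular, \cref{len1} applies, the standard form is genuine with $\eta(w)=w_0$, and one reads off $d_w(b)$ as in \eqref{d_w(b)}. Membership $w\in\Adm(\mu)$ is then a one-line Bruhat comparison $x\,t^{\mu}(xw_0)^{-1}\le (xw_0)\,t^{\mu}(xw_0)^{-1}$ via \cref{len1} and \cref{len3}, using only regularity of $\mu$. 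This is exactly the step you identified as the obstacle; the point is that with the right witness there is no obstacle at all.
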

\begin{proof}
We first show that

(i) if $xt^{\lambda}y \in \text{Adm}(\mu)$, then $xt^{\lambda+\mu'}y \in \text{Adm}(\mu+\mu')$ for any dominant $\mu'$.

We write $xt^{\lambda+\mu'}y=xt^\l y y^{-1}t^{\mu'}y$, and note that $y^{-1}t^{\mu'}y \in \text{Adm}(\mu')$ by definition. Hence the claim follows from an application of \cref{adm-add}.

Now we argue that statement (a) in the course of the proof in loc. sit. holds true in the absence of the superregularity condition. In other words, we need to show that

(ii) if $xt^\l y \in \text{Adm}(\mu)$ where $\mu$ is regular, then $\<\rho, \text{weight}(x,y^{-1})\> \leq \<\rho, \mu-\l\>$.

Let $xt^\l y \in \text{Adm}(\mu)$. Choose $\mu'$ to be superregular, e.g. $\mu'=n\rho^\vee$ for large enough $n$. Apply the statement in (i) to obtain $xt^{\l+\mu'} \in \text{Adm}(\mu+\mu')$. Now, $\mu+\mu'$ is superregular and thus statement (a) in loc. sit. applies to give us 
\begin{equation*}
    \<\rho, \text{weight}(x,y^{-1})\> \leq \<\rho, (\mu+\mu')-(\l+\mu')\>=\<\rho, \mu - \l\>.
\end{equation*}

This finishes the proof of (ii). Therefore, one can use this enhanced version of statement (a) to get the established upper bound (i.e. the expression in the right hand side of \cref{d_adm(mu)}) for $d_{\text{Adm}}(\mu)$ by resorting to the same proof method in loc. sit. 

In the remaining part of the proof, the authors construct an explicit $w \in \text{Adm}(\mu) \cap \CC_x$ for some $x \in W$ and argue that
\begin{equation}\label{d_w(b)}
    d_w(b)=\<\rho,\mu-\nu([b])\>-\frac{1}{2}\text{def}_{\textbf{G}}(b) +\frac{1}{2}\ell(w_0)-\frac{1}{2} d_\G(x,xw_0).
\end{equation}
To prove this, the authors employ the description of $\text{Adm}(\mu)$ established in proposition 3.3 in loc. sit. and hence it relies on the superregularity hypothesis. However, one can bypass this simply by choosing a different candidate. Namely, let $w=xt^{\mu}(xw_0)^{-1}$ for some $x\in W$ such that $xw_0 \geq x$. Note that section 5.6 of loc. sit. provides an explicit construction of such an element $x \in W$. Then we have $w \leq xw_0 t^{\mu} (xw_0)^{-1}$ by a combination of \cref{len1} and \cref{len3}. This is where we use that $\mu$ is regular. Hence, $w \in \text{Adm}(\mu)$. As has been shown in loc. sit., it now follows from definition that \cref{d_w(b)} holds true for this element. This completes the proof that \cref{d_adm(mu)} holds for dominant regular $\mu$.
\end{proof}

\begin{proof}[Proof of theorem C]
We note that once the formula for $d_{\text{Adm}(\mu)}(b)$ in \cref{d_adm(mu)} is established, the remaining part of \cite{HeYu20} focuses on showing $$\min\{d_\G(x,xw_0): x\in W\}=\ell_R(w_0).$$ However, this is a calculation done purely on the finite Weyl group $W$, and the superregularity condition does not appear in establishing this. 

Finally, the characterization of admissible sets is used once more in the proof of the theorem in section 6 in loc. sit. to show that $xt^{\mu}w_0x^{-1} \in \text{Adm}(\mu)$ for certain element $x \in W$ satisfying $xw_0 \geq x$. However, this follows directly as we have shown above. Hence, the superregularity hypothesis on $\mu$ can be avoided completely. This finishes the proof of the theorem.
\end{proof}
\begin{remark}
Note that since we work with only split groups in this article, we assume $\s=\text{id}$ in quoting statements from loc. sit. However, the conclusion of theorem C holds for quasi-split groups as well. The additional ingredient in handling this general case is the existence of an element $x \in W$ such that $\s(x)w_0 \geq x$. This can be obtained using \cite[Theorem 5.1]{HeYu20} in the following way. Let $\CO$ be the $\s$-conjugacy class of $w_0$ and define $\ell(\CO)=\min \{\ell_R(w):w \in \CO\}$. Then the aforementioned theorem asserts that for any finite Coxeter group $W$ and a length preserving graph automorphism on $W$, we have that
\begin{equation*}
    \ell(w_0)-\ell_R(\CO)=2\max\{\ell(x): x\leq \s(x)w_0\}.
\end{equation*}
Therefore, the existence of our required element is equivalent to the assertion that $\ell(w_0) > \ell_R(\CO)$. The latter statement is true in all types arising from quasi-split groups, cf. section 5.1 in loc. sit. 
\end{remark}
\begin{remark}
We remark that \cite[Remark 6.2]{HeYu20} gives an example where $\mu$ is singular and $W$ is of type $C_4$, and they point out that the dimension formula in theorem C fails in this case. Therefore, the hypothesis in theorem C is optimal. However, we do not know if the formula for $d_{\text{Adm}}(\mu)$ given in \cref{virdim} is valid in the absence of regularity condition on $\mu$.
\end{remark}
\section{A remark about the weight function}\label{sec:wt=r}
Finally, we would like to explore any possible connection between the function $\text{wt}$ and the notion of cascades. The goal of this section is to prove the following result.
\begin{proposition}\label{wt=r}
Let $W$ be an irreducible Weyl group. Then $\text{wt}(x)=r_x$ for any involution $x$ in $W$ if and only if $W$ is of type $A_n$. 
\end{proposition}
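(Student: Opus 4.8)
The plan is to read off both maps on involutions and compare them, treating Lusztig's assignment $x\mapsto r_x$ of \cite{Lusz18} as known and using three of its features: it is $W$-equivariant, i.e. $r_{wxw\i}=w(r_x)$; on a reflection it is $r_{s_\beta}=\beta^\vee$; and on a product of reflections $s_{\beta_1}\cdots s_{\beta_m}$ along mutually orthogonal roots of the Kostant cascade it equals $\sum_i\beta_i^\vee$ --- so in particular $r_{w_0}=\wt(w_0)$ by \cref{cascade}. The one combinatorial tool I would lean on is the identity $\wt(w_1w_2)=\wt(w_1)+\wt(w_2)$ valid whenever $\ell(w_1w_2)=\ell(w_1)+\ell(w_2)$: splicing the two all-downward galleries $w_1\to\cdots\to 1$ and $w_2\to\cdots\to 1$ coming from reduced words produces an all-downward gallery $w_1w_2\to\cdots\to 1$, whose weight is $\wt(w_1w_2)$ by \cite[Proposition 4.11]{VM20}; combined with $\wt(s_{\beta})=\beta^\vee$ for quantum $\beta$ this makes many $\wt$-values explicit.

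For the ``if'' direction I would show that in type $A_n$, for every involution $x\in S_{n+1}$ one has $\wt(x)=\sum_{c}\operatorname{sgn}(x(c)-c)\,e_c$, which is exactly the value of $r_x$ in type $A$ recorded in \cite{Lusz18}. Writing $x=(i_1\,j_1)\cdots(i_k\,j_k)$ with $i_s<j_s$, the right-hand side equals $\sum_s(e_{i_s}-e_{j_s})$. When the $2$-cycles are pairwise non-crossing the product $\prod_s s_{e_{i_s}-e_{j_s}}$ is length-additive and every root is quantum (type $A$ is simply laced), so the formula drops out of the additivity above. The general case I would handle by induction on $k$, producing a reduced quantum reflection decomposition of $x$ whose coroots still sum to $\sum_s(e_{i_s}-e_{j_s})$; the obstacle --- and I expect it to be the main work --- is precisely the crossing case, where the naive orthogonal decomposition is no longer length-additive, so one must argue separately that this particular coroot sum is unchanged under ``uncrossing'' even though $\wt$ itself is not conjugation-invariant.

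For the ``only if'' direction I would exhibit, for each irreducible $W$ not of type $A_n$, one involution at which the two maps disagree. In $G_2$ take $x=s_{\alpha_1+\alpha_2}$, a reflection in a short \emph{non}-quantum root: from $s_{\alpha_1+\alpha_2}=s_{\alpha_2}s_{\alpha_1}s_{\alpha_2}$ one reads off $\wt(x)=\alpha_1^\vee+2\alpha_2^\vee$, whereas $r_x=(\alpha_1+\alpha_2)^\vee=\alpha_1^\vee+3\alpha_2^\vee$. In $B_n$ with $n\ge 3$ take $x=s_{e_1}s_{e_3}$: this product is length-additive, hence $\wt(x)=\wt(s_{e_1})+\wt(s_{e_3})=2e_1+2e_3$, while $r_x=2e_1$ because $x$ is $W$-conjugate to the product of cascade reflections $s_{e_1+e_2}s_{e_1-e_2}$ and $r$ is equivariant. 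The identical mechanism handles $C_n$ ($n\ge3$) via $x=s_{2e_1}s_{2e_3}$ and $D_n$ ($n\ge4$) via the double sign-change on $e_1,e_3$ (where a suitable reduced word gives $\wt(x)=2e_1+2e_3\neq 2e_1=r_x$), and for $F_4,E_6,E_7,E_8$ one either restricts to a standard $B_3$- or $D_4$-parabolic --- recording the routine compatibility of $\wt$ and of $r$ with standard parabolics --- or checks a single involution directly (e.g.\ with \cite{Sagemath}), as was done for the exceptional computations earlier in the paper.

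All of the counterexamples are instances of one phenomenon, which is worth stating as the organizing principle of the proof: $r$ is conjugation-equivariant but $\wt$ is not, and the equivariance of $\wt$ on $\mathcal I_W$ is in fact equivalent to $W$ being of type $A_n$; the failure of equivariance is caused precisely by a cascade-type orthogonal decomposition becoming non-length-additive after conjugation, which is impossible in type $A_n$ where all roots are quantum and nested orthogonal reflections always multiply length-additively.
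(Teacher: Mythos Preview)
Your central tool---the identity $\wt(w_1w_2)=\wt(w_1)+\wt(w_2)$ whenever $\ell(w_1w_2)=\ell(w_1)+\ell(w_2)$---is false, and the appeal to \cite[Proposition~4.11]{VM20} does not save it. That proposition says every \emph{shortest} downward path from $x$ to $1$ has weight $\wt(x)$, not that every downward path does. Splicing the two RQRD paths gives a downward path of length $\ell_\downarrow(w_1)+\ell_\downarrow(w_2)$, which need not be shortest. Already in $W=S_3$ with $w_1=s_1$, $w_2=s_2s_1$ one has $w_1w_2=s_{\alpha_1+\alpha_2}$ with $\wt(w_1w_2)=\alpha_1^\vee+\alpha_2^\vee$, while $\wt(w_1)+\wt(w_2)=2\alpha_1^\vee+\alpha_2^\vee$. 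The only situation in which your splicing argument is valid is when you know independently that $\ell_\downarrow(w_1w_2)=\ell_\downarrow(w_1)+\ell_\downarrow(w_2)$; for your non-crossing case in type $A$ this happens to hold (because $\ell_\downarrow\geq\ell_R=k$ and you exhibit a $k$-step downward path), but in general---and in particular for your $B_n$, $C_n$, $D_n$ counterexamples---it does not. For instance, in $D_4$ the product $s_{e_1-e_3}s_{e_1+e_3}$ is \emph{not} length-additive ($\ell=8\neq 3+7$), so your claimed $\wt=2e_1+2e_3$ for the double sign-change is unjustified.

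This gap also leaves the ``if'' direction essentially unproved: you yourself flag the crossing case as ``the main work'' and offer only a vague inductive uncrossing plan. The paper proceeds quite differently. For the ``if'' direction it proves $\wt(x)=\sum_i\beta_i^\vee$ for the (unique, in type $A$) orthogonal decomposition of an involution $x$ by a geometric argument: one first shows $\wt(x)\geq\sum_{\beta_i\geq\alpha_j}\beta_i^\vee$ for each simple $\alpha_j$ using Rapoport's convex-polygon lemma (\cref{Perm}) applied to $t^\lambda x$ and suitable $W$-stable polytopes, takes the join over $j$, and then pins down equality via the identity $\langle\rho,\wt(x)\rangle=\mathrm{dp}(x)=\tfrac12(\ell(x)+\ell_{\mathrm{red}}(x))$ coming from \cite{BBNW16,PT12}. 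With this formula in hand, one verifies directly that $\wt$ satisfies Lusztig's characterizing properties for $r$. For the ``only if'' direction the paper gives \emph{computed} counterexamples (an explicit RQRD together with an explicit description of the $(-1)$-eigenspace cascade) in $D_4$, $B_4$, and $G_2$, and propagates them to $E_n$ and $F_4$ via parabolic subsystems---no additivity of $\wt$ is invoked.
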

Here $r_x \in \BZ\Phi^\vee$ is associated to $x$ as per the construction in \cite[Section 1.8]{Lusz18}, which we recall in the next subsection.
\subsection{Definition of $r_x$}
Let us briefly summarize the setup from \cite{Lusz18}, for more details we refer to section 1.1 and 1.8 in loc. sit. Let $W$ be an irreducible Weyl group $W$ and we denote the set of involutions in it by $\CI_W$. For an element $x \in \CI_W$, set $Y_x=\{\l \in X_*(T)_\BR: x(\l)=-\l\}$ and define $\Phi_x^\vee=\Phi^\vee \cap Y_x$. One can similarly define $(X_x,\Phi_x)$ using the action of $W$ on $X^*(T)_\BR$. It is then proved in loc. sit. that $(X_x,Y_x,\Phi_x,\Phi_x^\vee)$ forms a root system. Furthermore, $\Phi_x^{\vee +}=\Phi_x^\vee \cap \Phi^{\vee +}$ is a set of positive coroots for this system. 

Then one inductively defines the following subsets of $X_x$. Namely, let $\CE_{x,1}$ be the set of maximal elements in $\Phi_x^+$ with respect to the usual dominance order. Then for $i \geq 2$, let $\CE_{x,i}$ to be the maximal elements of $$\{\a \in \Phi_x^+: \<\a, \b^\vee\>=0 \text{ for all } \b \in \CE_{x,1} \cup \cdots \cup \CE_{x,i-1}\}.$$ 

Finally, one defines $\CE^\vee_x=\bigcup_{i \geq 1} \{\a^\vee: \a \in \CE_{x,i}\}$. Using this set, the following element of $\BZ\Phi^\vee$ is defined in loc. sit. $$r_x=\sum\limits_{\b \in \CE^\vee_x} \b.$$
 
The following result gives a characterization of $r_x$ constructed in this way.

\begin{theorem}\cite[Theorem 0.2]{Lusz18}\label{r_w}
There is a unique map $\CI_W \rightarrow \BZ \Phi$, $x \mapsto r_x$ such that (i)-(iii) below hold.

(i) $r_1=0$, $r_{s_\a}=\a^\vee$ for any $\a \in \D$;

(ii) for any $x\in \CI_W$ and $\a \in \D$ such that $s_\a x \neq xs_\a$, we have $s_\a(r_x)=r_{s_\a xs_\a}$;

(iii) for any $x\in \CI_W$ and $\a\in \D$ such that $s_ax=xs_a$, we have $r_{s_\a x}=r_x+\CN\a^\vee$ where $\CN\in \{-1,0,1\}$.

If in addition $G$ is simply laced we have 
$\CN\in\{-1,1\}$. 
\end{theorem}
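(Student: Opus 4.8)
The plan is to prove existence and uniqueness separately. For existence we take the explicit map $x\mapsto r_x=\sum_{\beta\in\CE_x^\vee}\beta$ built above from the cascade and verify (i)--(iii); for uniqueness we argue by induction on the reflection length $\ell_R(x)=\dim Y_x$, after reducing --- via standard facts about conjugacy classes of involutions in Weyl groups --- to the elements $w_J$ ($J\subseteq\Delta$ with $w_J$, the longest element of $W_J$, acting as $-1$ on $\BR J$, for which $\ell_R(w_J)=|J|$).

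\textbf{Existence.} Property (i) is immediate: $\Phi_1=\emptyset$ gives $r_1=0$, and for any root $\alpha$ the system $\Phi_{s_\alpha}=\{\pm\alpha\}$ has cascade $\{\alpha\}$, so $r_{s_\alpha}=\alpha^\vee$. For (ii), $s_\alpha x\ne xs_\alpha$ is equivalent to $x(\alpha)\ne\pm\alpha$, hence $\alpha\notin\Phi_x$; then $s_\alpha$ permutes $\Phi^+\setminus\{\alpha\}\supseteq\Phi_x^+$ and satisfies $s_\alpha(Y_x)=Y_{s_\alpha xs_\alpha}$, so it restricts to an isomorphism $(\Phi_x,\Phi_x^+)\to(\Phi_{s_\alpha xs_\alpha},\Phi_{s_\alpha xs_\alpha}^+)$ of root systems with positive systems. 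Because the cascade is built intrinsically from such data (highest roots of the irreducible components, then the same recipe applied to the subsystem orthogonal to them, and so on), we get $s_\alpha(\CE_{x,i})=\CE_{s_\alpha xs_\alpha,i}$ for all $i$, hence $s_\alpha(r_x)=r_{s_\alpha xs_\alpha}$. For (iii), $s_\alpha x=xs_\alpha$ means $x(\alpha)=\pm\alpha$: if $x(\alpha)=\alpha$ then $Y_{s_\alpha x}=Y_x\oplus\BR\alpha^\vee$, and if $x(\alpha)=-\alpha$ then $\alpha^\vee\in Y_x$ and $Y_{s_\alpha x}=Y_x\cap\alpha^\perp$. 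Either way the two spaces differ by the line $\BR\alpha^\vee$, the root systems $\Phi^\vee\cap Y_x$ and $\Phi^\vee\cap Y_{s_\alpha x}$ are nested and differ only in the coroots involving $\alpha^\vee$, and a direct comparison of their cascades yields $r_{s_\alpha x}-r_x\in\{-\alpha^\vee,0,\alpha^\vee\}$. In the simply-laced case one refines this: all coroots have equal length, and adjoining or deleting the line $\BR\alpha^\vee$ always changes $\sum_{\beta\in\CE^\vee}\beta$, ruling out $\CN=0$.

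\textbf{Uniqueness.} Let $r,r'$ both satisfy (i)--(iii) and set $d_x=r_x-r'_x\in\BZ\Phi^\vee$. By (i), $d_1=0$ and $d_{s_\alpha}=0$ for $\alpha\in\Delta$. First, $d$ vanishes on every reflection: write $s_\beta=ws_{\alpha'}w^{-1}$ with $\alpha'\in\Delta$, fix a reduced word $w=s_{i_m}\cdots s_{i_1}$, and run the chain $y_0=s_{\alpha'}$, $y_j=s_{i_j}y_{j-1}s_{i_j}$, $y_m=s_\beta$; at a step where $s_{i_j}$ commutes with $y_{j-1}$ one has $y_j=y_{j-1}$, and otherwise (ii) gives $d_{y_j}=s_{i_j}(d_{y_{j-1}})$, so $d_{y_j}=0$ for all $j$. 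The identical chain argument shows that $d$ vanishes on one element of a conjugacy class exactly when it vanishes on all of it. Now induct on $k=\ell_R(x)$; the cases $k\le1$ are done. For $k\ge2$, conjugate $x$ to $w_J$ with $|J|=k$, so it suffices to prove $d_{w_J}=0$. Pick distinct $\alpha,\beta\in J$. Then $s_\alpha w_J$ and $s_\beta w_J$ are involutions of reflection length $k-1$ (since $\alpha,\beta\in\Phi_{w_J}$ and $w_J$ acts by $-1$ on $\BR J$), so $d$ vanishes on them by induction; and $s_\alpha,s_\beta$ commute with $w_J$, so (iii) gives $0=d_{w_J}+\epsilon_\alpha\alpha^\vee$ and $0=d_{w_J}+\epsilon_\beta\beta^\vee$ for integers $\epsilon_\alpha,\epsilon_\beta$. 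Hence $\epsilon_\alpha\alpha^\vee=\epsilon_\beta\beta^\vee$; since $\alpha^\vee,\beta^\vee$ are linearly independent, $\epsilon_\alpha=\epsilon_\beta=0$ and $d_{w_J}=0$. Therefore $r=r'$.

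\textbf{Expected main obstacle.} The delicate step is the verification of (iii) in the existence argument: controlling how the cascade of $\Phi^\vee\cap Y_x$ changes when $Y_x$ is enlarged or shrunk by one coordinate line $\BR\alpha^\vee$, and establishing that the change in $\sum_{\beta\in\CE^\vee}\beta$ is always $0$ or $\pm\alpha^\vee$ (with $0$ excluded in the simply-laced case). As the irreducible components of any such $\Phi_x$ must have $w_0=-1$ (types $A_1$, $B_n$, $C_n$, $D_{2n}$, $E_7$, $E_8$, $F_4$, $G_2$), this appears to require a case inspection of how adjoining an $A_1$-factor, or removing a simple node, interacts with each such component type. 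Everything else --- the remaining parts of existence and all of uniqueness --- is then a formal consequence of (i)--(iii) together with the reduction to the $w_J$.
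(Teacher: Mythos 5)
This statement is quoted verbatim from \cite[Theorem 0.2]{Lusz18}; the paper gives no proof of it (it only recalls the cascade construction of $r_x$ in Section 8.1), so there is no in-paper argument to compare against and your proposal must stand on its own. Your uniqueness half does stand: the chain argument showing that $d_x=r_x-r'_x$ is constant (up to the $W$-action from (ii)) along conjugation by simple reflections, the reduction to the standard representatives $w_J$ with $w_J=-1$ on $\BR J$, and the two-simple-roots trick forcing $d_{w_J}=0$ for $|J|\ge 2$ are all correct and complete modulo the standard Richardson--Springer classification of involution classes. Your verifications of (i) and (ii) in the existence half are also essentially fine (with the small caveat that one must check the paper's definition of $\CE_{x,1}$ via ``maximal elements in the usual dominance order'' really is intrinsic to $(\Phi_x,\Phi_x^+)$, i.e.\ picks out the highest roots of the irreducible components, before invoking $s_\a$-equivariance).

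The genuine gap is exactly where you flag it: property (iii) is asserted, not proved. Saying that $\Phi^\vee\cap Y_x$ and $\Phi^\vee\cap Y_{s_\a x}$ ``differ only in the coroots involving $\a^\vee$'' and that ``a direct comparison of their cascades yields $r_{s_\a x}-r_x\in\{-\a^\vee,0,\a^\vee\}$'' hides all of the content: enlarging $Y_x$ by $\BR\a^\vee$ can merge irreducible components of $\Phi_x$ into a larger component with a new highest root, so the cascade can be rearranged globally, not just augmented by $\{\a\}$; controlling the net change of $\sum_{\b\in\CE^\vee}\b$ is the substance of Lusztig's proof and requires the type-by-type analysis you defer. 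The simply-laced refinement $\CN\in\{-1,1\}$ is likewise only a heuristic (``adjoining or deleting the line always changes the sum''), not an argument. As written, the proposal establishes uniqueness but not existence, i.e.\ it does not show that any map satisfying (i)--(iii) exists, which is the half of the theorem the paper actually uses (via the cascade formula for $r_x$).
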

\subsection{The notion of $W$-depth}
We now recall some feature about a statistic defined on Coxeter groups that will be relevant for us in the next subsection. For a Coxeter system $(W,\BS)$ and a set of positive roots $\Phi^+$ in it, \cite[Section 4.6]{BB05} defines \emph{$W$-depth}\footnote{In fact, this is called \emph{depth} in \cite{BB05}, \cite{BBNW16} and \cite{PT12}; we alter the terminology here to avoid any confusion with the notion of depth of a coweight defined earlier.} of an element $\b \in \Phi^+$ by $\text{dp}(\b):=\min\{k: x\cdot \b \in -\Phi^+ \text{ for some }x\in W \text{ with } \ell(x)=k\}.$

It is a classically known fact that one can use this to give a partial order on the set of roots, cf. \cite{BB05}. In \cite{PT12}, Petersen and Tenner extend this concept to define the following function on $W$ (still denoted by dp)
\[\text{dp}(x):=\min\{\sum\limits_{i}\text{dp}(\b_i): x=s_{\b_1}\cdots s_{\b_k}, \b_i \in \Phi^+\}.\]

It is easy to see that $\text{dp}(s_{\b})=\text{dp}(\b)=\frac{1}{2}(\ell(s_{\b})+1)$ for any positive root $\b$. Hence we have that $\text{dp}(s_\b) \leq \<\rho,\b^\vee\>$, and equality occurs if and only if $\b$ is a quantum root. In particular, if $x$ is an element of a Weyl group of simply laced type, we have 
\begin{equation}\label{dp}
    \text{dp}(x)=\min\{\sum\limits_{i}\<\rho,\b_i^\vee\>: x=s_{\b_1}\cdots s_{\b_k}, \b_i \in \Phi^+\}.
\end{equation}

Following \cite{BBNW16}, we say that \emph{the $W$-depth of $x$ is realized by a reduced factorization of $x$} if there exists an expression $x=s_{\b_1}\cdots s_{\b_k}$ with $\b_i \in \Phi^+$, such that $\ell(x)=\sum\limits_{i=1}^k \ell(s_{\b_i})$ and $\text{dp}(x)=\sum\limits_{i=1}^k \text{dp}(s_{\b_i})$.

We also recall the notion of \emph{reduced reflection length}\footnote{In the same vein, we alter the terminology $\ell_R$ used in loc. sit. for the definition of this length function and call it $\ell_{\text{red}}$.} $\ell_{\text{red}}$ introduced in loc. sit. Essentially, its definition is parallel to \cref{RQRD}, without the quantum root proviso - i.e. it is defined using a decomposition that satisfies only the last two conditions in that definition.

It is proved in \cite{PT12} that the $W$-depth of every element in a classical finite Coxeter group is realized by reduced factorization. This observation plays a central role in the following result.
\begin{proposition}\cite[Proposition 6.6]{BBNW16}\label{red-dp}
If the $W$-depth of $x$ is realized by a reduced factorization then $\text{dp}(x)=\frac{1}{2}(\ell(x)+\ell_{\text{red}}(x))$; in particular, this equality holds whenever $W$ is a classical finite Coxeter group.
\end{proposition}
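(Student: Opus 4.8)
The plan is to prove the displayed equality by squeezing $\text{dp}(x)$ between two copies of $\tfrac12(\ell(x)+\ell_{\text{red}}(x))$: the upper bound comes for free from the definition of $\text{dp}$ as a minimum, while the lower bound is exactly where the hypothesis is used. The only inputs are the subadditivity of $\ell$ under products, the reflection-length formula $\text{dp}(s_\beta)=\tfrac12(\ell(s_\beta)+1)$, and the minimality clauses in the definitions of $\text{dp}$ and $\ell_{\text{red}}$; the ``in particular'' clause then follows by quoting the realization theorem of Petersen and Tenner for classical types.

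First I would establish $\text{dp}(x)\le\tfrac12(\ell(x)+\ell_{\text{red}}(x))$, which needs no hypothesis. Pick a reflection factorization $x=s_{\gamma_1}\cdots s_{\gamma_m}$ with $m=\ell_{\text{red}}(x)$ and $\ell(x)=\sum_{i=1}^m\ell(s_{\gamma_i})$; such a factorization exists by the very definition of $\ell_{\text{red}}$. Feeding it into the definition of $\text{dp}(x)$ as a minimum over reflection factorizations and using $\text{dp}(s_\gamma)=\tfrac12(\ell(s_\gamma)+1)$ for every positive root $\gamma$, we obtain
\[\text{dp}(x)\le\sum_{i=1}^m\text{dp}(s_{\gamma_i})=\frac12\Big(\sum_{i=1}^m\ell(s_{\gamma_i})+m\Big)=\frac12\big(\ell(x)+\ell_{\text{red}}(x)\big).\]

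Next comes the reverse inequality, where the hypothesis enters. Let $x=s_{\beta_1}\cdots s_{\beta_k}$ be a reduced factorization realizing the $W$-depth, i.e.\ $\ell(x)=\sum_{i=1}^k\ell(s_{\beta_i})$ and $\text{dp}(x)=\sum_{i=1}^k\text{dp}(s_{\beta_i})$. Applying $\text{dp}(s_{\beta_i})=\tfrac12(\ell(s_{\beta_i})+1)$ together with the length-additivity property gives
\[\text{dp}(x)=\sum_{i=1}^k\text{dp}(s_{\beta_i})=\frac12\Big(\sum_{i=1}^k\ell(s_{\beta_i})+k\Big)=\frac12\big(\ell(x)+k\big).\]
Since this factorization is length-additive into $k$ reflections, $k\ge\ell_{\text{red}}(x)$ by definition of $\ell_{\text{red}}$ as the minimal such number; hence $\text{dp}(x)\ge\tfrac12(\ell(x)+\ell_{\text{red}}(x))$, and combining with the previous display yields the claimed equality.

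For the final clause one invokes the theorem of Petersen and Tenner \cite{PT12}, already recalled above, that for every element of a classical finite Coxeter group the $W$-depth is realized by a reduced factorization; the first part of the proposition then applies verbatim. I do not expect any genuine obstacle in the two inequalities, as they are formal consequences of the definitions of $\text{dp}$, $\ell_{\text{red}}$, and the identity $\text{dp}(s_\beta)=\tfrac12(\ell(s_\beta)+1)$; the real content sits entirely in the Petersen–Tenner realization result, which should be treated as a black box, and the only point worth double-checking is that $\text{dp}(s_\beta)=\tfrac12(\ell(s_\beta)+1)$ is available in the generality in which the proposition is stated.
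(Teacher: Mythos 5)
Your argument is correct. Note that the paper itself gives no proof of this proposition -- it is quoted verbatim from \cite{BBNW16} (Proposition 6.6 there) with the Petersen--Tenner realization result \cite{PT12} cited separately for the ``in particular'' clause -- so there is no in-paper proof to compare against. Your two-inequality squeeze is the natural (and essentially the only) argument: the upper bound $\text{dp}(x)\le\tfrac12(\ell(x)+\ell_{\text{red}}(x))$ follows by feeding a length-additive factorization of minimal size into the minimum defining $\text{dp}$, and the lower bound follows by reading the hypothesized realizing factorization backwards and using the minimality of $\ell_{\text{red}}$. The one ingredient you flag, $\text{dp}(s_\beta)=\tfrac12(\ell(s_\beta)+1)$ for an arbitrary positive root, is asserted in the paper as ``easy to see'' just before the definition of reduced factorizations, and it is indeed established in the cited sources for general finite Coxeter groups, so your use of it is legitimate.
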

\subsection{Relation between wt and dp}
\begin{lemma}\label{wt-sum-dp}
Let $W$ be an irreducible Weyl group of type $A_n$ or $D_n$. Then we have $\<\rho,\text{wt}(x)\>=\text{dp}(x)$ for any element $x \in W$.
\end{lemma}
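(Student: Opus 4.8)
The strategy is to reduce the identity to the combinatorial formula for $\text{dp}$ recorded in \cref{red-dp}, after first observing that in a simply laced type the reduced reflection length $\ell_{\text{red}}$ and the minimal quantum reflection length $\ell_\downarrow$ coincide.

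First I would record the simply laced simplification. Since $W$ is of type $A_n$ or $D_n$, \cref{qroot} shows that every positive root is a quantum root, so the quantum root proviso in \cref{RQRD} is automatically satisfied. Consequently a reduced quantum reflection decomposition of $x$ is nothing but a factorization $x = s_{\beta_1}\cdots s_{\beta_k}$ into reflections $s_{\beta_i}$ (with $\beta_i \in \Phi^+$) satisfying the length additivity $\ell(x) = \sum_{i=1}^k \ell(s_{\beta_i})$ with $k$ as small as possible; in particular $\ell_\downarrow(x) = \ell_{\text{red}}(x)$ for every $x \in W$, since dropping the quantum condition from the definition of $\ell_{\text{red}}$ changes nothing here.

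Next I would compute $\langle \rho, \wt(x)\rangle$ from such a decomposition. Fix $x = s_{\beta_1}\cdots s_{\beta_k}$ with $k = \ell_\downarrow(x)$; by the discussion following \cref{RQRD} (which rests on \cite[Proposition 4.11]{VM20}) we have $\wt(x) = \sum_{i=1}^k \beta_i^\vee$. As each $\beta_i$ is a quantum root, $\langle \rho, \beta_i^\vee\rangle = \tfrac12(\ell(s_{\beta_i}) + 1) = \text{dp}(s_{\beta_i})$, and combining with the length additivity above,
\[
\langle \rho, \wt(x)\rangle \;=\; \sum_{i=1}^k \langle \rho, \beta_i^\vee\rangle \;=\; \frac12\Bigl(\sum_{i=1}^k \ell(s_{\beta_i}) + k\Bigr) \;=\; \frac12\bigl(\ell(x) + \ell_\downarrow(x)\bigr).
\]
On the other hand $W$ is a classical finite Coxeter group, so \cref{red-dp} applies and gives $\text{dp}(x) = \tfrac12\bigl(\ell(x) + \ell_{\text{red}}(x)\bigr)$. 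Since $\ell_{\text{red}}(x) = \ell_\downarrow(x)$ by the first step, the two expressions agree and the lemma follows.

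The argument is essentially an assembly of already-established facts, so I do not expect a serious obstacle; the one point that warrants a moment's care is the identification $\ell_{\text{red}}(x) = \ell_\downarrow(x)$, which is immediate from \cref{qroot} but should be stated explicitly, and the verification that \cref{red-dp} — hence the theorem of \cite{PT12} that $W$-depth is realized by a reduced factorization — is available in type $D_n$ and not merely type $A_n$, which it is, $D_n$ being a classical finite Coxeter group. The genuine content of the section lies rather in what follows, namely producing involutions in the remaining types for which $\text{wt}(x) \neq r_x$.
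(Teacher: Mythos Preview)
Your proof is correct and follows essentially the same route as the paper: both establish $\langle \rho, \wt(x)\rangle = \tfrac12(\ell(x) + \ell_\downarrow(x))$, note that $\ell_\downarrow = \ell_{\text{red}}$ in simply laced type via \cref{qroot}, and then invoke \cref{red-dp}. The only difference is that the paper obtains the identity $\langle \rho, \wt(x)\rangle = \tfrac12(\ell(x) + \ell_\downarrow(x))$ by specializing a general formula from \cite[Section 4.2]{HeYu20} to $y=1$, whereas you derive it directly from a reduced quantum reflection decomposition; your computation is slightly more self-contained but otherwise equivalent.
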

\begin{proof}
Note that for any irreducible Weyl group $W$ and for any two elements $x,y \in W$, the following holds true by \cite[Section 4.2, Statement (a)]{HeYu20}:
\[\ell(y)=\ell(x)-\<2\rho,\text{wt}(x,y)\>+d_{\G}(x,y).\]
Letting $y=1$, we get 
\begin{equation}\label{eqn-wt-sum}
    \<\rho,\text{wt}(x)\>=\frac{1}{2}(\ell(x)+d_{\G}(x,1))=\frac{1}{2}(\ell(x)+\ell_\downarrow(x))
\end{equation}
Since all roots are quantum in a group of simply laced type, we have $\ell_\downarrow(x)=\ell_{\text{red}}(x)$. Now, we get the desired conclusion appealing to \cref{red-dp}. 
\end{proof}
\begin{remark}
Note that in general we have $\<\rho,\text{wt}(x)\>\geq\text{dp}(x)$ just by appealing to the definition. If $W$ is an irreducible Weyl group of non-simply laced type, the existence of an element $x \in W$ with $\ell_\downarrow(x) > \ell_{\text{red}}$ ensures that strict inequality do occur. We refer to \cref{counter2} for explicit example of such elements. As noted in \cite[Question 6.5]{BBNW16}, it is hard to verify by a computer whether the $W$-depth of all elements are realized by reduced factorization, even for a group of type $E_7$. Hence it is not clear if \cref{wt-sum-dp} holds true for the groups of type $E_6, E_7, E_8$ as well.
\end{remark}

\subsection{Weight of an involution}

It is classically known that if $x$ is an involution in an irreducible Weyl group of rank $n$, then it can be written as a product of at most $\ell_R(x)$ commuting reflections, where $\ell_R(x) \leq n$. This fact is needed in the statement of \cref{perp-wt} below. We also need the following lemma.
\begin{lemma}\label{Perm}\cite[Lemma 3.3]{Rapoport05}
Let $P$ be a $W$-stable convex polygon in $X_*(T)_{\BR}$. Let $v \in \ba$ and assume that $w_1, w_2$ are two elements of $\wtd W$ such that $w_1 \leq w_2$. Then we have $w_1v-v \in P$, if $w_2v-v \in P$.
\end{lemma}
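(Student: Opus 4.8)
The plan is to reduce the statement to a single covering relation $w_1 \lessdot w_2$ in $\wtd W$ and then to pass from $w_2 v - v \in P$ to $w_1 v - v \in P$ by a one-step reflection, using only that $P$ is convex and $W$-stable. Concretely, given $w_1 \leq w_2$ I would pick a saturated chain $w_1 = u_0 \lessdot u_1 \lessdot \cdots \lessdot u_N = w_2$ and observe that it suffices to establish, for each cover $u \lessdot w$ occurring in such a chain, the implication $wv - v \in P \Rightarrow uv - v \in P$: iterating this down the chain, starting from $u_N v - v = w_2 v - v \in P$, delivers $u_0 v - v = w_1 v - v \in P$. So from now on one may fix a cover $u \lessdot w$.

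Next I would record the geometry of the cover. One writes $w = s_H u$, where $s_H = t^{m\gamma^\vee}s_\gamma$ is the affine reflection in the hyperplane $H = H_{\gamma,m} = \{\xi \in X_*(T)_\BR : \langle \xi, \gamma\rangle = m\}$ with $\gamma \in \Phi^+$ and $m \in \BZ$; such a presentation exists because $w = u\,s_\beta$ for an affine reflection $s_\beta$, so that $s_H := u s_\beta u^{-1}$ is the reflection in $H := u(H_\beta)$. The crucial input is that $H$ does \emph{not} separate the base alcove $\ba$ from $u\,\ba$; this is the standard fact that for a reflection $t$ one has $tu > u$ if and only if $H_t$ does not separate $\ba$ from $u\,\ba$ (cf.~\cite{BB05}), applied to $w = s_H u > u$. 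Consequently $v$ and $uv$ lie on the same closed side of $H$, i.e.~$\langle v, \gamma\rangle - m$ and $\langle uv, \gamma\rangle - m$ are both $\geq 0$ or both $\leq 0$.

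Then comes the reflection step. Since $s_H(\xi) = \xi - (\langle\xi,\gamma\rangle - m)\gamma^\vee$, one gets $wv - v = (uv - v) - a\gamma^\vee$ with $a := \langle uv,\gamma\rangle - m$; and since $P$ is $W$-stable, $s_\gamma(wv-v) \in P$, and using $s_\gamma\xi = \xi - \langle\xi,\gamma\rangle\gamma^\vee$ together with $s_\gamma\gamma^\vee = -\gamma^\vee$ one checks that $s_\gamma(wv - v) = (uv-v) - b\gamma^\vee$ with $b := m - \langle v,\gamma\rangle$. The sign condition above yields $ab = -(\langle uv,\gamma\rangle - m)(\langle v,\gamma\rangle - m) \leq 0$, so $0$ lies between $a$ and $b$; hence $uv - v$ lies on the segment joining $wv - v$ and $s_\gamma(wv - v)$, both of which are in $P$, and convexity of $P$ forces $uv - v \in P$. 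This completes the covering step, hence the lemma. (This is essentially the argument in \cite{Rapoport05}.)

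The step I expect to be the main obstacle — indeed the only genuinely non-formal ingredient — is the equivalence, in the affine Weyl group, between $s_H u > u$ and ``$H$ does not separate $\ba$ from $u\,\ba$'': this is precisely where the combinatorics of the Bruhat order is tied to the geometry of the apartment. Everything else is elementary linear algebra combined with the two hypotheses on $P$. One degenerate case to keep in mind is $v \in H$ (possible only when $v$ lies on the boundary of $\ba$), where $b = 0$ and $uv - v = s_\gamma(wv - v) \in P$ directly; the inequality $ab \leq 0$ is phrased so as to absorb this case uniformly.
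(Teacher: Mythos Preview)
Your argument is correct and is essentially the original proof from \cite[Lemma~3.3]{Rapoport05}; the present paper does not supply its own proof of this lemma but simply quotes it, so there is nothing further to compare. The one point worth double-checking is the orientation of the separation criterion, and you have it right: for an affine reflection $s_H$ one has $s_H u > u$ precisely when $H$ does \emph{not} separate $\ba$ from $u\ba$, which is what forces $(\langle v,\gamma\rangle - m)$ and $(\langle uv,\gamma\rangle - m)$ to have the same sign and hence $ab \le 0$.
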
 
In our case, we are going to let $P$ to be $P_\z:=\text{Conv} (W \z)$ for certain dominant $\z$; this is the convex hull of points in its $W$-orbit. Recall that if $\theta=\sum\limits_{i=1}^n m_i\a_i$, then the vertices of $\ba$ are given by $\{\frac{\varpi^\vee}{m_i}: 1\leq i \leq n\}$, where $\{\varpi_i^\vee: 1\leq i\leq n\}$ is the set of fundamental coweights. 
\begin{proposition}\label{perp-wt}
Suppose that $W$ is a Weyl group of type $A_n$. Let $x\in W$ be an involution. Suppose that $x=s_{\b_1}\cdots s_{\b_k}$, where $k\leq n$ and $\{\b_i: 1\leq i \leq k\}$ is a set of orthogonal positive roots. Then \[\text{wt}(x)=\sum\limits_{i=1}^k \b_i^\vee.\] 
\end{proposition}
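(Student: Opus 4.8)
The claim is that for an involution $x$ in type $A_n$, any decomposition $x=s_{\b_1}\cdots s_{\b_k}$ into $k\le n$ pairwise orthogonal positive roots computes $\wt(x)$ as $\sum_i \b_i^\vee$. My strategy has two halves: first reduce to the ``reduced quantum reflection decomposition'' framework of \cref{RQRD}, and second handle the inequality in each direction, with the nontrivial direction handled via the Bruhat-order estimate of \cref{Perm} applied to a suitably chosen polytope.

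\textbf{Step 1: the orthogonal decomposition is length-additive and quantum.}
First I would observe that in type $A_n$ all roots are long, hence all roots are quantum (\cref{qroot}), so condition (1) of \cref{RQRD} is automatic. Since the $\b_i$ are pairwise orthogonal, the reflections $s_{\b_i}$ commute; I would argue that for pairwise orthogonal roots one has $\ell(s_{\b_1}\cdots s_{\b_k})=\sum_i \ell(s_{\b_i})$, so condition (2) holds. This can be seen, e.g., by induction: if $x'=s_{\b_2}\cdots s_{\b_k}$ fixes the hyperplane $\b_1^\perp$ pointwise-complement setwise, then $\operatorname{Inv}(s_{\b_1})\cap\operatorname{Inv}((x')^{-1})=\emptyset$ because a positive root $\g$ with $s_{\b_1}\g\in-\Phi^+$ forces $\g$ to lie in the plane spanned by $\b_1$ hence $\g=\b_1$ (in $A_n$, $\langle\g,\b_1^\vee\rangle=2$ implies $\g=\b_1$; this is exactly the simply-laced fact proved in \cref{sec:cover}), and $x'\b_1=\b_1$. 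Then \cref{len2} gives the additivity. Consequently $s_{\b_1}\cdots s_{\b_k}$ satisfies the first two conditions of \cref{RQRD}; by \cref{monotone-wt}-type reasoning (or directly by \cref{wt-x-y}(3), since this decomposition gives a path of weight $\sum\b_i^\vee$ from $x$ to $1$ using only downward edges) we get $\wt(x)\le\sum_i\b_i^\vee$ in dominance order --- this is the easy inequality.

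\textbf{Step 2: the reverse inequality via \cref{Perm}.}
The reverse inequality $\wt(x)\ge\sum_i\b_i^\vee$ is the main obstacle. The idea: by \cref{sec:wt=r}'s formula \eqref{eqn-wt-sum}, $\langle\rho,\wt(x)\rangle=\tfrac12(\ell(x)+\ell_\downarrow(x))$, and in simply-laced type $\ell_\downarrow(x)=\ell_{\mathrm{red}}(x)$; since the $\b_i$ are orthogonal, $\ell_{\mathrm{red}}(x)\le k=\ell_R(x)$, and in fact for an involution $\ell_R(x)=\ell_\downarrow(x)=k$ (an involution in $A_n$ has reflection length equal to the number of orthogonal roots needed, and this is minimal). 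So $\langle\rho,\wt(x)\rangle=\tfrac12(\ell(x)+k)=\tfrac12\sum_i(\ell(s_{\b_i})+1)=\langle\rho,\sum_i\b_i^\vee\rangle$ using $\ell(s_{\b_i})=\langle2\rho,\b_i^\vee\rangle-1$. Combined with the dominance inequality $\wt(x)\le\sum\b_i^\vee$ from Step 1, and the fact that $\langle\rho,-\rangle$ is strictly monotone on the dominance order (if $\mu\le\mu'$ and $\langle\rho,\mu\rangle=\langle\rho,\mu'\rangle$ then $\mu=\mu'$, since $\mu'-\mu$ is a nonnegative combination of positive roots each pairing positively with $\rho$), we conclude $\wt(x)=\sum_i\b_i^\vee$. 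I would note that the role of \cref{Perm} can be replaced here by the cleaner $\langle\rho,-\rangle$ argument, but if one wants to keep the geometric flavor, one sets $P=P_\zeta$ with $\zeta$ the dominant representative of $\sum\b_i^\vee$, observes $x\cdot 0 - 0$-type displacements land in $P$ since $x\le t^\lambda$-style bounds hold, and reads off the result; I expect the $\langle\rho,-\rangle$ route to be the one that goes through with least friction.

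\textbf{Where the difficulty lies.}
The genuinely delicate point is justifying $\ell_\downarrow(x)=k$ (equivalently $\ell_{\mathrm{red}}(x)=\ell_R(x)=k$) for an involution written via $k$ orthogonal roots: one must rule out a shorter quantum-reflection decomposition. In type $A_n$ this follows because $\wt(x)=\sum_i\b_i^\vee$ is forced to have ``$\rho$-height'' exactly $\tfrac12(\ell(x)+k)$ as computed, and any shorter decomposition $x=s_{\g_1}\cdots s_{\g_j}$ with $j<k$ satisfying length-additivity would give $\langle\rho,\wt(x)\rangle\le\tfrac12(\ell(x)+j)<\tfrac12(\ell(x)+k)$, contradicting Step 1's lower bound $\langle\rho,\wt(x)\rangle\ge\langle\rho,\sum\b_i^\vee\rangle$ once that is established --- so the two steps are mutually reinforcing and I would present them together, deriving the dominance inequality $\wt(x)\le\sum\b_i^\vee$ first (unconditionally, from \cref{wt-x-y}(3)), then the $\rho$-height equality, then concluding. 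The orthogonality of the $\b_i$ and the $A_n$-specific fact ``$\langle\g,\b^\vee\rangle=2\Rightarrow\g=\b$'' are what make the length-additivity clean; outside type $A_n$ the minimal reduced length $\ell_\downarrow$ can exceed $\ell_R$, which is precisely why the proposition is stated only for $A_n$.
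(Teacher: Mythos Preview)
Your Step 1 contains a genuine error that propagates through the whole argument. You claim that for pairwise orthogonal positive roots $\b_1,\ldots,\b_k$ in type $A_n$ one has $\ell(s_{\b_1}\cdots s_{\b_k})=\sum_i\ell(s_{\b_i})$. This is false. Take $W=S_4$ (type $A_3$) and $\b_1=e_1-e_3=\a_1+\a_2$, $\b_2=e_2-e_4=\a_2+\a_3$; these are orthogonal, but $x=s_{\b_1}s_{\b_2}=(1\,3)(2\,4)$ has $\ell(x)=4$ while $\ell(s_{\b_1})+\ell(s_{\b_2})=3+3=6$. Your justification via inversion sets fails because $\mathrm{Inv}(s_{\b_1})$ contains all $\ell(s_{\b_1})$ positive roots sent negative by $s_{\b_1}$, not just $\b_1$ itself; the simply-laced fact ``$\langle\g,\b_1^\vee\rangle=2\Rightarrow\g=\b_1$'' does not imply $\mathrm{Inv}(s_{\b_1})=\{\b_1\}$.

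This breaks both halves of your plan. In Step 1 the orthogonal decomposition is not length-additive, hence does not define a path of downward edges in $\G_\Phi$, so \cref{wt-x-y}(3) does not yield $\wt(x)\le\sum_i\b_i^\vee$. In Step 2 your claim $\ell_\downarrow(x)=k$ collapses: in the example above $\ell_\downarrow(x)=4$ (e.g.\ $x=s_2s_1s_3s_2$ is a reduced quantum reflection decomposition) while $k=\ell_R(x)=2$. The $\rho$-height equality $\langle\rho,\wt(x)\rangle=\langle\rho,\sum_i\b_i^\vee\rangle$ does happen to hold, but not for the reason you give; it requires the nontrivial input of \cref{red-dp} (the $W$-depth formula from \cite{BBNW16}) together with the explicit type-$A$ formula $\mathrm{dp}(\sigma)=\sum_{\sigma(i)>i}(\sigma(i)-i)$, which for an involution written in disjoint transpositions gives exactly $\sum_i\langle\rho,\b_i^\vee\rangle$.

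The paper's proof goes in the opposite direction: it establishes the dominance inequality $\wt(x)\ge\sum_i\b_i^\vee$ (not $\le$) by a convexity argument. One chooses $\l$ deep enough, uses \cref{easy >} to get $t^\l x\ge t^{\l-\wt(x)}$, and then applies \cref{Perm} with $v=\varpi_j^\vee$ and $P=\mathrm{Conv}(W\cdot\zeta_j)$ where $\zeta_j=t^\l x\,\varpi_j^\vee-\varpi_j^\vee=\l-\sum_{\b_i\ge\a_j}\b_i^\vee$. This yields $\wt(x)\ge\sum_{\b_i\ge\a_j}\b_i^\vee$ for every $j$, and taking the join over $j$ gives $\wt(x)\ge\sum_i\b_i^\vee$. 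Only then does the $\rho$-height equality (via \cref{wt-sum-dp}) finish the proof. So the use of \cref{Perm} is not a stylistic alternative you can trade for a cleaner $\langle\rho,-\rangle$ argument; it is doing the actual work of producing one of the two dominance inequalities, and your substitute for it does not go through.
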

\begin{proof}
Let us first note that it suffices to show $\text{wt}(x)\geq \sum\limits_{i=1}^k \b_i^\vee$. Indeed, we then have \[\<\rho, \text{wt}(x)\> \geq \<\rho, \sum\limits_{i=1}^k \b_i^\vee\>.\] By \cref{wt-sum-dp} and \cref{dp}, the above  must be an equality. Therefore, writing $\text{wt}(x) - \sum\limits_{i=1}^k \b_i^\vee = \sum\limits_{\a \in \D} n_\a \a^\vee$ with $n_\a \in \BZ_{\geq 0}$, we get $\sum\limits_{\a \in \D} n_\a=0$, thereby showing that $n_\a=0$ for each $\a \in \D$. Hence we have the desired equality.

\smallskip
Let us choose $\l \in X_*(T)$ such that $\text{depth}(\l) \geq 2n$. By \cref{easy >}, we then have $t^\l x \geq  t^{\l-\text{wt}(x)}$. An easy computation shows that $$t^\l x \varpi_j^\vee-\varpi_j^\vee=\l-\sum\limits_{i=1}^k \<\b_i, \varpi_j^\vee\>\b_i^\vee=\l -\sum\limits_{\b_i \geq \a_j}\b_i^\vee.$$
We set $\z_j:=\l -\sum\limits_{\b_i \geq \a_j}\b_i^\vee$. Note that at most $n$ coroots are subtracted in the expression of $\z_j$, and since the maximum value of $\<\a,\b_i^\vee\>$ is $2$ for any simple root $\a$, the depth hypothesis on $\l$ ensures that $\z_j$ is dominant. Applying \cref{Perm}, we get that $\l-\text{wt}(x)\in P_{\z_j}$. Since $\l-\text{wt}(x)$ is dominant, we conclude that $\l-\text{wt}(x) \leq \z_j$, i.e.
\begin{equation}\label{join}
    \text{wt}(x) \geq \sum\limits_{\b_i \geq \a_j}\b_i^\vee.
\end{equation}
Since \cref{join} is valid for any $j=1,\cdots,n$, we get that $$\text{wt}(x) \geq \bigvee\limits_{j=1}^n (\sum\limits_{\b_i \geq \a_j}\b_i^\vee),$$ where $\bigvee$ stands for the join operation. It is easy to see that that this join is equal to $\sum\limits_{i=1}^k\b_i^\vee$. Hence we are done.
\end{proof}

\begin{remark}\label{counter1}
We note that \cite[Example 2.12]{BBNW16} gives an example of such an element $x$ where the conclusions of \cref{perp-wt} and \cref{wt=r} fail. Namely, let $W$ be a Weyl group of type $D_4$. Then $$x=s_4s_2s_3s_1s_2s_4s_2$$ is an involution such that $\ell_R(x)=3$. The only way to write $x$ as a product of $3$ commuting reflections is $x=s_{\a_1+\a_2+\a_4}s_{\a_2+\a_3+\a_4}s_2$, and thus by \cite[Section 1.8, statement (a)]{Lusz18} we conclude that $r_x$ is the sum of coroots corresponding to these roots. Hence if $\text{wt}(x)=r_x$, we would get $\text{dp}(x)=7$ by \cref{wt-sum-dp}. However, \cite[Theorem 2.9]{BBNW16} shows that $\text{dp}(x)=6$.

Since the root systems of $E_n$ for $n=6,7,8$ all contain a subsystem isomorphic to that of $D_4$, the conclusions of \cref{perp-wt} and \cref{wt=r} fail in these cases as well. By our discussion about the computation of $r_x$, we also see that $\text{wt}(x) \neq r_x$.
\end{remark}

\begin{remark}\label{counter2}
We now give some explicit examples of elements in irreducible Weyl groups of non-simply laced type, for which the conclusions of \cref{wt=r} and \cref{wt-sum-dp} fail. 

\smallskip
(i) Let $W$ be a Weyl group of type $C_3$ and let $x$ be the reflection element in $W$ corresponding to a non-quantum root $\b=\a_1+2\a_2+\a_3$, i.e. $x=s_2s_3s_1s_2s_3s_1s_2$. We claim that 

(a) $x=s_{2\a_2+\a_3}s_1s_{2\a_2+\a_3}$ is a reduced quantum reflection decomposition of $x$. 

Indeed, this is a length additive decomposition using only quantum roots. We note that $\ell_\downarrow(x)\neq 2$ since the right hand side of \cref{eqn-wt-sum} must be an integer; since $\b$ is a non-quantum root, we cannot have $\ell_\downarrow(x)=1$. Hence, $\ell_\downarrow(x)=3>1=\ell_{\text{red}}(x)$. However, note that in this case $\wt(x)=(\a_2^\vee+\a_3^\vee)+\a_1^\vee+(\a_2^\vee+\a_3^\vee)$, which does match with $r_x=\b^\vee=\a_1^\vee+2\a_2^\vee+2\a_3^\vee$.

Now, considering $W$ to be the Weyl group associated to a root system of type $B_3$, let $x'$ be the reflection element corresponding to a non-quantum root $\a_1+\a_2+\a_3$. Direct computation shows that there is only one way to write $x'$ as a product of $3$ reflections in a length additive manner: $x'=s_1s_{\a_2+\a_3}s_1$, but this uses a non-quantum root $\a_2+\a_3$. Hence, its reduced quantum reflection decomposition is $x=s_1s_2s_3s_2s_1$, thereby giving $\ell_\downarrow(x)=5>1=\ell_{\text{red}}(x)$

\smallskip
(ii) Now, let $W$ be a Weyl group of type $B_4$ and pick the element $y=s_4s_3s_4s_2s_3s_4s_1s_2s_3s_4s_2
$. We claim that

(b) $y=s_{\a_3+2\a_4}s_2s_3s_1s_{\a_2+\a_3+2\a_4}$ is a reduced quantum reflection decomposition of $y$.

Indeed, this is a length additive decomposition using only quantum roots. Direct computation shows that there are only two ways to express $y$ as a product of $3$ reflections; namely, we have $y=s_{\a_1+\a_2+\a_3+2\a_4}s_{\a_2+2\a_3+2\a_4}s_2$ and $y=s_{\a_1+\a_2+\a_3+2\a_4}s_{\a_2+\a_3+\a_4}s_{\a_3+\a_4}$, but none of these are length additive decomposition. This rules out the possibility $\ell_\downarrow(y)=3$. Since $y$ is not a reflection, we deduce that the claim is true. In this case, $\text{wt}(y)=\a_1^\vee+2\a_2^\vee+3\a_3^\vee+2\a_4^\vee$. An easy computation about the -$1$-eigenspace of $y$ shows that $r_x=\a_1^\vee+3\a_2^\vee+3\a_3^\vee+2\a_4^\vee$.

Since the root system of type $B_4$ occurs as a subsystem of type $F_4$, this example works in that case as well.

(iii) Finally, for a Weyl group of type $G_2$ we let $z=s_2s_1s_2$. This is the reflection element of $W$ corresponding to a non-quantum root $\g=\a_1+\a_2$. We have $\ell_\downarrow(z)=3 >1=\ell_{\text{red}}(z)$. Also, here $\text{wt}(z)=\a_1^\vee+2\a_2^\vee$ and $r_z=\g^\vee=\a_1^\vee+3\a_2^\vee$.
\end{remark}
In view of the above remarks, we only need to justify that $\text{wt}(x)=r_x$ for any involution $x$ in a Weyl group of type $A_n$.

\begin{proof}[Proof of Proposition 9.1] We need to show that the properties (i)-(iii) in \cref{r_w} are satisfied by $\text{wt}$ as well. Note that (i) is trivially true. To check (ii), let us choose a decomposition $x=s_{\b_1}\cdots s_{\b_k}$ as described in \cref{perp-wt}. Then $s_\a x s_\a=s_{s_\a(\b_1)}\cdots s_{s_\a(\b_k)}$ is a decomposition of similar kind for $s_\a x s_\a$. By \cref{perp-wt}, we have $r_{s_\a x s_\a}=\sum\limits_{i=1}^k s_\a(\b_i^\vee) = s_\a (r_x)$. 

Finally, we argue that property (iii) is satisfied by $\text{wt}$ for any $x\in W$, where $W$ is any irreducible Weyl group. Suppose first that $s_\a x > x$. Then we can form a path from $s_\a x$ to $1$ by concatenating the edge $s_a x \rightharpoondown x$ with a path from $x$ to $1$ of shortest length that only uses downwards edges, whence $\a^\vee+\text{wt}(x) \geq \text{wt}(s_\a x)$. By \cref{monotone-wt}, we also have $\text{wt}(s_\a x) \geq \text{wt}(x)$. Combining these two inequalities, we get that $\text{wt}(s_\a x)$ is either $\text{wt}(x)$ or $\text{wt}(x)+\a^\vee$. Similarly, if $s_\a x<x$, we can show that $\text{wt}(s_\a x)$ is either $\text{wt}(x)$ or $\text{wt}(x)-\a^\vee$. 

\smallskip
This completes the proof.
\end{proof}

\end{document}